\documentclass{amsart}
\usepackage{hyperref}
\hypersetup{
 colorlinks=true,
 linkcolor=blue,
 filecolor=magenta, 
 urlcolor=cyan,
}
\usepackage[margin=1in]{geometry}
\usepackage[style=alphabetic,backend=biber,minalphanames=3]{biblatex}
\usepackage{silence,amsmath,etoolbox,quiver,wasysym,xcolor,xparse,amssymb,amsfonts,stmaryrd,amsthm,mathtools,comment,bm,float}
\newtheorem{theorem}{Theorem}[section]
\newtheorem{lemma}[theorem]{Lemma}
\newtheorem{proposition}[theorem]{Proposition}
\newtheorem{corollary}[theorem]{Corollary}
\newtheorem{computation}[theorem]{Computation}

\newcounter{intro}
\newtheorem{question}{Question}
\newtheorem{introthm}[intro]{Theorem}

\newtheorem{introcomp}[intro]{Computation}

\theoremstyle{definition}
\newtheorem{definition}[theorem]{Definition}
\newtheorem{example}[theorem]{Example}
\newtheorem{remark}[theorem]{Remark}

\newtheorem{construction}[theorem]{Construction}
\newtheorem{notation}[theorem]{Notation}
\usepackage{quiver}
\usepackage[disable]{todonotes}
\usepackage{blkarray}

\allowdisplaybreaks

\newcommand{\twoseg}[2]{\begin{tikzpicture}[baseline=-.7em]\node[circle,fill,scale=.4,label={below: {\scriptsize #1}}] (A) at (0,0) {};\node[circle,fill,scale=.4,label={below: {\scriptsize #2}}] (B) at (.5,0) {};\draw (A) -- (B);\end{tikzpicture}}
\newcommand{\dotseg}[1]{\begin{tikzpicture}[baseline=-.7em]\node[circle,fill,scale=.4,label={below: {\scriptsize #1}}] (A) at (0,0) {};\end{tikzpicture}}
\usepackage{tikz}
\usetikzlibrary{calc,intersections,angles}
\title{Cohomological Support Varieties of Certain Monomial Ideals}
\author{Michael Gintz}
\address{Department of Mathematics, Statistics and Computer Science, The University of Illinois Chicago, 851 S.~Morgan St., Chicago, IL 60607}
\email{mgintz2@uic.edu}
\begin{document}
\begin{abstract}
Building on work of Briggs, Grifo and Pollitz \cite{embdef}, we give an example of two cohomological support varieties of monomial ideals which are not unions of linear subspaces. We provide a procedure for the computation of the cohomological support varieties of certain other monomial ideals—including those which are equigenerated—with improved computational efficiency, leading to a computer-assisted verification of the existence of a third support variety of a monomial ideal which is not a union of linear subspaces and a computer-assisted proof of a classification of cohomological support varieties of equigenerated monomial ideals over $\mathbb{Q}$ with 6 generators.
\end{abstract}
\maketitle
\section{Introduction}
Recent work \cite{bounds,embdef} has considered the realizability of cohomological support varieties in various levels of generality. Let $Q$ be a regular local ring with residue field $k$ or polynomial ring over $k$, $R=Q/I$ be a quotient by $I=(\boldsymbol{\mathit{f}})=(f_1,\ldots,f_n)$ and $M$ be an finitely generated $R$-module. The \textit{cohomological support variety} $\operatorname{V}_R(M)$, born of the Hom complex $\operatorname{Ext}_E^\bullet(M,k)$ over a certain DG algebra $E$, is a valuable homological invariant which is intimately related to a number of properties of both $M$ and $R$ \cite{avramov,ab00supp,particle,bounds}.

The set of possible values of $\operatorname{V}_R(M)$, the subject of the \textit{realizability problem} for cohomological support varieties, has been classified when $R$ is a complete intersection \cite{bergh07} or Golod \cite{bounds}. Work in \cite{embdef} restricts attention to the case $M=R=Q/(\boldsymbol{\mathit{f}})$ and classifies $\operatorname{V}_R(R)$ for monomial ideals where $\boldsymbol{\mathit{f}}$ has at most 5 generators, provided that either $\widehat{R}=Q/I$ where $Q$ is regular local with residue field $k$ and $I$ is in the square of the maximal ideal of $Q$ or $Q$ is a graded polynomial ring with base field $k$ and $I$ is generated by forms of degree at least two, which is to say that $R$ has a \textit{minimal regular presentation}:
\begin{theorem}[{\cite[Theorem 6.14, Theorem 6.16]{embdef}}]
Let $Q/I$ be a minimal regular presentation of $R$ and let $\boldsymbol{\mathit{f}}$ be a minimal generating set of $I$ consisting of $n\leq5$ monomials. Then $\operatorname{V}_R(R)$ is either a coordinate subspace of $\mathbb{A}^5_k$ or a union of two hyperplanes.
\end{theorem}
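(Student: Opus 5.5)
The plan is to use the description of $\operatorname{V}_R(R)$ coming from the Koszul homology algebra. When $R=Q/I$ is a minimal regular presentation, $\operatorname{V}_R(R)\subseteq\mathbb{A}^n_k$ is the support of $\operatorname{Ext}_E(R,k)$ over the polynomial ring $\mathcal{S}=k[\chi_1,\ldots,\chi_n]$ of cohomology operators. This support is governed by the bilinear multiplication map $\mu\colon \operatorname{H}_1(K^R)\otimes \operatorname{H}_1(K^R)\to \operatorname{H}_2(K^R)$ on Koszul homology, with $\operatorname{H}_1\cong I/\mathfrak{m}I\cong k^n$. A point $a\in\mathbb{A}^n_k$ fails to lie in $\operatorname{V}_R(R)$ exactly when the corresponding element $f_a=\sum a_if_i$ behaves like an exact zero divisor or regular element in the appropriate sense. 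Concretely, I would use the characterization from \cite{embdef}: $a\in\operatorname{V}_R(R)$ if and only if $R$ is not a "complete intersection in direction $a$", detected by the vanishing of an obstruction built from $\mu$ and the Massey products on $\operatorname{H}(K^R)$.

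For monomial ideals the Koszul homology is $\mathbb{Z}^d$-multigraded, and $\operatorname{H}(K^R)$ is computed from the Taylor (or lcm-lattice) complex. This makes $\mu$ combinatorial: the product of the classes of $f_i$ and $f_j$ in $\operatorname{H}_2$ is nonzero precisely when the Taylor syzygy $e_{ij}$ is not a boundary in multidegree $\operatorname{lcm}(f_i,f_j)$. Equivalently, the product vanishes when $\gcd(f_i,f_j)\neq 1$ in the relevant sense, or when it is killed via a third generator dividing $\operatorname{lcm}(f_i,f_j)$. The key structural step is a lemma that $\operatorname{V}_R(R)$ is cut out by the quadrics $\chi_i\chi_j$ together with linear conditions. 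It is the union of coordinate subspaces determined by the "non-vanishing product graph" on $\{1,\ldots,n\}$, except in configurations where two multidegrees coincide. There the lcm-lattice produces a relation of the form $\chi_i\chi_j$ being proportional to $\chi_k\chi_l$, and this yields a union of two hyperplanes. I would then show that higher Massey operations contribute nothing new when $n\le 5$, by a degree count on the Taylor complex.

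Next I would run a case analysis on lcm-lattices of at most $5$ monomials, up to isomorphism and up to the relabelings that preserve $\operatorname{V}_R(R)$. Monomial ideals with the same lcm-lattice have isomorphic multigraded Koszul algebras, so only finitely many lattices need checking. Polarization further reduces the problem to squarefree ideals. In each lattice, one identifies which coordinates are forced to vanish, giving a coordinate subspace, or exhibits the unique degenerate pattern yielding two hyperplanes.

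The main obstacle is the middle step: proving that for $n\le5$ the support is controlled by $\mu$ plus at most one coincidence relation, rather than by more complicated higher-order operations. I would attempt this by bounding the length of multidegree chains in the lcm-lattice with at most $5$ atoms.
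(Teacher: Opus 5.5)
Your finite reduction rests on a false claim: ideals with isomorphic lcm-lattices need not have isomorphic Koszul homology algebras, nor equal support varieties. Take $(x_1^2,x_2^2)$ and $(x_1x_2,x_1x_3)$. Both have the Boolean lattice on two atoms as lcm-lattice and Betti numbers $1,2,1$. The first is a complete intersection, so $\operatorname{V}_R(R)=\{0\}$. For the second, $b_1'b_2'=x_1b_{\underline{12}}'$ vanishes in $T(\boldsymbol{\mathit{f}})\otimes\overline{k}$, so $d_a$ is zero away from $b_\varnothing$ and $\operatorname{V}_R(R)=\mathbb{A}^2$. The lattice records only the divisibility data $f_J=f_{J\setminus\{j\}}$, i.e.\ the intra-Taylor-subcomplex part of $d_a$. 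The inter-Taylor-subcomplex maps $a_jb_j$ exist exactly when $\gcd(f_j,f_J)=1$, and the lattice forgets this coprimality data (compare Lemma \ref{codenameprin}). Any case analysis must therefore run over this finer data, which is what the GCD graphs and Taylor graphs of \cite{embdef} encode. Polarization is harmless, since it preserves both kinds of data.

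The middle step is not only unproven but aimed at the wrong invariant. The ``complete intersection in direction $a$'' criterion is not an available tool. The working criterion is Proposition \ref{prop_supp_Chat}: $a\in\operatorname{V}_R(R)$ exactly when $d_a=\partial\otimes 1+\sum a_ib_i$ has homology on all of $T(\boldsymbol{\mathit{f}})\otimes\overline{k}$. So the support depends on the action of $\operatorname{H}_1(K^R)$ on every $\operatorname{H}_i(K^R)$, plus higher Massey-type operations, not on $\mu$ alone. For instance, the isolated vertex $\underline{23}$ of the Taylor graph in Example \ref{taylorg} is a class in $\operatorname{H}_2$ killed by $\operatorname{H}_1$, and it forces full support \cite[Lemma 6.9]{embdef}; that is a statement about $\operatorname{H}_1\otimes\operatorname{H}_2\to\operatorname{H}_3$, invisible to $\mu$.

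Nor can a degree count dispose of higher operations when $n\le 5$. A triple Massey product of classes in $\operatorname{H}_1$ lands in $\operatorname{H}_4$, which exists for $n=5$. In the $n=6$ computation of Section \ref{edge}, the cubic $a_{\underline{135}}+a_{\underline{246}}$ arises precisely from such a zigzag (push down, pull back, push down) ending at sets of size $4$. Whether such contributions occur for $n\le5$ is exactly what a combinatorial case analysis must settle.

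Your mechanism for the exceptional case is also off. A relation $\chi_i\chi_j=c\,\chi_k\chi_l$ with distinct indices cuts out an irreducible quadric cone, not two hyperplanes. The exceptional variety in the theorem is $\mathcal{V}(\chi_1\chi_5)$, a union of two coordinate hyperplanes.

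For comparison, the paper only quotes this theorem. In \cite{embdef} it is obtained by analysing $d_a$ on the Taylor resolution via GCD and Taylor graphs. Full-support criteria, such as isolated Taylor-graph vertices or a dominating vertex in the GCD graph, discard most configurations, and the remaining ones are computed directly.
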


\noindent Using a Macaulay2 calculation, they find a cohomological support variety of a monomial ideal which is not a union of linear subspaces which is realizable when $n=6$ \cite[Example 6.17]{embdef}. In doing so, they employ a procedure which can be used to compute the cohomological support variety of an arbitrary monomial ideal. However, this procedure involves the computation of the homology of a square matrix of dimension equal to the sum of the Betti numbers of the ideal (which, for example, will be $2^n$ whenever for each generator there is some variable such that the power of that variable is largest at that generator \cite{mintaylor}), making it relatively manually intractable and potentially computationally expensive. For certain monomial ideals, including those which are equigenerated (that is, whose minimal generating sets consist of monomials of the same degree, that is, they are \textit{equidegree}), we offer a more efficient procedure. This procedure considers a $2^n$-dimensional space, but rather than calculating the homology of an arbitrary automorphism, allows us to partition the space into grades and consider an automorphism which is a differential of a chain complex, which allows for the computation of the homologies of smaller matrices:
\begin{introthm}[Corollary \ref{theoremareal}]\label{thma}
Every cohomological support variety of a ring with a minimal regular presentation given by an equigenerated monomial ideal with $n$ generators is the set of points in $\mathbb{A}_k^n$ such that a chain complex of vector spaces with total dimension $2^n$ with entries defined by polynomials in the $n$ variables begetting our affine space has non-trivial homology.
\end{introthm}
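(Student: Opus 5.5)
The plan is to start from the description of $\operatorname{V}_R(R)$ in \cite{embdef}. There, for $R=Q/I$ with $I=(\boldsymbol{\mathit{f}})$ a minimal regular presentation, $\operatorname{V}_R(R)$ is the set of $a=(a_1,\ldots,a_n)\in\mathbb{A}^n_k$ for which a certain perturbed complex fails to be acyclic. Concretely, let $F\to R$ be the minimal free resolution over $Q$, carrying a (homotopy) action of the Koszul-type operators $\sigma_i$ coming from the generators $f_i$. For each $a$ we form the operator $\partial\otimes k+\sum_i a_i\sigma_i$ on $F\otimes_Q k$. The point $a$ lies in $\operatorname{V}_R(R)$ exactly when this square matrix, of size $\sum\beta_i$, has nonzero homology. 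This is the "procedure" of \cite[Example 6.17]{embdef}, and I take it as the starting point.

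\textbf{Step 1: replace $F$ by the Taylor resolution.} For monomial ideals, the Taylor complex $T$ is a free resolution of $R$ of rank $2^n$, with basis $e_S$ for $S\subseteq\{1,\ldots,n\}$. The support variety is independent of the choice of resolution, so we may replace $F$ by $T$. On $T$ the system of higher homotopies is explicit: $\sigma_i(e_S)=\pm\frac{m_{S\cup i}}{m_S}e_{S\cup i}$ for $i\notin S$, where $m_S=\operatorname{lcm}(f_j:j\in S)$. I will check that these satisfy the required identities, namely $\sigma_i^2=0$, anticommutation, and $[\partial,\sigma_i]=f_i$.

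\textbf{Step 2: use the equidegree hypothesis to regrade.} Suppose every $f_i$ has degree $d$. Assign $e_S$ the weight $w(S)=\deg m_S-d|S|$. After tensoring with $k$, the differential $\partial$ keeps only the terms with $m_S=m_{S\setminus j}$, while $\sigma_i$ keeps only the terms with $m_{S\cup i}=m_S$. A monomial ratio specialises to a nonzero scalar exactly when it equals $1$, and we evaluate the polynomial entries at the point $a$. The key claim is that in the combined operator $D_a=\bar\partial+\sum a_i\bar\sigma_i$, every surviving term shifts a single suitable grading by exactly one. For instance, grade by $g(S)=\deg m_S-(d-1)|S|$, or by a similar combination chosen so that removing an element with the lcm unchanged and adding an element with the lcm unchanged both move $g$ by the same amount $\pm1$. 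It may instead be necessary to grade by $\deg m_S$ together with the parity of $|S|$; finding the right grading is the main obstacle. Once it is found, $D_a$ is a differential on a $\mathbb{Z}$-graded vector space of total dimension $2^n$. Since $D_a^2=\sum a_i f_i\otimes k=0$, this is a genuine chain complex whose entries are polynomials in the $a_i$ (in fact linear in them, and constant from $\bar\partial$).

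\textbf{Step 3: conclude.} Nonvanishing homology of the square operator $D_a$ on the total space is equivalent to nonvanishing homology of the graded complex, because homology is computed degreewise. Therefore $\operatorname{V}_R(R)$ is exactly the locus where this $2^n$-dimensional complex has nontrivial homology, which is the statement of Theorem \ref{thma}.

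I expect the main difficulty to be Step 2: verifying that, under the equidegree hypothesis, the surviving terms of both $\bar\partial$ and $\bar\sigma_i$ are homogeneous of the same degree for one consistent grading. If the grading cannot be made uniform, I would instead use the filtration by $\deg m_S$ and argue by a spectral sequence that the homology is detected on the associated graded complex.
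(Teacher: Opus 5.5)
There is a genuine gap. It surfaces where you expected, in Step 2, but it originates in Step 1.

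\textbf{The action in Step 1 is wrong.} Your formula $\sigma_i(e_S)=\pm\frac{m_{S\cup i}}{m_S}e_{S\cup i}$ is not the $E$-action on the Taylor resolution. Already for $n=1$ it gives $\partial\sigma_1(e_\varnothing)=f_1^2e_\varnothing$, so $[\partial,\sigma_1]=f_1$ fails. The correct action is multiplication by $b_i$ in the Taylor DG algebra, $e_i\cdot e_S=\pm\frac{f_i m_S}{m_{S\cup i}}e_{S\cup i}$. Its coefficient survives in $T\otimes k$ exactly when $m_{S\cup i}=f_i m_S$, i.e.\ when $\gcd(f_i,m_S)=1$, and not when $m_{S\cup i}=m_S$.

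\textbf{Consequences for Step 2.} With your survival rule, every surviving term $e_S\mapsto e_{S\cup i}$ of $\bar\sigma_i$ is paired with a surviving term $e_{S\cup i}\mapsto e_S$ of $\bar\partial$. No grading can drop by one along both, so the argument as written cannot be completed. Your explicit candidate $g(S)=\deg m_S-(d-1)|S|$ also fails even with the correct rule: it shifts by $d-1$ along $\bar\partial$ and by $1$ along $\bar\sigma_i$, so it works only when $d=2$. The spectral-sequence fallback does not help either, since it does not produce the single $2^n$-dimensional chain complex the statement asserts.

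\textbf{The missing idea is the right grading,} which is what Lemmas \ref{finalwg} and \ref{220} supply. The surviving terms of $D_a$ come in two kinds:
\begin{itemize}
\item terms of $\bar\partial$, which delete one element of $S$ and leave $m_S$ unchanged;
\item terms of $a_i\bar\sigma_i$, which add one element to $S$ and replace $m_S$ by $f_i m_S$, so raise $\deg m_S$ by exactly $\deg f_i=d$.
\end{itemize}
Hence $g(S)=|S|-2\lfloor \deg m_S/d\rfloor$ decreases by exactly one along every surviving term. This is the paper's grade $|J|+2\Sigma_{T_J}$ with $\Sigma_{T_J}=-\lfloor \deg f_J/\deg f_1\rfloor$. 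The equidegree hypothesis is used precisely to make the jump in $\deg m_S$ the same for every $i$. With this grading, your Step 3 goes through unchanged via Corollary \ref{ceafp}, and this is exactly the paper's route through Theorem \ref{threefive}.
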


This consequently allows us to theoretically verify their computational discovery and extend it to a second such variety:

\begin{introthm}[{cf. \cite[Example 6.17]{embdef}}]\label{617}
If
\[R=k[x_1,\ldots,x_6]/(x_1x_2,x_2x_3,x_3x_4,x_4x_5,x_5x_6,x_6x_1),\]
\noindent then
\[
\operatorname{V}_R(R)=\mathcal{V}(a_1a_3a_5+a_2a_4a_6)\subseteq\mathbb{A}_k^6.
\]
Furthermore, if $\operatorname{char}(k)\not\in\{2,5\}$ and
\[R=k[x_1,\ldots,x_{10}]/(x_1x_2,x_2x_3,\ldots,x_{10}x_1),\]
\noindent then
\[
\operatorname{V}_R(R)=\mathcal{V}(a_1a_3a_5a_7a_9+a_2a_4a_6a_8a_{10})\subseteq\mathbb{A}_k^{10}.
\]
\end{introthm}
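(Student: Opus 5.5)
Both rings are quotients by the edge ideal of an even cycle $C_{2m}$ ($m=3,5$), so they are equigenerated of degree two. The plan is to apply the procedure behind Theorem \ref{thma} (Corollary \ref{theoremareal}). This gives a complex of $k[a_1,\ldots,a_n]$-vector spaces of total dimension $2^n$, graded by subsets $S\subseteq\{1,\ldots,n\}$ as in the Taylor resolution. The support variety $\operatorname{V}_R(R)$ is then the locus of points $a\in\mathbb{A}^n_k$ where this specialized complex fails to be exact.

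The first step is to understand the differential. The unspecialized Taylor differential is unchanged, and it is perturbed by the terms $\sum a_i\,\partial/\partial e_i$ coming from the Koszul/DG structure. The nonzero structure constants come from products of Taylor basis elements with $\operatorname{lcm}(m_S)\operatorname{lcm}(m_T)=\operatorname{lcm}(m_{S\cup T})\cdot f_i$. For a cycle, the lcm of a set of edges is the product of the vertices they cover. So the entries depend only on the combinatorics of which vertices are covered, and the grading is by the multidegree in $\{0,1\}^{n}$ (squarefree, since degrees are at most one per vertex after the quotient). I will split the $2^n$-dimensional space into blocks indexed by the covered-vertex set $W\subseteq V(C_n)$. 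Only edge sets whose union of vertices relates to $W$ contribute, so the complex decomposes into a direct sum of small complexes, one per $W$ (or per orbit of $W$ under the dihedral symmetry).

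The second step is to check blocks. For a proper $W$, the induced subgraph is a disjoint union of paths. For paths, I expect the small complex to be a tensor product of Koszul-type pieces whose acyclicity fails exactly on a coordinate hyperplane $a_i=0$ together with an exact piece. Comparing with the known $n\le5$ behaviour of path ideals, I hope these blocks are exact everywhere except on loci contained in $\mathcal{V}(a_1a_3\cdots+a_2a_4\cdots)$ or on loci absorbed by the top block. The decisive block is $W=V(C_n)$. There, the edge sets covering all vertices include the two perfect matchings $\{1,3,5,\ldots\}$ and $\{2,4,\ldots\}$. The composite of the perturbation maps around the cycle should yield a determinant equal to $\prod_{i\text{ odd}}a_i+\prod_{i\text{ even}}a_i$, up to sign. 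The sign is $+$ because $m$ is odd, and it comes from Koszul signs when commuting the $e_i$. I will compute this block's homology by Gaussian elimination over $k[a]$, with rows and columns ordered by $|S|$. This should show its homology is nonzero precisely on that hypersurface, by an exact-on-a-dense-open plus rank-drop argument.

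The main obstacle is the $n=10$ case. The full cover block there is large, and elimination requires inverting certain integers. The hypothesis $\operatorname{char}k\notin\{2,5\}$ suggests that counts of covering edge sets, or minors with entries like $2$ and $5$ (e.g., $n/2=5$), appear as pivots. I would first do $n=6$ fully by hand, using the dihedral symmetry to reduce the number of distinct blocks. I would then identify the pivots as explicit integers for $n=10$ to locate the characteristic restriction. If the hand elimination is too unwieldy, I would fall back on a computer check of the ranks of each block over $\mathbb{Z}[a]$, reduced mod primes, to confirm that the only bad primes are $2$ and $5$.
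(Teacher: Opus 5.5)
Your plan rests on the claim that the specialized complex splits as a direct sum of small complexes, one for each covered-vertex set $W$. This is false. Only the Taylor part of $d_a=\partial^{T}\otimes 1+\sum_j e_j\otimes a_j$ preserves the lcm, hence $W$. The perturbation is multiplication by $b_j$, not a contraction $\partial/\partial e_j$. It is nonzero on $b_K$ exactly when $j\notin K$ and $f_jf_K=f_{\{j\}\cup K}$, and then it carries the $W$-block into the $(W\sqcup\{j,j+1\})$-block. So the $W$-blocks are pieces of a filtration, not summands. The only genuine splitting is by connected components of the Taylor subcomplex graph (Corollary \ref{components}), and for these cycles there are just two, the parity classes of $|W|$.

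Moreover, each $W$-block by itself has no $a$-dependence at all. Up to reindexing and signs, it is the reduced cochain complex of $\Delta_W$, the complex of subsets of $M_W$ with no two (cyclically) adjacent indices. For instance, $W=\varnothing$ gives $\overline{k}\,b_\varnothing$ with zero differential; as a direct summand it would give homology at every $a$, hence full support. Your ``decisive'' block $W=[6]$ has homology $\widetilde{H}^1(\Delta_{[6]})\cong\overline{k}^2$ for every $a$, so no elimination inside it can produce $a_1a_3a_5+a_2a_4a_6$. Your expectation that path blocks fail to be exact on coordinate hyperplanes also contradicts the target: $\{a_1=0\}\not\subseteq\mathcal{V}(a_1a_3a_5+a_2a_4a_6)$, and nothing can be ``absorbed'' across direct summands.

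The missing idea is to use the inter-block maps as the second direction of a double complex. Because the ideal is equigenerated, $\Sigma_{T_W}=-\lfloor\deg f_W/2\rfloor$ is a weak grading (Lemma \ref{220}). By Theorem \ref{doublecomplex}, the complex is then the totalization of a double complex whose rows are direct sums of $W$-blocks with $|W|$ fixed, and whose vertical maps are the $a_je_j$. Taking row homology first, the $E_1$ page is $\bigoplus_W\widetilde{H}^*(\Delta_W)$, known from Kozlov (Lemmas \ref{godhom} and \ref{godhomtwo}). The hypersurface appears only in later differentials.

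\begin{itemize}
\item For $n=6$, consider the zig-zag: apply $\sum a_je_j$, lift through the Taylor differential, apply again. Going from the two-dimensional $E_2$ term in the $|W|=2$ row to $\widetilde{H}^1(\Delta_{[6]})\cong\overline{k}^2$, it is multiplication by $a_{\underline{135}}+a_{\underline{246}}$. On the odd component, the $6\times 6$ vertical $E_1$ map splits into two $3\times 3$ blocks, each of determinant $a_{\underline{135}}+a_{\underline{246}}$.
\item For $n=10$, a longer zig-zag yields a cocycle in $\widetilde{H}^2(\Delta_{[10]})$ whose pairing with an explicit cycle is $10(a_{\underline{13579}}+a_{\underline{2468A}})$. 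That, not a pivot inside one block, is where $\operatorname{char}k\notin\{2,5\}$ enters.
\end{itemize}

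You also need the reverse inclusion separately. The paper obtains $\mathcal{V}(a_{\underline{13\ldots(4m+1)}}+a_{\underline{24\ldots(4m+2)}})\subseteq\operatorname{V}_R(R)$ for every $(4m+2)$-cycle from an explicit cyclic $(4m+2)\times(4m+2)$ submatrix of $d_a$. Its $\pm1$ Taylor entries and $a_i$ perturbation entries again connect different $W$-blocks. Finally, a computer rank check modulo finitely many primes would not by itself establish the result in all characteristics other than $2$ and $5$.
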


\noindent It also allows us to write a method in Macaulay2 which computes cohomological support varieties of monomial ideals over $\mathbb{Q}$. This allows us to make two additional statements, albeit verified only with computer assistance. First, we are able to give another example of a previously-unknown realizable variety, given by the edge ideal of a 14-cycle over $\mathbb{Q}$:
\begin{introcomp}[Computation \ref{code1}]\label{code11}
The edge ideal of a 14-cycle over $\mathbb{Q}$ has support variety
\[\mathcal{V}(a_1a_3\cdots a_{13}+a_2a_4\cdots a_{14}).\]
\end{introcomp}
\noindent The second is a partial generalization of Theorem \ref{onefour} to the case $n=6$, under the condition that we only consider equigenerated monomials and only over $\mathbb{Q}$:
\begin{introcomp}[Computation \ref{code2}]\label{code21}
The cohomological support varieties of rings over $\mathbb{Q}$ with minimal regular presentations given by ideals with equigenerated, 6-element minimal generating sets are all one of the following up to order:
\begin{itemize}
\item a linear subspace,
\item a union of two hyperplanes,
\item $\mathcal{V}(a_{135}+a_{246})$.
\end{itemize}
\end{introcomp}

In Section \ref{bkgd}, we provide a synopsis of the relevant background information on cohomological support varieties and their calculation in the general and monomial settings, introducing also for the sake of resolving monomial ideals the Taylor resolution. In Section \ref{otimesk}, we will extrapolate on an existing construction of the cohomological support variety in the monomial setting, leveraging the fact that our resulting resolution is over $\overline{k}$ and exploiting the relationship between the Taylor resolution and cellular cochain complexes of certain simplicial complexes, whence Theorem \ref{thma} is derived. In Section \ref{edge} we use this information to manually verify Theorem \ref{617}. In Section \ref{compimp} we describe our computational verification of Computations \ref{code11} and \ref{code21}. We conclude with Section \ref{bgpfuture}, a short note on future work.

\section{Background}\label{bkgd}

We provide a synopsis of the construction of cohomological support varieties, including the relevant topics surrounding DG algebra and module resolutions, Koszul resolutions, and universal resolutions. This section draws predominantly from \cite{particle,embdef}. The interested reader is encouraged to consult \cite{avramov} for more background on DG algebras and resolutions and \cite{pollitz21} for Koszul resolutions, universal resolutions, and the resulting constructions of the cohomological support variety.

\subsection{Preliminary notions}
Let $Q$ be a commutative ring and let $R=Q/I$ where $I=(f_1,\ldots,f_n)$. $E$ is the DG algebra
\[Q[e_i|\partial e_i=f_i],\]
that is, it is the Koszul complex of $(\boldsymbol{\mathit{f}})$ endowed with an exterior algebra structure. In particular, note that these $e_i$ are anti-commutative. 

Let $A$ be a DG $Q$-algebra and $B$ be a DG $A$-module. $A^\natural$ is the underlying graded $Q$-algebra of $A$ and $B^\natural$ is the underlying graded $A^\natural$-module of $B$. Furthermore, the \textit{suspension} $\mathsf{\Sigma}^iB$ of a DG $A$-module $B$ for some integer $i$ is given by
\begin{align*}
(\mathsf{\Sigma}^iB)_n=B_{n-i},\quad\partial^{\mathsf{\Sigma}^iB}=(-1)^i\partial^B\quad\text{and}\quad a\cdot \mathsf{\Sigma}^ib=(-1)^{|a|i} \mathsf{\Sigma}^i(ab).
\end{align*}
We define the suspension of a chain complex which is not a DG module by imposing only the first two of these three conditions.

A DG $A$-module $P$ is \textit{semiprojective} if for every morphism of DG $A$-modules $\alpha:P\to N$ and each surjective quasi-isomorphism of DG A-modules $\gamma:M\to N$ there exists a unique-up-to-homotopy morphism $\beta:P\to M$ of DG $A$-modules such that $\alpha=\gamma\beta$. A DG $A$-module $F$ is \textit{semifree} if $F^\natural$ is a free $A^\natural$-module. A \textit{semiprojective (resp. semifree) resolution} is a quasi-isomorphism $P\to M$ of DG $A$-modules such that $P$ is semiprojective (resp. semifree). Considering $E$ in particular, a \textit{Koszul resolution} of a DG $E$-module $M$ is a map of DG $E$-modules $P\to M$ which, as a map of DG $Q$-modules via restriction of scalars $Q\to E$, is a semiprojective resolution.

Let $M,N$ be DG $A$-modules. $\operatorname{Ext}_A(M,N)$ is the graded $Q$-module with components
\[
\operatorname{Ext}_A^i(M,N)=\operatorname{H}^i(\operatorname{Hom}_A(F,N)),
\]
where $F\xrightarrow{\simeq}M$ is a semifree resolution of $M$ over $A$. Since elements of $\operatorname{Ext}_A^i(M,N)$ are homology classes of $A$-linear chain maps $\alpha:F\to \mathsf{\Sigma}^iN$, we denote them $[\alpha]$. As any two semifree resolutions are unique up to homotopy equivalence and as $\operatorname{Hom}_A(F,-)$ preserves surjections and quasi-isomorphisms, $\operatorname{Ext}_A(M,N)$ is independent of $F$. We concern ourselves with the composition pairing
\[
\operatorname{Ext}_A(M,N)\otimes_Q\operatorname{Ext}_A(L,M)\to \operatorname{Ext}_A(L,N)\quad\text{given by }([\alpha],[\beta])\mapsto  \left[\mathsf{\Sigma}^{|\beta|}\alpha\circ \tilde{\beta}\right]
\] 
where $L$, $M$, and $N$ are DG $A$-modules and $\tilde{\beta}$ satisfies the commutative diagram
\begin{center}
    \begin{tikzcd}
        & \mathsf{\Sigma}^{|\beta|}G\ar[d,"\simeq"]\\
        F\ar[r,swap,"\beta"]\ar[ur,"\tilde{\beta}"]& \mathsf{\Sigma}^{|\beta|}M 
    \end{tikzcd}
\end{center}
with  $F\xrightarrow{\simeq}L$ and $G\xrightarrow{\simeq} M$ semifree resolutions over $A$. This pairing endows $\operatorname{Ext}_A(M,M)$ with a graded $Q$-algebra structure, and each $\operatorname{Ext}_A(M,N)$ with a graded $\operatorname{Ext}_A(N,N)$-$\operatorname{Ext}_A(M,M)$ bimodule structure.

\subsection{Cohomological support varieties}
Henceforth in this work, we will consider the two following cases simultaneously:
\begin{itemize}
\item $\widehat{R}=Q/I$ with $Q$ a regular local ring and $I$ in the square of the maximal ideal of $Q$ (the \textit{local case}),
\item $R = Q/I$ where $Q$ is a positively graded polynomial algebra over a field and $I$ an ideal generated by homogeneous forms of degree at least 2 (the \textit{graded case}).
\end{itemize}
\noindent In such cases we will call $Q/I$ a \textit{minimal regular presentation} of $R$. Note that in either case we have a residue (respectively, base) field $k$. A cohomological support variety is a homological invariant defined in \cite{vpd} which can be found in many forms \cite{avramov,ab00, ab00supp, ag02} and has been used to prove various homological properties \cite{avramov,ab00supp,nonproxy,bounds,particle,ste14}. It concerns itself with the graded $\operatorname{Ext}_E(k,k)$-module $\operatorname{Ext}_E(M,k)$. More specifically, it is concerned with $\operatorname{Ext}_E(M,k)$ as a module over $\mathcal{S}=k[\chi_1,\ldots,\chi_n]$ (where each $\chi_i$ has homological degree $-2$) which has a natural inclusion into $\operatorname{Ext}_E(k,k)$ \cite[Section 2]{ab00}. In these cases, we have
\[\operatorname{Ext}_E(M,k)\cong\operatorname{H}\left(\mathcal{C}_E(F)\right)\]
where
\begin{equation}\label{preC}
\mathcal{C}_E(F)^\natural:= \mathcal{S}\otimes_k \operatorname{Hom}_Q(F,k)\quad\text{with}\quad\partial^{\mathcal{C}_E(F)}=1\otimes \partial^{\operatorname{Hom}_Q(F,k)}+\sum_{i=1}^n \chi_i\otimes e_i \,.
\end{equation}
\begin{definition}
The \textit{cohomological support variety} $\operatorname{V}_R(M)$ of a module $M$ is the support of the projectivization  of the $\mathcal{S}$-module $\operatorname{Ext}_E(M,k)$:
\[\operatorname{V}_R(M):=\textit{Supp}_{\mathcal{S}}^+\left(\operatorname{Ext}_E(M,k)\right).\]
\end{definition}
We will also use $\operatorname{V}_R(M)$ to refer to the support of $\operatorname{Ext}_E(M,k)\otimes_k\overline{k}$, where $\overline{k}$ is an algebraic closure of $k$ which we fix henceforth—these are determinable from each other by the Nullstellensatz. It is also worth noting that by \cite[\nopp 1.2.2]{embdef}, if $\operatorname{H}(M)$ is finitely generated over $R$, there exists some such $F$ which is a bounded complex of free $Q$-modules, implying that $\mathcal{C}_E(F)$ is a finite rank free graded $\mathcal{S}$-module, and furthermore that $\operatorname{Ext}_E(M,k)$ is finitely generated over $\mathcal{S}$.
\begin{remark}
Say that $M$, $N$ are $R$-modules, which may be considered as DG $E$-modules by considering their multiplication by our $e_i$ terms to be zero. If one replaces $\operatorname{Hom}_Q(F,k)$ with $\operatorname{Hom}_Q(F,N)$ in (\ref{preC}), we get an expression of $\operatorname{Ext}_E(M,N)$ which can be found in \cite{ag02}. If $R$ is a complete intersection, then since $E\to R$ is a quasi-isomorphism this is isomorphic to $\operatorname{Ext}_R(M,N)$. This is how Macaulay2 \cite{M2} calculates \texttt{Ext(M,N)}. A more general version of (\ref{preC}) where $M$ and $N$ can be any DG $E$-module can be found in \cite{pollitz21}.
\end{remark}

Recent work has endeavored to investigate how much information about $M$ and $R$ is encoded in the cohomological support variety. This comprises two questions:
\begin{itemize}
\item What values can $\operatorname{V}_R(M)$ take on? (This is known as the \textit{realizability question} for cohomological support varieties)
\item What do those values tell us about $M$ and $R$?
\end{itemize}
\begin{theorem}[{\cite[Theorem B]{pollitz21}}]
If $Q$ is a regular local ring, $R$ is a complete intersection if and only if $\operatorname{V}_R(R)=\varnothing$.
\end{theorem}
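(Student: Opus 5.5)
The plan is to translate emptiness of $\operatorname{V}_R(R)$ into a finiteness statement about $\operatorname{Ext}_E(R,k)$. Gulliksen's characterization of complete intersections by polynomial growth of the Betti numbers of $k$ would then give the converse direction. First reduce to the case where $Q$ is complete and $I\subseteq\mathfrak m_Q^2$, so that $Q/I$ is a minimal regular presentation. Completion is faithfully flat and commutes with the Koszul complex $E$ and with $\operatorname{Ext}_E(-,k)$. If some $f_i\notin\mathfrak m_Q^2$, it is part of a regular system of parameters, and I expect one can pass to $Q/(f_i)$, which is still regular. One then checks that the variety and the complete intersection property are both insensitive to this step; this bookkeeping is routine but needs care.

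For the forward direction, suppose $R$ is a complete intersection. In a minimal presentation, $f_1,\ldots,f_n$ is then a $Q$-regular sequence, so $E\to R$ is a quasi-isomorphism. Hence $\operatorname{Ext}_E(R,k)\cong\operatorname{Ext}_R(R,k)=k$ is concentrated in degree $0$. It is annihilated by $(\chi_1,\ldots,\chi_n)$, so its projective support is empty.

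For the converse, suppose $\operatorname{Supp}^+_{\mathcal S}\operatorname{Ext}_E(R,k)=\varnothing$. Since $\operatorname{Ext}_E(R,k)$ is finitely generated over $\mathcal S$ (by the remark after (\ref{preC})), it is then killed by a power of $(\boldsymbol\chi)$ and so has finite total $k$-dimension. Dually, $X:=R\otimes^{\mathsf L}_E k$ is a DG module with finite-dimensional total homology, and it is nonzero since $\operatorname{H}_0(X)=k$. Using the augmentation $E\to R\to k$ and associativity of derived tensor products, we get
\[
k\otimes^{\mathsf L}_E k\simeq k\otimes^{\mathsf L}_R\bigl(R\otimes^{\mathsf L}_E k\bigr)=k\otimes^{\mathsf L}_R X .
\]
The left side is computable because $E$ is a Koszul complex over a regular ring. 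Its Poincaré series is $(1+t)^{\dim Q}/(1-t^2)^n$, so $\dim_k\operatorname{Tor}^E_i(k,k)$ grows polynomially in $i$.

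The main obstacle is the comparison step: bounding the Betti numbers of $k$ over $R$ by those of $k\otimes^{\mathsf L}_R X$ for a bounded $X$ with nonzero, finite-length homology. I would take a minimal semifree resolution of $X$ over $R$. If $X$ has lowest homology in degree $s$, the top-degree homology $\operatorname{H}_s(X)$ surjects onto $k$. A filtration argument on the homology of $X$ (a finite composition series by copies of $k$) should then show that $\beta^R_i(k)$ is at most a finite sum of shifted $\dim\operatorname{Tor}^R_{j}(k,X)$. Alternatively, one can argue that $k\in\operatorname{thick}_R(X)$ up to growth. If this is done carefully, $P^R_k(t)$ has polynomial growth, and Gulliksen's theorem gives that $R$ is a complete intersection.

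If the comparison proves delicate, I would instead use the standard fact that finite-dimensional $\operatorname{Ext}_E(R,k)$ means $R$ is finitely built from $E$ in the derived category of $E$. Then $\operatorname{Tor}^R(k,k)$ is finitely built from $\operatorname{Tor}^E(k,k)$-type pieces, which yields the same growth bound.
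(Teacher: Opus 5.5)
\emph{Context:} the paper only quotes this result, so there is no internal proof to compare against. Judged on its own merits, your proposal has a genuine gap in the converse direction.

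\textbf{What is correct.} Your forward direction is correct. So are the opening moves of the converse:
\begin{itemize}
\item Finite generation plus empty projective support gives $\dim_k\operatorname{Ext}_E(R,k)<\infty$.
\item Hence $X=R\otimes^{\mathsf L}_E k$ has finite, nonzero total homology with $\operatorname{H}_0(X)=k$.
\item $k\otimes^{\mathsf L}_E k\simeq k\otimes^{\mathsf L}_R X$ has Poincar\'e series $(1+t)^{\dim Q}/(1-t^2)^n$.
\end{itemize}
The reduction step is harmless but unnecessary: the statement already lives in the minimal-presentation setting, with $\boldsymbol{\mathit{f}}$ a minimal generating set, which your forward direction needs.

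\textbf{The comparison step is false as formulated.} This is not a matter of care. Run the same argument with $E$ replaced by $Q$. Then $X'=R\otimes^{\mathsf L}_Q k$ is the Koszul complex $K^R$ on a minimal generating set of $\mathfrak m_R$, and it has every property you use:
\begin{itemize}
\item it is bounded, with nonzero finite-length homology and $\operatorname{H}_0(X')=k$;
\item $k\otimes^{\mathsf L}_R X'\simeq k\otimes^{\mathsf L}_Q k$ has total dimension $2^{\dim Q}$.
\end{itemize}
Yet $\beta^R_i(k)$ is eventually zero only when $R$ is regular, and it equals $2^i$ for $R=k[\![x,y]\!]/(x,y)^2$. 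So no argument using only these properties of $X$ can bound $\beta^R(k)$.

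The filtration idea runs the wrong way. A composition series of $\operatorname{H}(X)$ shows $X\in\operatorname{thick}_R(k)$, which bounds $\beta^R(X)$ by $\beta^R(k)$, not conversely. Likewise $k\in\operatorname{thick}_R(X')$ fails for singular $R$, since $X'$ is perfect.

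Your fallback has the same hole. $R$ is always finitely built from $Q$ in $\mathsf D(Q)$, and $\operatorname{P}^Q_k(t)=(1+t)^{\dim Q}$. So ``finitely built from $E$'' plus polynomial growth of $\operatorname{Tor}^E(k,k)$ cannot by itself force polynomial growth of $\operatorname{Tor}^R(k,k)$.

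\textbf{What is missing.} The missing input must use that $E$ is the Koszul complex on generators of $I$. For minimal $\boldsymbol{\mathit{f}}$, your $X$ is exactly the second stage of Tate's resolution of $k$ over $R$: the Koszul complex with degree-$2$ divided power variables adjoined to kill its $\operatorname{H}_1$. Moreover, $R$ is a complete intersection iff this DG algebra already has homology $k$. So ``bounded homology of $X$ forces polynomial growth of $\beta^R(k)$'' is essentially the theorem itself.

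\textbf{One way to close the gap.} Keep the first half of your fallback, $R\in\operatorname{thick}_{\mathsf D(E)}(E)$, and proceed as follows.
\begin{enumerate}
\item Localize at a prime $\mathfrak p$ of $Q$ minimal over $I$.
\item Base change along $E_{\mathfrak p}\to\Lambda:=E_{\mathfrak p}\otimes_{Q_{\mathfrak p}}\kappa(\mathfrak p)$. This is an exterior algebra on $e_1,\ldots,e_n$ with zero differential, since each $f_i\in\mathfrak p$. Then $N=R_{\mathfrak p}\otimes^{\mathsf L}_{Q_{\mathfrak p}}\kappa(\mathfrak p)$ is a perfect DG $\Lambda$-module.
\item By the analogue of (\ref{preC}), $\operatorname{Ext}_\Lambda(N,\kappa(\mathfrak p))$ is the homology of $\mathcal C=\mathcal S\otimes N^\vee$. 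It is finite-dimensional and nonzero, and $\mathcal C\otimes_{\mathcal S}\kappa(\mathfrak p)=N^\vee$.
\item Filter $\mathcal C\otimes_{\mathcal S}K^{\mathcal S}(\boldsymbol\chi)$ by Koszul degree. Its homology survives in degrees $\sup\operatorname{H}(\mathcal C)$ and $\inf\operatorname{H}(\mathcal C)-n$, so $\operatorname{amp}\operatorname{H}(N)\ge n$.
\item On the other hand, $\operatorname{amp}\operatorname{H}(N)=\operatorname{pd}_{Q_{\mathfrak p}}R_{\mathfrak p}=\operatorname{ht}\mathfrak p\le n$, by Auslander--Buchsbaum and Krull's height theorem.
\end{enumerate}
Hence every minimal prime of $I$ has height exactly $n$. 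So $\boldsymbol{\mathit{f}}$ is a regular sequence in the Cohen--Macaulay ring $Q$, and $R$ is a complete intersection. Note that it is the Krull step, which needs $I=(\boldsymbol{\mathit{f}})$, that your argument never uses.
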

\noindent Let $\operatorname{P}_k^R(t)$ denote the Poincar\'{e} series of $M$ over $R$. By, for example, \cite[Proposition 3.3.2]{avramov}, every coefficient of the Poincar\'{e} series $\operatorname{P}_k^R(t)$ is bounded above by the corresponding coefficient of the power series
\[\frac{(1+t)^{\operatorname{edim}R}}{1-\sum_{j=1}^{\operatorname{codepth}R}\operatorname{rank}_kH_j(K^R)t^{j+1}},\]
where edim is the embedding dimension and $K^R$ is the Koszul complex on a minimal generating set of $\mathfrak{m}R$. We say $R$ is \textit{Golod} if this equality holds at each entry for $M=k$.
\begin{theorem}[{\cite[Theorem D]{bounds}}]
Let R be a Golod local ring. For any bounded complex $M$ of finitely generated $R$-modules with $\operatorname{H}(M)=0$, expressed as an $E$-module by assigning to each $e_i$ the trivial action, the cohomological support variety $\operatorname{V}_R(M)$ is either all of $\mathbb{A}_k^n$ or a (conical) hypersurface, and every hypersurface is a cohomological support variety of some such complex. Furthermore, if $R$ is not a hypersurface ring, then $\operatorname{V}_R(R)=\mathbb{A}_k^n$.
\end{theorem}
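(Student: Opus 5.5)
Since this is \cite[Theorem D]{bounds}, the plan is to reconstruct its argument. Throughout I read the hypothesis as $\operatorname{H}(M)\neq 0$ with $\operatorname{H}(M)$ finitely generated; with $\operatorname{H}(M)=0$ the support is empty and the dichotomy fails. The argument works pointwise on $\mathbb{A}_{\overline{k}}^n$ and uses the graded case with $\overline{k}$-coefficients, which is harmless by the Nullstellensatz remark.

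\textbf{Step 1: a pointwise criterion.} For $a=(a_1,\ldots,a_n)\neq 0$, specialize $\chi_i\mapsto a_i$ in (\ref{preC}). This gives the complex $\operatorname{Hom}_Q(F,\overline{k})$ with differential $\partial+\sum a_ie_i$, a $\mathbb{Z}/2$-graded or periodic complex. By \cite[1.2.2]{embdef} we may take $F$ bounded and finite free over $Q$, so $\mathcal{C}_E(F)$ is a finite free $\mathcal{S}$-module. A standard semicontinuity argument then shows that $a\in\operatorname{V}_R(M)$ exactly when this specialized complex has nonzero homology. The same fact can be phrased as: $M$ has infinite projective dimension over the hypersurface $Q_a=Q/(f_a)$, where $f_a=\sum a_if_i$. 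This makes $\operatorname{V}_R(M)$ a closed conical set.

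\textbf{Step 2: the support is a hypersurface or everything.} This is the main obstacle, and it is where the Golod hypothesis must enter. I would first try to exploit that $R$ Golod means the Koszul homology has trivial multiplication and trivial Massey products. Consequently $E\to R$ factors homotopically through a "Golod" situation: for each $a$, the map $Q_a\to R$ is a Golod homomorphism whenever $f_a\notin\mathfrak{m}I$. With this, the Poincaré series of $M$ over $Q_a$ is controlled by that over $R$ together with a single Eisenbud operator. In $\mathcal{S}$-module terms, the aim is to show that the annihilator of $\operatorname{Ext}_E(M,k)$ is either $0$ or has height at most one. Concretely, I would show that $\operatorname{Ext}_E(M,k)$ is, up to finite length, a module over $\mathcal{S}$ whose $\mathcal{S}_+$-torsion-free part has a presentation by a square matrix. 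Its determinant then cuts out $\operatorname{V}_R(M)$, giving either $0$ (so the support is all of $\mathbb{A}^n$) or a single form (so the support is a hypersurface). Trivial Massey products are exactly what should kill the higher $A_\infty$ terms that would otherwise produce larger-codimension components. If that structural route fails, the fallback is to bound $\dim\operatorname{V}_R(M)\geq n-1$ via growth: Golod rings force exponential growth of Betti numbers unless $M$ is of special form.

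\textbf{Step 3: realizability.} Given a form $g\in\overline{k}[\chi_1,\ldots,\chi_n]$, choose a linear combination matrix $A$ with $\det A=g$ (for example via a determinantal representation). Build $M$ as a Koszul-type complex on $R$ whose specialized differential at $a$ is $A(a)$. Then $\operatorname{V}_R(M)=\mathcal{V}(g)$ by Step 1.

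\textbf{Step 4: $\operatorname{V}_R(R)=\mathbb{A}^n_k$ when $R$ is not a hypersurface.} Here $M=R$. For each $a\neq 0$, the map $Q_a\to R$ is Golod and $R$ is not a hypersurface, so $\operatorname{pd}_{Q_a}R=\infty$. Indeed, the Poincaré series of $R$ over $Q_a$ has a denominator of the form $1-t\,\mathrm{P}^Q_R(t)+\cdots$ with infinitely many nonzero coefficients. By Step 1, every $a$ lies in the support.
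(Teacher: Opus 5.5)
You read the hypothesis correctly: it must be $\operatorname{H}(M)\neq0$. Step 1 is also fine; it is Proposition \ref{prop_supp_Chat}, equivalently $a\in\operatorname{V}_R(M)$ iff $\operatorname{pd}_{Q_a}M=\infty$. The paper gives no proof of this statement (it only quotes it from \cite{bounds}), so your argument has to stand on its own, and it does not yet.

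\textbf{Step 2 is not an argument.} It carries the whole dichotomy, but it lists hoped-for structures rather than proving anything.
\begin{itemize}
\item Your target is misstated. ``The annihilator of $\operatorname{Ext}_E(M,k)$ has height at most one'' still allows supports such as $\mathcal{V}(\chi_1)\cup\mathcal{V}(\chi_2,\chi_3)$. You need every minimal prime of the support to have height at most one.
\item The claim that $Q_a\to R$ is Golod for all $a\neq0$ has no proof or reference. Even granting it, you never connect it to the codimension of the support.
\item That the torsion-free part of $\operatorname{Ext}_E(M,k)$ has a square presentation matrix is exactly what must be shown, not an input.
\item The growth fallback cannot work. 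Exponential growth of Betti numbers over $R$ concerns $\operatorname{Ext}_R(M,k)$, whereas $\operatorname{Ext}_E(M,k)$ is finitely generated over $\mathcal{S}$ and always grows polynomially.
\end{itemize}
What is missing is a concrete consequence of Golodness for the $E$-action on a \emph{minimal} Koszul resolution $F$ of $M$. One situation where this works: suppose the action factors through a minimal DG algebra resolution $F_R$ of $R$, with $F$ a DG $F_R$-module. Golodness puts all products of positive-degree elements of $F_R$ into $\mathfrak{m}F_R$. So the operators $\ell_i$ induced by $e_i$ on $N=F\otimes_Q\overline{k}$ satisfy $\ell_i\ell_j=0$, and minimality gives $d_a=\sum a_i\ell_i$. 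Set $K:=\bigcap_i\ker\ell_i$; then $\sum_i\operatorname{im}\ell_i\subseteq K$. Hence $d_a$ is exact iff the induced map $\overline{\ell}_a\colon N/K\to K$ is an isomorphism. So $\operatorname{V}_R(M)=\mathbb{A}^n$ if $\dim N/K\neq\dim K$, and $\operatorname{V}_R(M)=\mathcal{V}(\det\overline{\ell}_a)$ otherwise. Your square-matrix heuristic points this way. In general, though, minimal resolutions carry no such DG structures, and controlling the higher $A_\infty$ corrections to $d_a$ is the real work; none of it appears.

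\textbf{Step 3 has the same defect.} The differential $d_a$ is determined by $M$ through its Koszul resolution and $E$-action, so you cannot prescribe it. Realizability means constructing an $R$-complex whose resolution produces a given matrix, and you construct none. Also, $\det A=g$ with $A$ a square matrix of linear forms forces $A$ to be $\deg g\times\deg g$, which often fails: a quadric of rank at least $5$ is not a $2\times2$ linear determinant. In general you can only expect $\det A=c\,g^r$ with $A$ larger. That suffices set-theoretically, but it still has to be realized by an actual complex over $R$.

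\textbf{Step 4's justification is wrong.} Golodness of $Q_a\to R$ expresses $\operatorname{P}^R_k$ in terms of $\operatorname{P}^{Q_a}_R$; it says nothing about a denominator for $\operatorname{P}^{Q_a}_R$. The conclusion is salvageable once you actually establish that $Q_a\to R$ is Golod. Compare the Golod formulas for $Q\to R$ and $Q_a\to R$, using $\operatorname{P}^{Q_a}_k=(1+t)^d/(1-t^2)$ with $d=\operatorname{edim}Q$. This gives
\[\operatorname{P}^Q_R(t)-1=(1-t^2)\bigl(\operatorname{P}^{Q_a}_R(t)-1\bigr)+t .\]
If $\operatorname{pd}_{Q_a}R=m\geq1$, the coefficient of $t^{m+2}$ on the right is negative, which is absurd. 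So $m=0$ and $R=Q_a$ is a hypersurface. Alternatively, in the framework sketched above with $M=R$, one has $\dim N/K=1<n\leq\dim K$, which gives $\operatorname{V}_R(R)=\mathbb{A}^n$ directly.
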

We will consider \textit{monomial ideals} within these $Q$, referring to monomials on a regular sequence of $Q$ or honest-to-God monomials, respectively. The consideration of these ideals began in \cite{embdef}, and yielded the following
\begin{theorem}[{\cite[Theorem 6.16]{embdef}}]\label{onefour}
Let $R$ have minimal regular presentation $Q/I$ and assume $I$ is minimally generated by five monomials. Then the cohomological support variety of $R$ is either a coordinate subspace of $\mathbb{A}^5_k$ or a union of two hyperplanes. More precisely, up to reordering of the generators, $\operatorname{V}_R(R)$ is either $\mathcal{V}(\chi_1\chi_5)$ or a hyperplane subspace.
\end{theorem}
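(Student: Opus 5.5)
This is Theorem 6.16 of \cite{embdef}, quoted here as background. The plan is to follow the monomial strategy of that paper and reduce the computation of $\operatorname{V}_R(R)$ to a finite combinatorial check.

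\textbf{Reduction to a finite check.} First replace $F$ in (\ref{preC}) by the Taylor resolution $T$ of $R$ over $Q$. Then $\operatorname{Hom}_Q(T,k)$ has a $k$-basis indexed by subsets $J\subseteq\{1,\ldots,n\}$. The component $1\otimes\partial^{\operatorname{Hom}_Q(T,k)}$ vanishes wherever the lcm condition makes the Taylor coefficient a unit, so in $\operatorname{Hom}_Q(T,k)$ it survives only on pairs $J\subset J\cup\{i\}$ with $\operatorname{lcm}(f_J)=\operatorname{lcm}(f_{J\cup i})$. The $e_i$ act through the DG $E$-module structure on $T$, which is built from multiplication in the Taylor complex: $e_i\cdot u_J=\pm(\operatorname{lcm}f_{J\cup i}/\operatorname{lcm}f_J)\,u_{J\cup i}$ up to correction terms. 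After reducing modulo the maximal ideal, only the unit coefficients survive, namely those where adding $i$ does not change the lcm.

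Consequently $\mathcal{C}_E(T)\otimes_{\mathcal{S}}k(a)$, for a point $a\in\overline{k}^n$, is a $2^n$-dimensional complex whose differential is
\[
d_a=\sum_{i}\bigl(\epsilon_{J,i}+a_i\,\delta_{J,i}\bigr),
\]
where $\epsilon_{J,i}$ and $\delta_{J,i}$ are signed $0/1$ entries determined by the lcm lattice. Since $\mathcal{C}_E(T)$ is a finite free $\mathcal{S}$-module, $a\in\operatorname{V}_R(R)$ exactly when $d_a$ is not exact, by the standard Nakayama/rank argument. Thus $\operatorname{V}_R(R)$ depends only on the lcm-lattice type of $(f_1,\ldots,f_5)$.

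\textbf{Splitting the support.} The next step is to show that $\operatorname{V}_R(R)$ is governed by the pieces of the lattice where lcm's coincide. If no $f_i$ divides $\operatorname{lcm}$ of the others in any subset, as for a complete intersection or Taylor-minimal ideals, the support is empty or a coordinate subspace. In general, each generator $i$ that never changes the lcm when adjoined contributes a factor, and the support is cut out by the vanishing of the corresponding $a_i$.

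To make this precise, filter the complex by $|J|$. The associated graded is a Koszul-type complex in the variables $a_i$ tensored with the simplicial cochain complexes of the lcm-lattice intervals. A spectral sequence, or a direct contraction argument (Gaussian elimination along the unit $\epsilon$ entries), collapses $d_a$ to a small complex involving only the "essential" generators.

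\textbf{Case analysis for $n\le5$.} Finally, enumerate the lcm lattices of five monomials up to symmetry, using the classification of minimal free resolution shapes for at most five monomials. For each type, compute the collapsed complex. The only determinant that can appear nontrivially is a product $a_ia_j$, arising from a configuration like a path $x_1x_2,x_2x_3,\ldots$ in which two generators interact through a shared variable chain. This produces $\mathcal{V}(\chi_1\chi_5)$; every other type produces a coordinate subspace.

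The main obstacle is this last step. The number of lcm-lattice types for five monomials is large, so I would first use the reductions above (removing variables that occur in only one generator, and splitting off generators whose variables are disjoint from the rest) to shrink the list. I would then argue that any irreducible determinant of degree at least $3$ requires a cycle of length at least $6$, as in the $6$-cycle of Theorem \ref{617}, which is impossible with five generators.
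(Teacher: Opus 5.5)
The paper does not prove this statement; it is quoted from \cite{embdef} as background, so your proposal has to be judged on its own. Your framework is the right one: specialize the Taylor resolution at a point $a$ via Proposition \ref{prop_supp_Chat}, then compare with reduced cochain complexes of lcm classes, which is the machinery of Section \ref{otimesk}. But your reduction contains concrete errors.

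First, you have misidentified the $e_i$-action. The Taylor product gives $e_i\cdot b'_J=\pm\frac{f_if_J}{f_{J\cup\{i\}}}\,b'_{J\cup\{i\}}=\pm\gcd(f_i,f_J)\,b'_{J\cup\{i\}}$ for $i\notin J$. So modulo the maximal ideal, the surviving $a_i$-terms are exactly those with $f_{J\cup\{i\}}=f_if_J$. Where adjoining $i$ leaves the lcm unchanged, the coefficient is $f_i\in\mathfrak{m}$ and vanishes; only the Taylor differential survives there (Construction \ref{taylork}). Your description already fails for the complete intersection $(x_1^2,\ldots,x_5^2)$: it yields $d_a=0$ and hence full support, whereas the correct $d_a$ is the Koszul complex on $a_1,\ldots,a_5$, exact for $a\neq0$. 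For the same reason $\operatorname{V}_R(R)$ is not an invariant of the lcm lattice. The ideals $(x_1^2,x_2^2)$ and $(x_1^2,x_1x_2)$ have isomorphic Boolean lcm lattices, but their supports are $\{0\}$ and $\mathbb{A}^2$. What matters is the divisibility and coprimality data of Lemma \ref{codenameprin}.

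Second, filtering by $|J|$ does not give a filtered complex, because the differential part of $d_a$ lowers $|J|$ while the $a_ie_i$ part raises it. What works is filtering by $\deg f_J$: the differential part preserves $f_J$ and the $a_ie_i$ part multiplies it by $f_i$. The associated graded is then $\bigoplus_M T_M(\boldsymbol{\mathit{f}})$, and by Lemma \ref{ksgnrel} each summand is a reindexed $\widetilde{C}^\bullet(\Delta_M,\overline{k})$. The $a_i$ appear only in $d_1$ and higher differentials; there is no Koszul factor in the associated graded. The paper's upgrade to an honest double complex (Theorem \ref{doublecomplex}) needs a weak grading, and this already fails for the five generators $(x_{\underline{12}},x_{\underline{34}},x_{\underline{56}},x_{\underline{135}},x_{\underline{246}})$. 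So here you would have to run the spectral sequence on the $\mathbb{Z}/2$-graded complex directly.

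More seriously, the case analysis is the whole content of the theorem, and you do not carry it out. You neither cite nor perform a classification of lcm-lattice or resolution types of five monomials. By the above, such a classification would be the wrong invariant anyway, since resolution shapes do not see the coprimality data that governs the $a_i$-terms.

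Your preliminary reductions also need care. Deleting a variable that occurs in only one generator can create new divisibilities $f_i\mid f_J$; in a path, replacing $x_1x_2$ by $x_2$ destroys minimality. Lemma \ref{227} only allows merging variables that occur in exactly the same generators.

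Your claims that the only nontrivial determinant is some $a_ia_j$, and that an irreducible determinant of degree at least $3$ requires a $6$-cycle, are heuristics. Even if granted, they do not cover everything the theorem excludes. $\operatorname{V}_R(R)$ is cut out by all $16\times16$ minors of the $32\times32$ matrix $d_a$, not by a single determinant. So you must also rule out irreducible quadrics such as $\mathcal{V}(\chi_1\chi_3+\chi_2\chi_4)$, unions of three or more hyperplanes, and mixed unions such as $\mathcal{V}(\chi_1)\cup\mathcal{V}(\chi_2,\chi_3)$.

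What is missing is a finite, checkable reduction. One option is the full-support criteria from \cite{embdef} used in Section \ref{compimp}: an isolated vertex of the Taylor graph forces $\operatorname{V}_R(R)=\mathbb{A}^5_k$, as does a GCD graph having either a vertex adjacent to all others or an edge such that every other vertex is adjacent to exactly one of its endpoints. After that you need an explicit computation of $d_a$, or of the lcm-filtration spectral sequence, for each surviving GCD graph on five vertices and each compatible divisibility pattern.
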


\subsection{Constructing \texorpdfstring{$\bm{{\operatorname{V}}_R(R)}$}{VR(R)} for \texorpdfstring{$\bm{R}$}{R} a monomial ideal}
We provide a blueprint for constructing $\operatorname{V}_R(R)$ for $R$ a monomial ideal, based primarily on \cite{embdef}, rather than a comprehensive account on how the underlying structures can be used to constructing cohomological support varieties in a more general setting. Those interested in performing these constructions in a more general setting should consult \cite{particle}.

Our calculations of the cohomological support varieties of monomial ideals in this paper stem from the following
\begin{proposition}[{\cite[Proposition 2.8]{embdef}}]\label{prop_supp_Chat}
For an $R$-complex $M$ with $H(M)$ finitely generated, fix a Koszul resolution $F$ of $M$. Then
\[
\operatorname{V}_R(M)=\{a\in \mathbb{A}_{\overline{k}}^n: H(\widehat{\mathcal{C}}_{E_a}(F))\neq 0\} \cup\{0\}=\{a\in \mathbb{A}_{\overline{k}}^n: H_p(\widehat{\mathcal{C}}_{E_a}(F))\neq 0\} \cup\{0\}\,,
\]
where $p\in \{\mathrm{even},\mathrm{odd}\}$ and
\begin{equation}
\label{chat}\widehat{\mathcal{C}}_{E_a}(F):=\cdots \longrightarrow  F_{\rm even}\otimes_Q  \overline{k}\xrightarrow{\ d_a\ }    F_{\rm odd}\otimes_Q \overline{k}\xrightarrow{\ d_a\ }
    F_{\rm even}\otimes_Q \overline{k}\longrightarrow \cdots
\end{equation}
where
\[
d_a:=\partial^{F}\otimes 1+\sum e_i \otimes a_i
\]
and $\overline{k}$ is a fixed algebraic closure of $k$.
\end{proposition}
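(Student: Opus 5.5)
We need to prove the proposition: $\operatorname{V}_R(M)$ is the set of $a\in\mathbb{A}^n_{\overline{k}}$ for which the two-periodic complex $\widehat{\mathcal{C}}_{E_a}(F)$ has nonzero homology (together with the origin), and nonzero homology in one parity is equivalent to nonzero homology in the other.

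\emph{Step 1: reduce to a finite free $\mathcal{S}$-module.} By \cite[1.2.2]{embdef} we may take $F$ to be a bounded complex of finite free $Q$-modules. Then $\mathcal{C}_E(F)$ from (\ref{preC}) is a finite rank free graded $\mathcal{S}$-complex, and its homology $\operatorname{Ext}_E(M,k)$ is finitely generated over $\mathcal{S}$. For a finitely generated graded module $H$ over $\mathcal{S}\otimes_k\overline{k}$, a point $a\neq 0$ lies in $\operatorname{Supp}^+$ exactly when the localization at the homogeneous prime of the line through $a$ is nonzero. For the Nullstellensatz-level statement it suffices to test the graded maximal ideals of points.

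\emph{Step 2: specialize to a point.} Write $\mathfrak{m}_a$ for the maximal ideal of $a$ in $\overline{\mathcal{S}}=\mathcal{S}\otimes_k\overline{k}$. Because each $\chi_i$ has degree $-2$, specializing $\chi_i\mapsto a_i$ destroys the $\mathbb{Z}$-grading but keeps a $\mathbb{Z}/2$-grading. We get
\[
\mathcal{C}_E(F)\otimes_{\overline{\mathcal{S}}}\overline{\mathcal{S}}/\mathfrak{m}_a\;\cong\;\widehat{\mathcal{C}}_{E_a}(F),
\]
since $\operatorname{Hom}_Q(F,k)$ is the $\overline{k}$-dual of $F\otimes_Q\overline{k}$. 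Up to this dualization, which preserves whether homology vanishes, the specialized differential is exactly $d_a$.

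Next, localize at $\mathfrak{m}_a$. Then $C=\mathcal{C}_E(F)_{\mathfrak{m}_a}$ is a bounded-rank free complex over the regular local ring $\overline{\mathcal{S}}_{\mathfrak{m}_a}$, so it is perfect. The derived Nakayama lemma gives $H(C)=0$ if and only if $H(C\otimes^{\mathbf L}\overline{k})=H(C\otimes\overline{k})=0$. Combined with Step 1, this shows that $a\in\operatorname{V}_R(M)$ iff $H(\widehat{\mathcal{C}}_{E_a}(F))\neq0$, for $a\ne0$.

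\emph{Main obstacle: the grading mismatch.} Support is defined projectively, on graded modules, whereas the point test is ungraded. I would handle this by noting that for homogeneous $H$, the support is a cone. So $\mathfrak{m}_a\in\operatorname{Supp}H$ iff the homogeneous prime of the line $\overline{k}a$ lies in the support. This is exactly membership in $\operatorname{Supp}^+$, and adjoining $\{0\}$ accounts for the irrelevant ideal.

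\emph{Step 3: parity.} The two-periodic complex at $a\neq0$ has nonzero even homology iff it has nonzero odd homology. I would prove this with an Euler characteristic argument: $\dim F_{\rm even}\otimes\overline{k}-\dim F_{\rm odd}\otimes\overline{k}$ equals $\dim H_{\rm even}-\dim H_{\rm odd}$. This characteristic also equals the generic value, and at a generic point homology vanishes because the variety is proper, which is the case when $\operatorname{V}\neq\mathbb{A}^n$. So the characteristic is $0$ in that case. If instead $\operatorname{V}=\mathbb{A}^n$, then the characteristic equals the rank of $H$ at the generic point. To reach the same conclusion there, I would use the rank formula for the $\mathbb{Z}/2$-folded complex, or alternatively invoke the periodicity operator: multiplication by a linear form not vanishing at $a$ shifts degree by $2$ and identifies the two parities of the localized homology via the $\mathcal{S}$-module structure. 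I would try the latter first.
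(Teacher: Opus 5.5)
The paper does not prove this proposition; it quotes it from \cite{embdef}, so I am judging your argument on its own. Steps 1--2 are a workable direct route, but Step 3 has a genuine gap. For $\operatorname{V}_R(M)=\mathbb{A}^n$ you give no argument, and the fix you would try first cannot work. Each $\chi_i$ has degree $-2$, so multiplication by a linear form maps $H_{\mathrm{even}}$ to $H_{\mathrm{even}}$ and $H_{\mathrm{odd}}$ to $H_{\mathrm{odd}}$; the $\mathcal{S}$-action never relates the two parities. Your ``rank formula'' alternative only restates the Euler characteristic as the generic rank of the even part of $\operatorname{Ext}_E(M,k)$ minus that of the odd part, which is the same unknown.

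The missing input is $\operatorname{rank}_Q F_{\mathrm{even}}=\operatorname{rank}_Q F_{\mathrm{odd}}$. Here $F$ is a bounded complex of finite free $Q$-modules with $H(F)\cong H(M)$. Each $H_i(M)$ is a finitely generated $R$-module, hence killed by $I\neq 0$, hence torsion over the domain $Q$. Tensoring with $\operatorname{Frac}(Q)$ therefore gives $\sum_i(-1)^i\operatorname{rank}_QF_i=\sum_i(-1)^i\operatorname{rank}_QH_i(M)=0$. Your identity $\dim H_{\mathrm{even}}-\dim H_{\mathrm{odd}}=\dim(F_{\mathrm{even}}\otimes_Q\overline{k})-\dim(F_{\mathrm{odd}}\otimes_Q\overline{k})$ then gives $\dim H_{\mathrm{even}}=\dim H_{\mathrm{odd}}$ at every $a$, with no case split. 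Your argument for $\operatorname{V}_R(M)\neq\mathbb{A}^n$ is correct but becomes unnecessary.

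The hypothesis $I\neq0$ really must enter. Take the degenerate case $f_1=\cdots=f_n=0$ and $M=Q$. Then $F=Q$ and $\operatorname{Ext}_E(M,k)\cong\mathcal{S}$ has full support, yet at every $a$ the even homology is $\overline{k}$ and the odd homology is $0$. So no argument using only the graded $\mathcal{S}$-module $\operatorname{Ext}_E(M,k)$ can succeed.

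A smaller issue is in Step 2. After localizing at the non-homogeneous ideal $\mathfrak{m}_a$, the complex $C$ is only $\mathbb{Z}/2$-graded. ``Perfect'' and the derived Nakayama lemma concern bounded $\mathbb{Z}$-graded complexes, so they do not apply as stated. The equivalence $H(C)=0\iff H(C\otimes\overline{k})=0$ is still true over the regular local ring $A=\overline{\mathcal{S}}_{\mathfrak{m}_a}$, but it needs its own argument.
\begin{itemize}
\item For $H(C\otimes\overline{k})=0\Rightarrow H(C)=0$: repeatedly split off a summand $A\xrightarrow{1}A$ whenever the differential has a unit entry. This writes $C$ as a minimal complex (differential with entries in $\mathfrak{m}_aA$) plus a contractible one. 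If the fiber is acyclic, Nakayama forces the minimal part to vanish. This works over any local ring.
\item For the converse: if $C$ is acyclic, then $\operatorname{im}(d|_{C_{\mathrm{even}}})$ is a $2k$-th syzygy, for every $k$, in the periodic free resolution $\cdots\to C_{\mathrm{odd}}\to C_{\mathrm{even}}\to\operatorname{im}(d|_{C_{\mathrm{even}}})\to 0$. Since its projective dimension is finite, it is free, and likewise for the odd image. Hence $C$ is split exact.
\end{itemize}
Regularity is essential in the second direction: over $k[t]/(t^2)$, the periodic complex with both maps equal to $t$ is acyclic while its fiber is not.
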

Letting $B_{\text{even}}$ and $B_{\text{odd}}$ be bases of $F_{\text{even}}\otimes_Q\overline{k}$ and $F_{\text{odd}}\otimes_Q\overline{k}$ respectively, the above describes $d_a$ as an automorphism on $F\otimes_Q\overline{k}$ expressible as follows:
\[
\begin{blockarray}{ccc}
& B_{\text{even}} & B_{\text{odd}}\\
\begin{block}{c[cc]}
  B_{\text{even}} & 0 & * \\
  B_{\text{odd}} & * & 0 \\
\end{block}
\end{blockarray}.
\]
This formulation leads us to the following
\begin{corollary}\label{ceafp}
Under the conditions above,
\[
\operatorname{V}_R(M)=\{a\in \mathbb{A}_{\overline{k}}^n: H(\widehat{\mathcal{C}}_{E_a}'(F))\neq 0\} \cup\{0\}\,,
\]
where $\widehat{\mathcal{C}}_{E_a}'(F)$ is the vector space $F\otimes_Q\overline{k}$ endowed with the automorphism given by $d_a$.
\end{corollary}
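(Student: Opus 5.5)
The plan is to deduce Corollary \ref{ceafp} directly from Proposition \ref{prop_supp_Chat}. The point is that the $2$-periodic complex $\widehat{\mathcal{C}}_{E_a}(F)$ and the single vector space $V:=F\otimes_Q\overline{k}$ equipped with $d_a$ carry the same information: both have underlying space $V=V_{\rm even}\oplus V_{\rm odd}$, where $V_{\rm even}=F_{\rm even}\otimes_Q\overline{k}$ and $V_{\rm odd}=F_{\rm odd}\otimes_Q\overline{k}$. It therefore suffices to show that, for each fixed $a$, the homology of $(V,d_a)$ vanishes if and only if the homology of $\widehat{\mathcal{C}}_{E_a}(F)$ does.

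First I record that $d_a$ squares to zero and is odd with respect to the splitting. It is odd because $\partial^F$ lowers homological degree by one and each $e_i$ raises it by one, so both move $V_{\rm even}$ to $V_{\rm odd}$ and back; this is the block form displayed after the proposition. For $d_a^2=0$:
\begin{itemize}
\item $(\partial^F)^2=0$;
\item $e_ie_j=-e_je_i$ and $e_i^2=0$, so $\bigl(\sum e_i\otimes a_i\bigr)^2=0$;
\item the cross terms give $\sum_i a_i(\partial e_i+e_i\partial)$, which acts on $F$ as multiplication by $\sum_i a_i f_i$ by the Leibniz rule, since $\partial e_i=f_i$.
\end{itemize}
After tensoring with $\overline{k}$ this last term vanishes, because each $f_i$ lies in the maximal ideal (respectively, is a form of positive degree) by the minimal regular presentation hypothesis. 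So $d_a^2=0$, and $H(V,d_a)=\ker d_a/\operatorname{im} d_a$ makes sense. I read the word "automorphism" in the statement as "endomorphism (differential)".

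Since $d_a$ is odd, its kernel and image split compatibly with the decomposition:
\[
\ker d_a=\ker(d_a|_{V_{\rm even}})\oplus\ker(d_a|_{V_{\rm odd}}),\qquad \operatorname{im} d_a=d_a(V_{\rm odd})\oplus d_a(V_{\rm even}).
\]
Hence
\[
H(V,d_a)=H_{\rm even}(\widehat{\mathcal{C}}_{E_a}(F))\oplus H_{\rm odd}(\widehat{\mathcal{C}}_{E_a}(F)).
\]
In particular, $H(V,d_a)\neq 0$ exactly when $H(\widehat{\mathcal{C}}_{E_a}(F))\neq 0$. Substituting this into the first equality of Proposition \ref{prop_supp_Chat} gives the corollary.

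The only step that needs any care is confirming that the splitting of kernel and image is honest. This uses only that $d_a$ maps each parity summand into the other, so there is no real obstacle; the argument is just bookkeeping on the block matrix.
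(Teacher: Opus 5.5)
Your proposal is correct and takes the same route as the paper. The paper treats the corollary as immediate from Proposition \ref{prop_supp_Chat}: with respect to $F_{\rm even}\otimes_Q\overline{k}\oplus F_{\rm odd}\otimes_Q\overline{k}$, the map $d_a$ is block anti-diagonal, so the homology of the single space with $d_a$ is $H_{\rm even}\oplus H_{\rm odd}$ of the $2$-periodic complex. Your check that $d_a^2=0$ and your reading of ``automorphism'' as ``endomorphism'' supply details the paper leaves implicit.
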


\noindent This proposition allows us to explicitly determine the cohomological support variety of an $R$-module $M$, given a sufficiently simple Koszul resolution $F$ of $M$.

$\widehat{\mathcal{C}}_{E_a}'(F)$ can be shown 

We now introduce some prerequisites specific to monomial ideals. To start, henceforth in this chapter and the next, let $Q/I$ be a minimal regular presentation of $R$ with residue or base field $k$, depending on whether we lie in the local case or the graded case, respectively. We will let $d$ be the embedding dimension of $Q$ and fix a regular sequence $x_1,\ldots,x_d$ of $Q$ in the former case, and name our variables as such in the latter, so that the notion of a \textit{monomial} of $Q$ is well-defined. We let $\boldsymbol{\mathit{f}}=f_1, \ldots, f_n$ be a sequence of monomials in $Q$ such that $I = (f_1, \ldots, f_n)$ is a proper ideal of $Q$.

\begin{notation}\label{signnotation}
Let $[n]$ denote the ordered set $\{ 1, \ldots, n \}$. If $J$ is a set of integers, ordered or unordered, we let $|J|$ denote its cardinality and $\operatorname{sort}(J)$ denote the ordered set with the same elements as $J$, ordered from least to greatest. If $J$ is ordered, $\operatorname{sgn}(J)$ denote the sign of the permutation of $J$ yielding $\operatorname{sort}(J)$. If $J$ and $K$ are disjoint sets of integers, we let $JK$ denote their concatenation.
\end{notation}

\noindent The following construction can be traced back to \cite{taylorthesis}:

\begin{construction}\label{taylor}
Each $f_j$ can be written as
\[
f_j = x_1^{a_{j1}} \cdots x_d^{a_{jd}} \quad \text{ for some }a_{ji} \geqslant 0\,.
\]
Given a subset $J$ of $[n]$, set
\[
f_J:= x_1^{a_{J1}}\cdots x_d^{a_{Jd}}\quad\text{where }a_{Ji}=\max\{a_{ji}:j\in J\}\,.
\]
In particular, $f_j=f_{\{j\}}$.
The \textit{Taylor complex on $\boldsymbol{\mathit{f}}$} (with respect to $x_1,\ldots,x_d$) is a  free $Q$-complex $T=T(\boldsymbol{\mathit{f}})$, defined as follows. As a free graded $Q$-module, $T^\natural$ can be assigned a basis in degree $i$ given by
\[
\{b_J': J\subseteq[n]\text{ with }|J|=i\}\,,
\]
and if $J$ is an ordered set we write $b_{J}'=\operatorname{sgn}(J)b_{\operatorname{sort}(J)}'$. The differential on $T$ is the $Q$-linear map determined by 
\[
\partial(b_J') = \sum_{i=1}^s (-1)^{i-1} \, \frac{f_J}{f_{J \backslash \{ j_i \}}} \, b_{J \backslash \{ j_i \}}'\quad \text{with }J=\{j_1 < \cdots < j_s\}\,.
\]
We equip the Taylor complex with a $Q$-bilinear product, defined on basis elements by
\[
b_J'\cdot b_K' = {\operatorname{sgn}(JK)} \,  \frac{f_J f_{K}}{f_{J\cup K}} \, b_{J\cup K}'\,.
\]
Note that 
\[
b_J'\cdot b_K'=0 \quad \text{if } J \cap K \neq \varnothing.
\]
\end{construction}
\begin{proposition}[{\cite[Theorem 12]{taylorthesis}}]
$T(\boldsymbol{\mathit{f}})$ under these conditions (namely, when $\boldsymbol{\mathit{f}}$ is a sequence of monomials) is a DG $Q$-algebra resolution of $R$. 
\end{proposition}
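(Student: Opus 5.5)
The statement has three parts: $T$ is a complex ($\partial^2=0$); it is acyclic with $\operatorname{H}_0(T)=R$; and the product is a DG algebra structure (associative, graded-commutative, unital, and satisfying the Leibniz rule). I would verify these in that order, reducing everything to combinatorics of the exponent vectors. The key device is to work multigraded. Give $b_J'$ the multidegree $\deg f_J\in\mathbb{N}^d$; then every coefficient $f_J/f_{J\setminus\{j\}}$ and $f_Jf_K/f_{J\cup K}$ is a monomial with nonnegative exponents, because $a_{Ji}\ge a_{J'i}$ whenever $J'\subseteq J$. Hence $\partial$ and the product are homogeneous of degree $0$.

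\textbf{Complex and algebra axioms.} For $\partial^2(b_J')$, the coefficient of $b_{J\setminus\{j,j'\}}'$ arises along two paths. Both paths carry the same monomial $f_J/f_{J\setminus\{j,j'\}}$, since the intermediate $f$'s telescope, and they carry opposite signs, exactly as for the Koszul complex. So $\partial^2=0$.

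The product is handled the same way. Graded commutativity follows from $\operatorname{sgn}(KJ)=(-1)^{|J||K|}\operatorname{sgn}(JK)$. Associativity follows because both bracketings give
\[
\frac{f_Jf_Kf_L}{f_{J\cup K\cup L}}
\]
with the sign $\operatorname{sgn}(JKL)$, and both vanish when the index sets overlap. The unit is $b_\varnothing'$, since $f_\varnothing=1$.

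For the Leibniz rule on $b_J'\cdot b_K'$ with $J\cap K=\varnothing$, expand both sides over $j\in J\cup K$. The term for $j\in J$ on the right is
\[
\frac{f_J}{f_{J\setminus j}}\cdot\frac{f_{J\setminus j}f_K}{f_{(J\cup K)\setminus j}}=\frac{f_Jf_K}{f_{(J\cup K)\setminus j}}.
\]
On the left it is
\[
\frac{f_Jf_K}{f_{J\cup K}}\cdot\frac{f_{J\cup K}}{f_{(J\cup K)\setminus j}},
\]
which agrees. The signs match as in the exterior algebra. The case $J\cap K\neq\varnothing$ needs a separate check: the left side is $0$, and on the right the only possibly surviving terms remove a shared index $j$ from one factor or the other. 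Those two terms have the same monomial coefficient and cancel by sign. This is routine but is the fiddliest bookkeeping.

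\textbf{Acyclicity, the main step.} Since $\partial(b_{\{j\}}')=f_j$, we get $\operatorname{H}_0(T)=Q/I=R$. To show $\operatorname{H}_i(T)=0$ for $i>0$, decompose $T$ by multidegree $\alpha\in\mathbb{N}^d$. The strand $T_\alpha$ has a $k$-basis of elements $x^{\alpha-\deg f_J}b_J'$ over those $J$ with $f_J\mid x^\alpha$. The condition $f_J\mid x^\alpha$ is equivalent to $f_j\mid x^\alpha$ for all $j\in J$, so the admissible $J$ are exactly the subsets of $S_\alpha=\{j: f_j\mid x^\alpha\}$. All coefficients become $\pm1$, so $T_\alpha$ is the augmented simplicial chain complex of the full simplex on $S_\alpha$, shifted by one. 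For $S_\alpha\neq\varnothing$ this is contractible, hence exact. Concretely, fixing $s\in S_\alpha$, the map $b_J'\mapsto b_{\{s\}}'b_J'$ (up to monomial bookkeeping) is a contracting homotopy. For $S_\alpha=\varnothing$ the strand is just $k$ in degree $0$, which is the part of $R$ in degree $\alpha$.

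In the graded case this argument applies directly. In the local case, with monomials in a regular sequence, I would first prove the statement over the polynomial ring $k[x_1,\ldots,x_d]$, or over $\mathbb{Z}[x]$, and then base change along the flat map $\mathbb{Z}[x_1,\ldots,x_d]\to Q$ sending the variables to the regular sequence. Flatness is the local criterion applied to the regular sequence, and it preserves exactness. I expect this reduction, rather than the combinatorics, to be the main point requiring care.
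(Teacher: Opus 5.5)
The paper gives no proof of its own here; it only cites Taylor's thesis. Your polynomial-ring argument is correct and complete:
\begin{itemize}
\item the checks of $\partial^2=0$ and the algebra axioms hold;
\item the Leibniz rule holds, including the case $J\cap K=\{j\}$, where the two surviving terms both carry $f_Jf_K/f_{J\cup K}$ and cancel as in the Koszul complex;
\item the multigraded-strand argument identifies each strand with the augmented chain complex of a full simplex, so it proves acyclicity over $k[x_1,\ldots,x_d]$. It works equally over $\mathbb{Z}[x_1,\ldots,x_d]$, since the strands are free $\mathbb{Z}$-modules and the cone homotopy is integral.
\end{itemize}
That settles the graded case.

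The gap is the transfer to the local case. The map $\mathbb{Z}[t_1,\ldots,t_d]\to Q$, $t_i\mapsto x_i$, is not flat once the residue field has characteristic $p>0$. The contraction of $\mathfrak{m}_Q$ is then $(p,t_1,\ldots,t_d)$, so the local criterion would require $p,x_1,\ldots,x_d$ to be $Q$-regular. That is impossible, since $x_1,\ldots,x_d$ is already a maximal regular sequence (or $p=0$ in $Q$). Concretely, $p$ is a nonzerodivisor on $\mathbb{Z}[t]$ that kills $Q=\mathbb{F}_p[[x]]$. Likewise, $t_1-p$ is a nonzerodivisor that kills $Q=\mathbb{Z}_p$ with $x_1=p$. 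Using the prime field $k_0$ instead of $\mathbb{Z}$ fixes the equicharacteristic case: $k_0[t]_{(t)}\to Q$ is local and $\operatorname{Tor}_1(k_0,Q)=\operatorname{H}_1(x;Q)=0$. But a mixed-characteristic $Q$, such as $\mathbb{Z}_p[[y]]$ with monomials in $p,y$, contains no field. As written, that case is not proved.

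What you need is Tor-independence, not flatness. Set $S=\mathbb{Z}[t_1,\ldots,t_d]$ and let $J\subseteq S$ be the monomial ideal with $JQ=I$. Then $T(\boldsymbol{\mathit{f}})=T_S\otimes_S Q$, where $T_S$ is the Taylor resolution of $S/J$ (your strand argument over $\mathbb{Z}$). Hence $\operatorname{H}_i(T(\boldsymbol{\mathit{f}}))=\operatorname{Tor}_i^S(S/J,Q)$. To show this vanishes for $i>0$:
\begin{itemize}
\item Choose a monomial $m\notin J$ with $J:m$ maximal among colon ideals of this form. Then $J:m$ is generated by variables: if a minimal generator $t_iu'$ had $u'\neq 1$, then $J:(mu')\supsetneq J:m$.
\item Use Noetherian induction on $J$ along $0\to S/(J:m)\xrightarrow{\,m\,}S/J\to S/(J+(m))\to 0$.
\item Use $\operatorname{Tor}_i^S(S/(t_{i_1},\ldots,t_{i_r}),Q)=\operatorname{H}_i(x_{i_1},\ldots,x_{i_r};Q)=0$ for $i>0$. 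This holds because regular sequences in a local ring are permutable, so subsequences are regular.
\end{itemize}
Together these give $\operatorname{Tor}^S_{>0}(S/J,Q)=0$. The argument uses only that the $x_i$ form a regular sequence in the local ring $Q$, which is exactly what the paper's local case provides.
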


\noindent Consequently, we will refer to $T(\boldsymbol{\mathit{f}})$ as the \textit{Taylor resolution of $R$ via $\boldsymbol{\mathit{f}}$}, or simply the \textit{Taylor resolution} of $R$ when $\boldsymbol{\mathit{f}}$ is clear. A more general construction of the Taylor complex in which $\boldsymbol{\mathit{f}}$ is not necessarily a sequence of monomials and which is not necessarily a resolution can be found, for example, in \cite{yuzvinsky}.
\begin{proposition}
$T(\boldsymbol{\mathit{f}})$ is naturally an $E$-module by letting $e_i\times-$ be given by $b_i\times-$\,, and is as such a Koszul resolution of $R$.
\end{proposition}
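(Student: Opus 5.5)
We need to show two things: left multiplication by $b_i'$ (written $b_i$) makes $T=T(\boldsymbol{\mathit{f}})$ a DG $E$-module, and restricted to $Q$ this DG $E$-module is a semiprojective resolution of $R$. The second part is nearly free. Restriction of scalars along $Q\to E$ forgets the $e_i$-action, and by \cite[Theorem 12]{taylorthesis} the underlying $Q$-complex of $T$ is a finite free resolution of $R$ concentrated in nonnegative degrees. A bounded-below complex of free $Q$-modules is semifree, hence semiprojective. So once we know $T\to R$ is a map of DG $E$-modules, it is a Koszul resolution in the sense defined above.

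For the DG $E$-module structure, we use that $E=Q[e_i\mid\partial e_i=f_i]$ is the free graded-commutative DG $Q$-algebra on degree-one generators with prescribed boundaries. A DG $E$-module structure on a DG $Q$-module $T$ is therefore the same as degree-one maps $\varepsilon_i$ satisfying three conditions:
\begin{enumerate}
\item $\varepsilon_i\varepsilon_j=-\varepsilon_j\varepsilon_i$;
\item $\varepsilon_i^2=0$;
\item the Leibniz rule $\partial\varepsilon_i+\varepsilon_i\partial=f_i\cdot\mathrm{id}$.
\end{enumerate}
We take $\varepsilon_i=b_i'\cdot-$ and check each condition using that $T$ is a DG $Q$-algebra, meaning the product in Construction \ref{taylor} is associative, graded-commutative and satisfies the Leibniz rule, which is part of Taylor's theorem.

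Associativity gives $\varepsilon_i\varepsilon_j(x)=(b_i'b_j')x$. Graded commutativity in odd degree gives $b_i'b_j'=-b_j'b_i'$, which is the first condition. Directly from the product formula, $b_i'b_i'=0$ because the index sets intersect; this gives the second condition even in characteristic $2$, where anticommutativity alone would not suffice. The Leibniz rule reads
\[\partial(b_i'x)=\partial(b_i')x-b_i'\partial(x),\]
and $\partial(b_i')=f_i\,b_\varnothing'=f_i$. Together these give the third condition.

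The augmentation $T\to R$ kills $T_{\geqslant1}$ and sends $b'_\varnothing$ to $1$. We let the $e_i$ act trivially on $R$, as the paper does. Since $e_i$ raises degree, $e_i\cdot T$ lies in $T_{\geqslant1}$, so the augmentation is $E$-linear.

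The only step needing real care is the DG algebra structure on $T$: associativity, graded commutativity and the Leibniz rule for the stated product with its signs $\operatorname{sgn}(JK)$ and coefficients $f_Jf_K/f_{J\cup K}$. If we could not cite \cite{taylorthesis} for this, we would verify it on basis elements. The coefficients satisfy $f_Jf_K/f_{J\cup K}\cdot f_{J\cup K}f_L/f_{J\cup K\cup L}=f_Jf_Kf_L/f_{J\cup K\cup L}$ when the sets are pairwise disjoint, and this expression is symmetric, which gives associativity. The signs compose as permutation signs. For Leibniz we only need the special case of multiplying by $b_i'$, so it suffices to compare $\partial(b_i'b_J')$ with $f_i b_J'-b_i'\partial b_J'$ term by term via $\operatorname{lcm}$ identities on exponents.
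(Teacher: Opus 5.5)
Your argument is correct and follows the reasoning the paper leaves implicit. The paper gives no separate proof and relies, as you do, on Taylor's theorem that $T(\boldsymbol{\mathit{f}})$ is a DG $Q$-algebra resolution with $\partial b_i'=f_i$: then $e_i\mapsto b_i'$ induces a DG algebra map $E\to T(\boldsymbol{\mathit{f}})$ (this is exactly your three conditions, with $b_i'b_i'=0$ covering characteristic $2$), and the underlying bounded free $Q$-resolution is automatically semiprojective, so your basis-level checks of associativity and of the Leibniz rule for multiplication by $b_i'$ leave nothing missing.
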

\noindent As such, we are free to use $T(\boldsymbol{\mathit{f}})$ as our Koszul resolution $F$ of $R$ in our calculations of $\operatorname{V}_R(R)$.
\begin{remark}[The Taylor resolution versus the Koszul complex]
The Taylor resolution is a modification of the Koszul complex in the monomial ideal case which ensures a resolution. In the Koszul complex, if we assign to each copy of $Q$ the product of the corresponding generators of our ideal, then our differential maps ``preserve'' these assignments in the sense that
\begin{equation}
\label{assignments}\text{assignment}_\text{source}\times\text{ coefficient of map}=\text{assignment}_\text{target}\,,
\end{equation}
allowing for a resolution when our generators form a regular sequence. We observe this same mantra when constructing the Taylor resolution, but in order to construct a resolution when our sequence may not be regular, our assignments of our copies of $Q$ ought to correspond to the least common multiples (LCMs) of the corresponding generators of our ideal, rather than their product.
\end{remark}

Note that our construction of $\operatorname{V}_R(R)$ only requires us to understand $T(\boldsymbol{\mathit{f}})\otimes \overline{k}$. The structure of $T(\boldsymbol{\mathit{f}})\otimes \overline{k}$ is much simpler than that of $T(\boldsymbol{\mathit{f}})$ alone:
\begin{construction}[{$\boldsymbol{\mathit{T(f)\otimes \overline{k}}}$}]\label{taylork}
Let $a_{ji}$ be as in Construction \ref{taylor}. As a free graded $\overline{k}$-module, $(T(\boldsymbol{\mathit{f}})\otimes \overline{k})^\natural$ can be assigned a basis in degree $i$ given by
\[
\{b_J: J\subseteq[n]\text{ with }|J|=i\}\,,
\]
and if $J$ is an ordered set we write $b_{J}=\operatorname{sgn}(J)b_{\operatorname{sort}(J)}$. The differential on $T(\boldsymbol{\mathit{f}})\otimes \overline{k}$ is the $\overline{k}$-linear map determined on basis elements by 
\[
\partial(b_J) = \sum_{f_J=f_{J\backslash\{j_i\}}} (-1)^{i-1} \, b_{J \backslash \{ j_i \}}\quad \text{where }J=\{j_1 < \cdots < j_s\}.
\]
We equip $T(\boldsymbol{\mathit{f}})\otimes \overline{k}$ with a $\overline{k}$-bilinear product, defined on basis elements by
\[
b_J\cdot b_K = \begin{cases}{\operatorname{sgn}(JK)} \, b_{J\cup K}&\text{ if }f_Jf_K=f_{J\cup K}\text{ and }J\cap K=\varnothing,\\
0&\text{otherwise.}
\end{cases}
\]
\end{construction}

\section{Decomposing \texorpdfstring{$T(\boldsymbol{\mathit{f}})\otimes \overline{k}$}{T(f) tensor k-bar}}\label{otimesk}
\subsection{Our decomposition}

The complex $T(\boldsymbol{\mathit{f}})\otimes \overline{k}$ can be decomposed into subcomplexes. In this section, we will use the following
\begin{example}\label{edgeex}
Let $Q=\mathbb{Q}[x_1,\ldots,x_7]$, $\boldsymbol{\mathit{f}}=(x_1x_2,x_2x_3,\ldots,x_6x_7,x_7x_1)$, and $J=\{1,3,5\}$.
\end{example}

\begin{definition}
Let
\[S_J=\left\{K\ \middle|\ f_K=f_J\right\},\]
let $T_J(\boldsymbol{\mathit{f}})$ be the restriction of $T(\boldsymbol{\mathit{f}})\otimes \overline{k}$ to the basis $b_K$ with $K\in S_J$, and let $M_J=\bigcup_{K\in S_J} K$.
\begin{lemma}\label{feb26}
$M_J=M_{J'}$ if and only if $f_J=f_{J'}$.
\end{lemma}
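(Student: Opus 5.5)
Both directions follow from one structural fact: the monomial $f_J$ determines the whole class $S_J$, and the class $S_J$ is closed under unions. First we record that $f_{K\cup K'}=\operatorname{lcm}(f_K,f_{K'})$ for any $K,K'\subseteq[n]$. This holds because each exponent of $f_{K\cup K'}$ is a maximum over $K\cup K'$, which is the maximum of the two separate maxima. Consequently, if $f_K=f_{K'}=f_J$, then $f_{K\cup K'}=f_J$. Since $S_J$ is a finite nonempty family (it contains $J$), induction on the number of sets in the union gives $M_J\in S_J$, that is, $f_{M_J}=f_J$. (If $J=\varnothing$, then $S_J=\{\varnothing\}$ because every nonempty $K$ has $f_K\neq 1$, as $I$ is proper. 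So $M_J=\varnothing$ and the identity still holds.)

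\emph{The ``if'' direction.} If $f_J=f_{J'}$, then by definition $S_J=\{K: f_K=f_J\}=\{K:f_K=f_{J'}\}=S_{J'}$. Taking unions gives $M_J=M_{J'}$.

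\emph{The ``only if'' direction.} Suppose $M_J=M_{J'}$. By the first step, $f_J=f_{M_J}=f_{M_{J'}}=f_{J'}$.

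There is no real obstacle in this argument. The only point needing care is the closure of $S_J$ under unions, which rests on the identity $a_{(K\cup K')i}=\max(a_{Ki},a_{K'i})$ coming directly from Construction \ref{taylor}, together with the harmless edge case $J=\varnothing$.
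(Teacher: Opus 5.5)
Your proof is correct and follows essentially the same argument as the paper. Both use the fact that the LCM of a union is the LCM of the LCMs to get $f_{M_J}=f_J$, which gives the ``only if'' direction, and both note that $S_J=S_{J'}$ for the converse; your separate treatment of $J=\varnothing$ is harmless but unnecessary, since the union argument over the nonempty family $S_J$ already covers it.
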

\begin{proof}
Say that $M_J=M_{J'}$. $M_J$ is the union of a positive number of sets $K$ such that $f_K=f_J$, so, since the LCM of a union of sets is the LCM of the LCMs of those sets, $f_{M_J}$ is the LCM of many copies of $f_J$, and is thus itself $f_J$. Thus we also have $f_{J'}=f_{M_J'}=f_{M_J}$. Conversely, if $f_J=f_{J'}$, then $S_J=S_{J'}$, so $M_J=M_{J'}$.
\end{proof}
\begin{lemma}
$M_J=\{j\in[n]\ |\ f_j|f_J\}$.
\end{lemma}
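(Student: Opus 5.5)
We prove the two inclusions separately. Throughout we use one fact about least common multiples: if $K\subseteq L$ then $f_K\mid f_L$, and $f_{K\cup L}=\operatorname{lcm}(f_K,f_L)$. Both follow at once from Construction \ref{taylor}, since exponents of unions are maxima of exponents.

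First we show $M_J\subseteq\{j: f_j\mid f_J\}$. Take $j\in M_J$. By definition there is some $K\in S_J$ with $j\in K$. Since $\{j\}\subseteq K$, we get $f_j=f_{\{j\}}\mid f_K=f_J$.

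Next we show $\{j: f_j\mid f_J\}\subseteq M_J$. Suppose $f_j\mid f_J$ and set $K=J\cup\{j\}$. Then
\[
f_K=\operatorname{lcm}(f_J,f_j)=f_J,
\]
because $f_j$ divides $f_J$. Hence $K\in S_J$, and so $j\in K\subseteq M_J$.

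Neither direction is a genuine obstacle. The only point needing care is the second inclusion: we must exhibit a witness set in $S_J$ containing $j$, and the natural choice $J\cup\{j\}$ works. We should also note that $J\in S_J$, so $S_J$ is nonempty. This gives $J\subseteq M_J$, consistent with the argument in the proof of Lemma \ref{feb26}.
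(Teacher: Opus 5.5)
Your proof is correct and is essentially the paper's argument. Both use the same witnesses: $J\cup\{j\}\in S_J$ when $f_j\mid f_J$, and, for the reverse inclusion, some $K\in S_J$ containing $j$, which gives $f_j\mid f_K=f_J$.
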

\begin{proof}
If $f_j|f_J$, then $f_{J\cup\{j\}}=f_J$, so $j\in M_J$. If $j\in M_J$, then there exists some $K\ni j$ such that $f_j|f_K=f_J$.
\end{proof}
\end{definition}
\begin{remark}
In Example \ref{edgeex}, we have
\begin{align*}
f_J&=x_1x_2x_3x_4x_5x_6,\\
S_J&=\{\{1,3,5\},\{1,3,4,5\},\{1,2,4,5\},\{1,2,3,5\},\{1,2,3,4,5\}\},\text{ and}\\
M_J&=\{1,2,3,4,5\},\\
T_J(\boldsymbol{\mathit{f}})&=\underset{\vrule width0pt height4em depth0pt 5}{\overline{k}}\xrightarrow[\renewcommand*{\arraystretch}{.6}\scalebox{.7}{$\begin{bmatrix}-1\\1\\-1\end{bmatrix}$}]{}\underset{\vrule width0pt height4em depth0pt 4}{\overline{k}}^3\xrightarrow[\renewcommand*{\arraystretch}{.6}\scalebox{.7}{$\begin{bmatrix}1&0&-1\end{bmatrix}$}]{}\underset{\vrule width0pt height4em depth0pt 3}{\overline{k}}\ ,
\end{align*}
where we assign the three non-trivial vector spaces in $T_J(\boldsymbol{\mathit{f}})$ bases
\[\left(b_{\{1,2,3,4,5\}}\right), \left(b_{\{1,3,4,5\}},b_{\{1,2,4,5\}},b_{\{1,2,3,5\}}\right),\left(b_{\{1,3,5\}}\right),\]
respectively.
\end{remark}
\begin{proposition}
$T_J(\boldsymbol{\mathit{f}})$ is a subcomplex of $T(\boldsymbol{\mathit{f}})\otimes\overline{k}$, and furthermore $T(\boldsymbol{\mathit{f}})\otimes\overline{k}$ is the direct sum of $T_{M_J}(\boldsymbol{\mathit{f}})$ over all distinct $M_J$.
\end{proposition}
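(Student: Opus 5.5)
The plan has two parts: closure of each $T_J(\boldsymbol{\mathit{f}})$ under the differential, and a partition of the basis of $T(\boldsymbol{\mathit{f}})\otimes\overline{k}$ indexed by the sets $M_J$.

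\emph{Subcomplex.} By Construction \ref{taylork}, the differential of a basis element $b_K$ is a signed sum of those $b_{K\setminus\{k_i\}}$ with $f_{K\setminus\{k_i\}}=f_K$. If $K\in S_J$, every such term has $f_{K\setminus\{k_i\}}=f_K=f_J$, so $K\setminus\{k_i\}\in S_J$. Thus $\partial$ maps the span of $\{b_K: K\in S_J\}$ into itself, and the restricted differential is just the restriction of $\partial$. Hence $T_J(\boldsymbol{\mathit{f}})$ is a subcomplex of $T(\boldsymbol{\mathit{f}})\otimes\overline{k}$.

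\emph{Decomposition.} The basis $\{b_K : K\subseteq[n]\}$ of $T(\boldsymbol{\mathit{f}})\otimes\overline{k}$ is partitioned by the value of $f_K$. Each $K$ lies in $S_K$, and the sets $S_J$ are exactly the fibres of the map $K\mapsto f_K$, so two of them are either equal or disjoint. I would index these classes by the distinct $M_J$:
\begin{itemize}
\item By the first half of the proof of Lemma \ref{feb26}, $f_{M_J}=f_J$, so $S_{M_J}=S_J$ and $T_{M_J}(\boldsymbol{\mathit{f}})=T_J(\boldsymbol{\mathit{f}})$.
\item By Lemma \ref{feb26}, distinct sets $M_J$ correspond to distinct values $f_J$, hence to disjoint classes $S_{M_J}$.
\end{itemize}
Therefore, as graded vector spaces, $T(\boldsymbol{\mathit{f}})\otimes\overline{k}=\bigoplus T_{M_J}(\boldsymbol{\mathit{f}})$, with the sum taken over distinct $M_J$.

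Since each summand is a subcomplex by the first part, the differential respects this splitting. So the vector-space decomposition is a decomposition of complexes.

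The main obstacle is only bookkeeping. One must check that the summation condition in Construction \ref{taylork} (over terms with $f_J=f_{J\setminus\{j_i\}}$) really confines the image to $S_J$; this is exactly what the subcomplex step does. One must also keep in mind that $M_J$ itself lies in $S_J$, so indexing by $M_J$ is legitimate. No sign issues arise, since only the supports of the terms matter, not their coefficients.
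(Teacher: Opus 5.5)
Your proof is correct and follows essentially the same route as the paper: both partition the basis $\{b_K\}$ by the value of $f_K$ (indexed by $M_J$ via Lemma~\ref{feb26}) and read off from Construction~\ref{taylork} that $\partial b_K$ only involves terms $b_{K\setminus\{k_i\}}$ with $f_{K\setminus\{k_i\}}=f_K$, so the differential is block diagonal. Your version is slightly more explicit than the paper's, since you note $f_{M_J}=f_J$, which justifies $T_{M_J}(\boldsymbol{\mathit{f}})=T_J(\boldsymbol{\mathit{f}})$ and hence the indexing by the distinct $M_J$.
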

\begin{proof}
Recall that $T_{M_J}(\boldsymbol{\mathit{f}})$ is the restriction of $T(\boldsymbol{\mathit{f}})\otimes\overline{k}$ to the basis $b_K$ with $K\in S_J$. Note that for each $K\subseteq[n]$ there is exactly one set $S_J$ containing $K$, since the set of distinct $S_J$ is a partition of the sets $K\subseteq [n]$ by the values $f_K$. Thus it suffices to show that, if we consider the differential of $T(\boldsymbol{\mathit{f}})\otimes\overline{k}$ as a matrix by using the basis $\{b_K\ |\ K\subset [n]\}$, then whenever $M_J\neq M_{J'}$, which by Lemma \ref{feb26} is to say whenever $f_J=f_{J'}$, then the coefficient of this matrix from $b_J$ to $b_J$ is zero. This is clear from the description of the differential in Construction \ref{taylork}.
\end{proof}
\noindent We refer to the complexes $T_J(\boldsymbol{\mathit{f}})$ as \textit{Taylor subcomplexes} on $\boldsymbol{\mathit{f}}$ (with respect to ($x_1,\ldots,x_d$)).

The differential structures of these subcomplexes are those of the cohomologies of corresponding simplicial complexes. Let us be precise. Let $K_\Delta=M_K\,\backslash\,K$ and let $\Delta_J=\left\{K_\Delta\ \middle|\ K\in S_J\right\}$, noting that $K\in S_J$ if and only if $K_\Delta\in \Delta_J$ if and only if $M_J=M_K$.
\begin{lemma}
$\Delta_J$ is a simplicial complex.
\end{lemma}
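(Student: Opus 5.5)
We must show that $\Delta_J$ is closed under taking subsets. Throughout, the vertex set is $M_J$.

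\emph{Step 1: identify the faces.} We first claim that
\[\Delta_J=\{\,L\subseteq M_J \;:\; f_{M_J\setminus L}=f_J\,\}.\]
Take $K\in S_J$. Since $f_K=f_J$, Lemma \ref{feb26} gives $M_K=M_J$, so $K_\Delta=M_J\setminus K$. Conversely, let $L\subseteq M_J$ with $f_{M_J\setminus L}=f_J$. Put $K=M_J\setminus L$. Then $K\in S_J$, so again $M_K=M_J$, and therefore $K_\Delta=M_J\setminus K=L$. This proves the claim. Equivalently, the map $K\mapsto M_J\setminus K$ is a bijection from $S_J$ onto $\Delta_J$.

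\emph{Step 2: closure under subsets.} Let $L\in\Delta_J$ and $L'\subseteq L$, and set $K=M_J\setminus L$ and $K'=M_J\setminus L'$. Then $K\subseteq K'\subseteq M_J$. The LCM is monotone under inclusion of index sets, so
\[f_J=f_K \mid f_{K'} \mid f_{M_J}.\]
In the proof of Lemma \ref{feb26} we saw that $f_{M_J}=f_J$. Hence $f_{K'}=f_J$, so $K'\in S_J$, and by Step 1 we get $L'=K'_\Delta\in\Delta_J$.

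\emph{Step 3: nonemptiness.} The set $M_J$ itself lies in $S_J$, so $\varnothing=(M_J)_\Delta\in\Delta_J$. Thus $\Delta_J$ contains the empty face and is closed under subsets, so it is a simplicial complex on $M_J$.

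The only delicate point is Step 1, where one must check that $M_K$ really equals $M_J$ for every $K\in S_J$, so that all complements are taken inside the same ambient set $M_J$. Lemma \ref{feb26} supplies exactly this. After that, the argument is just monotonicity of the LCM, squeezed between $f_K$ and $f_{M_J}$.
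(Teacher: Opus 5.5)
Your proof is correct and follows the paper's argument: the core in both is the squeeze $f_J=f_K\mid f_{K'}\mid f_{M_J}=f_J$ for $K\subseteq K'\subseteq M_J$. Your Step 1 makes explicit, via Lemma~\ref{feb26}, the reduction to complements inside $M_J$ that the paper simply asserts as an equivalence, and your Step 3 adds the nonemptiness check that the paper leaves implicit.
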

\begin{proof}
We wish to show that if $K_\Delta\in \Delta_J$, then any subset of $K_\Delta$ is in $\Delta_J$. This is equivalent to saying that if $f_K=f_J$, then for any $K\subset L\subset M_J$ we have $f_L=f_J$. Since $L\supset K$ we have $f_J=f_K|f_L$, and since $L\subset M_J$ we have $f_L|f_{M_J}=f_J$, which together suffice as the LCMs in question are all those of monomials. 
\end{proof}
Consider the augmented chain complex $\widetilde{C}_\bullet(\Delta_J,\overline{k})$. Its $i$-th entry has basis the $i$-simplices $K_\Delta$ of $\Delta_J$, interpreting the trivial simplex as having dimension $-1$. As such, the $i$-th entry of the augmented cochain complex $\widetilde{C}^{\bullet}(\Delta_J,\overline{k})$ is the dual $\operatorname{Hom}(\widetilde{C}_i(\Delta_J,\overline{k}),\overline{k})$, and thus we can assign it a basis consisting of the resulting duals of the $i$-simplices of $\Delta_J$: namely, the dual $(K_{\Delta})^\vee$ of an $i$-simplex $K_\Delta$ sends $K_\Delta$ to 1 and the other $i$-simplices of $\Delta_J$ to zero. We use $c_K'$ to refer to $(K_\Delta)^\vee$. Thus, there is a basis of the $i$-th entry of $\widetilde{C}^\bullet\left(\Delta_J,\overline{k}\right)$ consisting of the terms $c_K'$ over all $K$ such that $K_\Delta$ is an $i$-simplex of $\Delta_J$. We call $\Delta_J$ the \textit{monomial subcomplex} of $J$.
\begin{remark}
In Example \ref{edgeex},
\[\Delta_J=\{\varnothing,\{2\},\{3\},\{4\},\{2,4\}\}=\twoseg{2}{4}\dotseg{3}.\]
The bases of the entries of $\widetilde{C}^\bullet(\Delta_J,\overline{k})$ are as follows:
\begin{align*}
\widetilde{C}^{-1}(\Delta_J,\overline{k}):&\,\left(\varnothing^\vee\right)\\
\widetilde{C}^0(\Delta_J,\overline{k}):&\,\left(\{2\}^\vee,\{3\}^\vee,\{4\}^\vee\right)\\
\widetilde{C}^1(\Delta_J,\overline{k}):&\,\left(\{2,4\}^\vee\right),
\end{align*}
and with this basis, we have
\[\widetilde{C}^\bullet(\Delta_J,\overline{k})=\underset{\vrule width0pt height4em depth0pt -1}{\overline{k}}\xrightarrow[\renewcommand*{\arraystretch}{.6}\scalebox{.7}{$\begin{bmatrix}1\\1\\1\end{bmatrix}$}]{}\underset{\vrule width0pt height4em depth0pt 0}{\overline{k}}^3\xrightarrow[\renewcommand*{\arraystretch}{.6}\scalebox{.7}{$\begin{bmatrix}1&0&-1\end{bmatrix}$}]{}\underset{\vrule width0pt height4em depth0pt 1}{\overline{k}}\ .\]
\end{remark}
If $J$ is an ordered set we let $c_J'=\operatorname{sgn}(J)c_{\operatorname{sort}(J)}'$. Furthermore, let $\operatorname{ksgn}(J)$ (the \textit{cochain complex sign} of $J$) be $-1$ to the power of the number of elements of $J$ at even positions of $\operatorname{sort}(M_J)$ and let
\begin{align*}
\operatorname{ksgn}(j,J)&=\operatorname{ksgn}(J)\operatorname{ksgn}(\{j\}J)\\
c_J&=\operatorname{ksgn}(J)b_J,\\
\end{align*}
\begin{lemma}\label{ksgnrel}
Fix some $J\in[n]$. If for all $i\in\mathbb{Z}$ we let $S_{J,i}$ be an ordered set consisting of the elements of $S_J$ which have size $i$, and choose bases $(c_K)|_{K\in S_{J,i}}$ and $(c_K')|_{K\in S_{J,i}}$ for $(T_J(\boldsymbol{\mathit{f}}))_{i}$ and $\widetilde{C}^{|M_J|-i+1}\left(\Delta_J,\overline{k}\right)$ respectively for all $i\in\mathbb{Z}$, then fixing any $i_0\in \mathbb{Z}$ the matrices
\[\partial_{i_0}(T_J(\boldsymbol{\mathit{f}}))\quad\text{and}\quad\partial^{|M_J|-i_0-1}\left(\widetilde{C}^\bullet\left(\Delta_J,\overline{k}\right)\right)\]
with $\overline{k}$-entries are identical.
\end{lemma}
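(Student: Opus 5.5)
The plan is to compare the two matrices entry by entry. First, I would make explicit the matching of basis elements for $T_J(\boldsymbol{\mathit{f}})$ and $\widetilde{C}^\bullet(\Delta_J,\overline{k})$ that the statement implicitly uses. Write $M=M_J$. A basis element $c_K$ with $K\in S_J$ and $|K|=i$ corresponds to $c_K'=(K_\Delta)^\vee$, where $K_\Delta=M\setminus K$ is a simplex with $|M|-i$ vertices. This simplex lies in $\Delta_J$ by the lemma that $\Delta_J$ is a simplicial complex, so the supports on both sides are indexed by the same family $S_J$. I would first check that this matching is compatible with the degree shift in the statement: the cochain differential increases simplex dimension by one, while the Taylor differential decreases $|K|$ by one. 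So both maps send the $K$-indexed basis vectors to the $K\setminus\{j\}$-indexed ones, with $j\in K$.

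Next I would identify the nonzero entries. By Construction \ref{taylork}, $\partial(b_K)$ has a nonzero coefficient on $b_{K\setminus\{j\}}$ exactly when $f_{K\setminus\{j\}}=f_K$, that is, when $K\setminus\{j\}\in S_J$. On the cochain side, $\delta\bigl((K_\Delta)^\vee\bigr)$ is a signed sum of $(\sigma)^\vee$ over simplices $\sigma=K_\Delta\cup\{j\}\in\Delta_J$ with $j\notin K_\Delta$. Such a $\sigma$ is precisely $(K\setminus\{j\})_\Delta$ with $j\in K$, and $\sigma\in\Delta_J$ if and only if $K\setminus\{j\}\in S_J$. Hence the two zero patterns agree, and every nonzero entry is $\pm1$ on both sides.

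The main step, and the only real obstacle, is the sign comparison. Fix $j\in K$. Let $p$ be the position of $j$ in $\operatorname{sort}(M)$ and $q$ its position in $\operatorname{sort}(K)$.
\begin{itemize}
\item The Taylor coefficient of $b_{K\setminus\{j\}}$ in $\partial(b_K)$ is $(-1)^{q-1}$.
\item Passing to the basis $c_K=\operatorname{ksgn}(K)b_K$ multiplies this entry by $\operatorname{ksgn}(K)\operatorname{ksgn}(K\setminus\{j\})$. Since $\operatorname{ksgn}$ counts elements at even positions of $\operatorname{sort}(M)$, this factor is $-1$ exactly when $p$ is even, i.e. it equals $(-1)^{p-1}$.
\item The coboundary coefficient is $(-1)^{r-1}$ (or $(-1)^r$, depending on the orientation convention), where $r$ is the position of $j$ in $\operatorname{sort}(K_\Delta\cup\{j\})$.
\item The elements of $M$ below $j$ number $p-1$; of these, $q-1$ lie in $K$, so $r-1=(p-1)-(q-1)=p-q$.
\end{itemize}
Then $(-1)^{q-1}(-1)^{p-1}=(-1)^{p-q}=(-1)^{r-1}$, so the entries agree.

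I would finish by checking that the sign normalizations $b_J=\operatorname{sgn}(J)b_{\operatorname{sort}(J)}$ and $c_J'=\operatorname{sgn}(J)c_{\operatorname{sort}(J)}'$ are used consistently, so that working with sorted representatives costs no generality. I would also confirm which coboundary sign convention makes the parity above come out exactly, rather than off by a global sign, testing it against the worked Example \ref{edgeex} where the matrices $\begin{bmatrix}1&0&-1\end{bmatrix}$ visibly coincide. If a global sign discrepancy appears, it depends only on $i_0$, and I would absorb it by fixing the orientation convention of $\widetilde{C}^\bullet$ accordingly.
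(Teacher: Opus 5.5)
Your proposal is correct and is essentially the paper's proof. Both compare entries for $j\in K$ with $K,K\setminus\{j\}\in S_J$, multiply the Taylor sign $(-1)^{\#\{p\in K:\,p<j\}}$ by $\operatorname{ksgn}(K)\operatorname{ksgn}(K\setminus\{j\})=(-1)^{\#\{p\in M_J:\,p<j\}}$, and recognize the product $(-1)^{\#\{p\in K_\Delta:\,p<j\}}$ as the coboundary coefficient.

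That coefficient is exactly the standard convention $[\sigma:\tau]=(-1)^{\#\{v\in\tau:\,v<j\}}$ for $\sigma=\tau\cup\{j\}$, so no global-sign adjustment is needed, and none would be admissible for a claim of identical matrices. Also, do not calibrate against Example \ref{edgeex}: its displayed $T_J$ matrices are in the $b_K$ basis rather than the $c_K$ basis, so agreement there does not test the lemma.
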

\noindent Note that we do not draw an isomorphism between $T_J(\boldsymbol{\mathit{f}})$ and $\mathsf{\Sigma}^{|M_J|-1}\widetilde{C}^\bullet\left(\Delta_J,\overline{k}\right)$ directly, as such a shift would introduce additional unwanted signs arising from the use of the suspension operator. However, if we were to shift $\widetilde{C}^\bullet\left(\Delta_J,\overline{k}\right)$ by na\"{i}vely re-indexing its entries instead, we could construct a true isomorphism.
\begin{proof}[Proof of Lemma \ref{ksgnrel}]
Fix some $j\in J$. It suffices to show that, whenever $J$ and $J\,\backslash\,\{j\}$ lie in $S_J$, that is, whenever $f_J=f_{J\,\backslash\,\{j\}}$, the coefficient from $c_J$ to $c_{J\,\backslash\,\{j\}}$ is exactly that from $c_J'$ to $c_{J\,\backslash\,\{j\}}'$. We begin with the first of these coefficients. The coefficient from $b_J$ to $b_{J\,\backslash\,\{j\}}$ in $T_J(\boldsymbol{\mathit{f}})$ is $(-1)^{|\{p\in J|\,p<j\}|}$ by definition, so the coefficient from $c_J$ to $c_{J\,\backslash\,\{j\}}$ is
\begin{align*}
(-1)^{|\{p\in J|\,p<j\}|}\operatorname{ksgn}(J)\operatorname{ksgn}(J\,\backslash\,\{j\})&=(-1)^{|\{p\in J|\,p<j\}|}(-1)^{|\{p\in M_J|p<j\}|}\\
&=(-1)^{|\{p\in J_\Delta|\,p<j\}|}\\
&=(-1)^{|\{p\in (J\,\backslash\,\{j\})_\Delta|\,p<j\}|},
\end{align*}
which is by definition the coefficient from $c_J'$ to $c_{J\,\backslash\,\{j\}}'$ in $\widetilde{C}^\bullet\left(\Delta_J,\overline{k}\right)$.
\end{proof}
\begin{remark}
In Example \ref{edgeex}, we have $\operatorname{ksgn}(\{1,2,3,5\})=-1$: there is a single element of the set \{1,2,3,5\}, namely 2, which has even index in the ordered set $\operatorname{sort}(M_{\{1,2,3,5\}})=(1,2,3,4,5)$. We have
\begin{gather*}
\operatorname{ksgn}(\{1,3,5\})=1,\quad\operatorname{ksgn}(\{1,3,4,5\})=-1,\quad\operatorname{ksgn}(\{1,2,4,5\})=1,\\
\operatorname{ksgn}(\{1,2,3,5\})=-1,\quad\operatorname{ksgn}(\{1,2,3,4,5\})=1.
\end{gather*}
This is reflected in the matrices we already calculated for $T_J(\boldsymbol{\mathit{f}})$ and $\widetilde{C}^\bullet(\Delta_J,\overline{k})$: for some $K\in S_J,j\in K$, the matrix entry taking $b_K$ to $b_{K\,\backslash\,\{j\}}$ is that taking $c_K$ to $c_{K\,\backslash\,\{j\}}$ times $\operatorname{ksgn}(K)\operatorname{ksgn}(K\,\backslash\,\{j\})$.
\end{remark}
\noindent Let us explicitly record the action of our DG multiplication structure on the basis $\{c_J\}$. When non-zero, we have
\[b_jc_J=\begin{cases}\operatorname{ksgn}(j,J)c_{\{j\}J}=\operatorname{ksgn}(j,J)\operatorname{sgn}(\{j\}J)c_{\{j\}\cup J}&\text{if }f_{\{j\}}f_J=f_{\{j\}\cup J},\\
0&\text{otherwise.}
\end{cases}\]
Note that if $f_{\{j\}}f_J=f_{\{j\}\cup J}$, then $f_J\neq f_{\{j\}\cup J}$, which means that this multiplication will never have entries within one of our summands of $T(\boldsymbol{\mathit{f}})$.
\subsection{A double complex structure on \texorpdfstring{$\boldsymbol{\widehat{\mathcal{C}}_{E_a}'(F)}$}{C subscript E subscript a prime of F} for \texorpdfstring{$\boldsymbol{\mathit{f}}$}{f} equidegree}
We begin with some further notation for concision. Let $a_J=\prod_{j\in J}a_j$ and define $x_J$ similarly, where in the latter $J$ may be a multiset. Second, we may omit $(\boldsymbol{\mathit{f}})$ from $T_J(\boldsymbol{\mathit{f}})$ when $\boldsymbol{\mathit{f}}$ is clear. Third, henceforth, we will let underlined strings of hex digits (or integers generally, when their delineation is clear) refer to ordered sets (e.g. $\underline{123}:=(1,2,3)$).

The differential $d_a$ on $\widehat{\mathcal{C}}_{E_a}'(F)$, which is identified with $T(\boldsymbol{\mathit{f}})\otimes_Q\overline{k}$, is the sum of the following:
\begin{itemize}
\item the maps $\partial T_{M_J}(\boldsymbol{\mathit{f}})$ over all distinct $M_J$, our \textit{intra-Taylor-subcomplex maps},
\item and the maps
\begin{align*}
a_jb_j&:T_{M_J}(\boldsymbol{\mathit{f}})\to T_{\{j\}\cup M_J}(\boldsymbol{\mathit{f}}),\\
&\qquad c_K\mapsto a_jb_jc_K=\operatorname{ksgn}(j,J)c_{\{j\}J}
\end{align*}
over all distinct $j,M_J$ such that $\operatorname{gcd}(f_{\underline{j}},f_{M_J})=1$,
our \textit{inter-Taylor-subcomplex maps}.
\end{itemize}
Recall the following
\begin{definition}[{Taylor graph, \cite[Definition 6.3]{embdef}}]
The \textit{Taylor graph} associated with $\boldsymbol{\mathit{f}}$ has vertices the subsets of $[n]$ and directed edges from $J$ to $K$ for $J,K\subset[n]$ exactly when the coefficient of $d_a$ taking $b_J$ to $b_K$ is non-zero.
\end{definition}
\noindent Our existing examples have quite large Taylor graphs, so we use the smaller example from \cite[Example 6.10]{embdef}:
\begin{example}[{\cite[Example 6.10]{embdef}}]\label{taylorg}
Let $n=4$ and
\[\boldsymbol{\mathit{f}} = (x_1x_2,x_2x_3,x_3x_4,x_4x_5).\]
The Taylor graph of $\boldsymbol{\mathit{f}}$ is as follows:
\begin{figure}[H]
    \begin{tikzpicture}[
    scale = 0.9,
  node distance=1cm and 1.5cm,
  every node/.style={circle, draw, minimum size=8mm},
  ->,
  baseline=(current bounding box.center)
]

\node (1234) at (0,0) {\underline{1234}};

\node (123)  at (2,2.5)  {\underline{123}};
\node (124)  at (2,1)  {\underline{124}};
\node (134)  at (2,-.5) {\underline{134}};
\node (234)  at (2,-2) {\underline{234}};

\node (12) at (4,4)  {\underline{12}};
\node (13) at (4,2.75)    {\underline{13}};
\node (14) at (4,1.25)  {\underline{14}};
\node (23) at (4,-.25)    {\underline{23}};
\node (24) at (4,-1.75) {\underline{24}};
\node (34) at (4,-3)   {\underline{34}};

\node (1) at (6,2.5)  {\underline{1}};
\node (2) at (6,1)    {\underline{2}};
\node (3) at (6,-.5)  {\underline{3}};
\node (4) at (6,-2)   {\underline{4}};

\node (empty) at (8,0) {$\varnothing$};

\foreach \x in {124,134} {
  \draw (1234) -- (\x);
}

\foreach \x in {1,2,3,4} {
  \draw (empty)-- (\x);
}

\draw (2)--(24);
\draw (4)--(24);
\draw (4)--(14);
\draw (1)--(14);
\draw (3)--(13);
\draw (1)--(13);
\draw (123)--(13);
\draw (12)--(124);
\draw(34)--(134);
\draw (234)--(24);
\end{tikzpicture}
\caption{Taylor graph for $(x_1x_2,x_2x_3,x_3x_4,x_4x_5)$, cf. \cite[Figure 1]{embdef}}
\label{figone}
\end{figure}
\end{example}

\noindent Note that the non-zero coefficients of $d_a$ taking some $b_J$ to some $b_K$ are each one of two types:
\begin{itemize}
\item Those for which $J$ is the union of $K$ and some element—those given by our intra-Taylor-subcomplex maps. If the vertices of the Taylor graph are placed in columns according to their entries in the Taylor complex, as above, these coefficients are those corresponding to right-facing arrows, which point one unit to the right.
\item Those for which $K$ is the union of $J$ and some element. These are those given by our inter-Taylor-subcomplex maps. If the vertices of the Taylor graph are placed in columns according to their entries in the Taylor complex, as above, these coefficients are those corresponding to left-facing arrows, which point one unit to the left.
\end{itemize}
The subsequent work observes that if we can assign to each basis vector $b_J$ an integer such that every edge in the Taylor graph has the integer assigned to the target one less than the edge assigned to the source, then we can construct a complex whose differential is given by $d_a$ and whose $i$-th entry has basis those vectors assigned $i$. That is, if we can assign to our Taylor graph a \textit{weak grading}, then we will be able to construct such a complex:
\begin{definition}
A \textit{weak grading} (cf. \cite[Section 3]{weakgrading}, which provides a definition of a \textit{weakly-graded} poset analogous to our definition here) of a graph is an assignment of an integer weight to each vertex such that each edge has the weight of its target one less than that of its source. We call such a graph \textit{weakly gradable}.
\end{definition}
\begin{lemma}\label{intermediatewg}
If our Taylor graph has a weak grading $\Sigma$, then $d_a$ begets a chain complex differential on the graded vector space with basis $\{b_J\ |\ J\subset[n]\}$ and for which $b_J$ is assigned the grade $\Sigma_J$ given to it by $\Sigma$.
\end{lemma}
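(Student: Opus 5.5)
The plan is to define the graded space and show that two things hold: $d_a$ has degree $-1$ with respect to it, and $d_a^2=0$ on $F\otimes_Q\overline{k}$.

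First, set $C_i$ to be the $\overline{k}$-span of the $b_J$ with $\Sigma_J=i$. Then $F\otimes_Q\overline{k}=\bigoplus_i C_i$ as a vector space, since the $b_J$ form a basis and each is assigned exactly one weight.

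Second, for degree $-1$: by the definition of the Taylor graph, the coefficient of $d_a$ from $b_J$ to $b_K$ is nonzero only when there is an edge $J\to K$. Since $\Sigma$ is a weak grading, any such edge has $\Sigma_K=\Sigma_J-1$. Hence $d_a(C_i)\subseteq C_{i-1}$.

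Third, and this is the main point, we need $d_a^2=0$. Write $d_a=\partial+\delta_a$, where $\partial$ is the Taylor differential tensored with $\overline{k}$ and $\delta_a=\sum_i a_i\,b_i\cdot(-)$ is left multiplication by $\sum_i a_ib_i$. Then
\[
d_a^2=\partial^2+(\partial\delta_a+\delta_a\partial)+\delta_a^2 .
\]
We check the three terms in turn.
\begin{itemize}
\item $\partial^2=0$, since $T(\boldsymbol{\mathit{f}})\otimes\overline{k}$ is a complex.
\item $\delta_a^2=\sum_{i,j}a_ia_j\,b_ib_j=0$. The diagonal terms vanish because $b_i\cdot b_i=0$ (the index sets intersect). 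The off-diagonal terms cancel in pairs because $b_ib_j=-b_jb_i$ by the sign rule $\operatorname{sgn}(JK)$.
\item $\partial\delta_a+\delta_a\partial$ acts on $x$ by $\sum_i a_i\bigl(\partial(b_i x)+b_i\,\partial x\bigr)$. Since $T$ is a DG algebra, the Leibniz rule gives $\partial(b_ix)=\partial(b_i)x-b_i\partial(x)$, so this equals $\sum_i a_i\,\partial(b_i)\,x$. Now $\partial(b_i')=f_i$ in $T$, and $f_i\in\mathfrak m$ because the presentation is minimal (generators have degree at least $2$, or $I\subseteq\mathfrak m^2$). So $\partial(b_i)=0$ after tensoring with $\overline{k}$, and this term vanishes.
\end{itemize}
Therefore $d_a^2=0$ on all of $F\otimes_Q\overline{k}$.

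Finally, combining the two: $(C_\bullet,d_a)$ is a $\mathbb{Z}$-graded chain complex whose underlying vector space, with its total map, is exactly $\widehat{\mathcal{C}}_{E_a}'(F)$.

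The only subtle step is the Leibniz computation. The argument relies on $d_a^2=0$ being a consequence of the DG structure, not of the grading; the grading only repackages the existing $\mathbb{Z}/2$-graded complex of Proposition \ref{prop_supp_Chat} as a $\mathbb{Z}$-graded one. I would cite the vanishing of $\partial(b_i)$ modulo $\mathfrak m$ explicitly from minimality of the presentation.
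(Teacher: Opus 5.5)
Your proposal is correct and is essentially the paper's argument: the edges of the Taylor graph are precisely the possible nonzero entries of $d_a$, so a weak grading forces $d_a$ to lower the grade by exactly one, and the square-zero property is inherited rather than newly proved. The only difference is that the paper takes $d_a^2=0$ as already known from Proposition \ref{prop_supp_Chat} (it says so explicitly in the proof of Theorem \ref{doublecomplex}), whereas you re-derive it from the Leibniz rule and $\partial(b_i)=f_i\in\mathfrak m$. That is a valid optional check, and for it $I\subseteq\mathfrak m$ already suffices.
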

Letting $B_i$ be a basis for the vector space assigned grade $i\in\mathbb{Z}$ under this lemma, our ``chain complex differential'' reveals nothing more than that $d_a$ can be described as an automorphism on $\widehat{\mathcal{C}}_{E_a}'(T(\boldsymbol{\mathit{f}}))$ expressible as follows (thanks to our choice of partition $B_i$):
\[
\begin{blockarray}{ccccccc}
& \cdots & B_2& B_1&B_0&B_{-1}&\cdots\\
\begin{block}{c[cccccc]}
 \vdots & \ddots& \vdots&\vdots&\vdots&\vdots&\ddots \\
 B_2 & \cdots& 0&0&0&0&\cdots \\
 B_1 & \cdots&*&0&0&0&\cdots \\
 B_0 & \cdots&0& *&0&0&\cdots \\
 B_{-1} & \cdots& 0&0&*&0&\cdots \\
 \vdots & \ddots& \vdots&\vdots&\vdots&\vdots&\ddots \\
\end{block}
\end{blockarray}.
\]
\begin{example}
Continuing Example \ref{taylorg}, we can weakly grade our Taylor graph as follows, assigning to each subset of $[n]$ the integer heading the column in which it lies:
\begin{figure}[H]
    \begin{tikzpicture}[
    scale = 0.9,
  node distance=1cm and 1.5cm,
  every node/.style={circle, draw, minimum size=8mm},
  ->,
  baseline=(current bounding box.center)
]

\node[style={draw=none}] at (0,5.25) {\scriptsize{6}};
\node[style={draw=none}] at (2,5.25) {\scriptsize{5}};
\node[style={draw=none}] at (4,5.25) {\scriptsize{4}};
\node[style={draw=none}] at (6,5.25) {\scriptsize{3}};
\node[style={draw=none}] at (8,5.25) {\scriptsize{2}};
\node[style={draw=none}] at (10,5.25) {\scriptsize{1}};
\node (1234) at (8,0) {\underline{1234}};

\node (123)  at (2,4)  {\underline{123}};
\node (124)  at (10,1)  {\underline{124}};
\node (134)  at (10,-.5) {\underline{134}};
\node (234)  at (2,-3.5) {\underline{234}};

\node (12) at (8,1.5)  {\underline{12}};
\node (13) at (4,2.75)    {\underline{13}};
\node (14) at (4,1.25)  {\underline{14}};
\node (23) at (4,-.25)    {\underline{23}};
\node (24) at (4,-1.75) {\underline{24}};
\node (34) at (8,-1.5)   {\underline{34}};

\node (1) at (2,2.5)  {\underline{1}};
\node (2) at (2,1)    {\underline{2}};
\node (3) at (2,-.5)  {\underline{3}};
\node (4) at (2,-2)   {\underline{4}};

\node (empty) at (0,0) {$\varnothing$};

\foreach \x in {124,134} {
  \draw (1234) -- (\x);
}

\foreach \x in {1,2,3,4} {
  \draw (empty)-- (\x);
}

\draw (2)--(24);
\draw (4)--(24);
\draw (4)--(14);
\draw (1)--(14);
\draw (3)--(13);
\draw (1)--(13);
\draw (123)--(13);
\draw (12)--(124);
\draw(34)--(134);
\draw (234)--(24);
\end{tikzpicture}

\vspace{0.5em}
\caption{Weak grading of the Taylor graph for $(x_1x_2,x_2x_3,x_3x_4,x_4x_5)$, cf. \cite[Figure 1]{embdef}}
\end{figure}
\end{example}
\noindent Finding a weak grading of a Taylor graph is not necessarily simple, so we will construct a new, simpler graph through which a weak grading of our Taylor graph can be built:

\begin{definition}
For some set of monomial generators of an ideal, partition the vertices of its Taylor graph by LCM: that is, for each $J\subseteq[n]$, place the vertex corresponding to $J$ in the subset containing $M_J$. The \textit{Taylor subcomplex graph} is the quotient of this Taylor graph by this partition. We use $T_{M_J}(\boldsymbol{\mathit{f}})$ to label the vertex corresponding to the subset containing $M_J$ (so that it shares a name with the Taylor subcomplex to which it corresponds), though we may occasionally refer to this vertex as simply $T_J$.
\end{definition}

\begin{lemma}\label{finalwg}
If our Taylor subcomplex graph has a weak grading $\Sigma$, then $d_a$ begets a chain complex differential on the graded vector space with basis $\{b_J\ |\ J\subset[n]\}$ and for which $b_J$ is assigned the grade $|J|+2\Sigma_{T_J}$.
\end{lemma}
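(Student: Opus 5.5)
The plan is to reduce Lemma \ref{finalwg} to Lemma \ref{intermediatewg}. Define $\Sigma'_J:=|J|+2\Sigma_{T_J}$ for every vertex $J$ of the Taylor graph, and show that $\Sigma'$ is a weak grading of the Taylor graph. Then Lemma \ref{intermediatewg}, applied to $\Sigma'$, gives exactly the claimed chain complex differential, with $b_J$ in grade $|J|+2\Sigma_{T_J}$.

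To check the weak grading property, I will use the dichotomy of nonzero coefficients of $d_a$ recorded after Example \ref{taylorg}. Every edge $J\to K$ of the Taylor graph is of one of two kinds.

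\emph{Intra-Taylor-subcomplex edges.} Here $K=J\setminus\{j\}$ with $f_K=f_J$. By Lemma \ref{feb26}, $M_K=M_J$, so $J$ and $K$ lie in the same class of the partition and $\Sigma_{T_K}=\Sigma_{T_J}$. Since $|K|=|J|-1$, we get $\Sigma'_K=\Sigma'_J-1$.

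\emph{Inter-Taylor-subcomplex edges.} Here $K=\{j\}\cup J$ with $j\notin J$ and $f_{\{j\}}f_J=f_{\{j\}\cup J}$. As noted after Lemma \ref{ksgnrel}, this forces $f_K\neq f_J$, since $f_{\{j\}}$ is a monomial of degree at least two in a minimal regular presentation. Hence, by Lemma \ref{feb26}, $M_K\neq M_J$, and the edge $J\to K$ descends to an edge $T_{M_J}\to T_{M_K}$ between distinct vertices of the Taylor subcomplex graph. The weak grading $\Sigma$ then gives $\Sigma_{T_K}=\Sigma_{T_J}-1$. Since $|K|=|J|+1$, we get $\Sigma'_K=|J|+1+2\Sigma_{T_J}-2=\Sigma'_J-1$.

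Thus every edge lowers $\Sigma'$ by exactly one, so $\Sigma'$ is a weak grading of the Taylor graph.

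The only real subtlety is how the quotient graph is interpreted. Intra-subcomplex edges become self-loops in the quotient, and no graph with a loop admits a weak grading. So the statement must be read with the Taylor subcomplex graph having edges only between distinct classes, i.e.\ only the images of inter-Taylor-subcomplex edges. I will state this convention explicitly. The argument above only ever uses the weak grading condition on edges coming from inter-Taylor-subcomplex maps, which is consistent with that reading.
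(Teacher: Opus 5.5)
Your proof is correct and follows the paper's argument: reduce to Lemma \ref{intermediatewg}, then check that $J\mapsto |J|+2\Sigma_{T_J}$ drops by exactly one along both intra- and inter-Taylor-subcomplex edges. The paper also relies implicitly on the two points you make explicit, namely reading the Taylor subcomplex graph without self-loops and noting that inter edges join distinct classes because $f_{\{j\}\cup J}\neq f_J$; its figures and proof assume both without comment.
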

\begin{proof}

By Lemma \ref{intermediatewg} it suffices to show that $J\mapsto|J|+2\Sigma_{T_J}$ is a weak grading of our Taylor graph. Consider an edge from $J$ to $J'$ in our Taylor graph. If it is given by an intra-Taylor-subcomplex map, then $|K|=|K'|+1$ and $\Sigma_{T_J}=\Sigma_{T_{J'}}$. If it is given by an inter-Taylor-subcomplex map, then $|K|=|K'|-1$ and $\Sigma_{T_J}=\Sigma_{T_{J'}}+1$. In either case we have
\[|J'|+2\Sigma_{T_{J'}}=|J|+2\Sigma_{J}-1,\]
which is to say that $J\mapsto|J|+2\Sigma_{T_J}$ is a weak grading of our Taylor graph, completing our proof.
\end{proof}
\noindent We call the chain complex resulting from this lemma $T^\Sigma(\boldsymbol{\mathit{f}},a)$.

\begin{example}\label{taylorsd}
Continuing Example \ref{taylorg}, our partition of vertices preceding our Taylor subcomplex graph would be as follows:
\begin{figure}[H]
    \begin{tikzpicture}[
    scale = 1,
  node distance=1cm and 1.5cm,
  every node/.style={circle, draw, minimum size=8mm},
  ->,
  baseline=(current bounding box.center),outer sep=0pt
]

\node (1234) at (0,0) {\underline{1234}};

\path (1234.east);
\pgfgetlastxy{\xcoord}{\ycoord}
\path (1234.center);
\pgfgetlastxy{\xc}{\yc}
\pgfmathsetmacro{\temp}{sqrt((\xcoord-\xc)^2 + (\ycoord-\yc)^2)}
\expandafter\edef\csname noderadius1234\endcsname{\temp}

\node (123)  at (2,2.5)  {\underline{123}};

\path (123.east);
\pgfgetlastxy{\xcoord}{\ycoord}
\path (123.center);
\pgfgetlastxy{\xc}{\yc}
\pgfmathsetmacro{\temp}{sqrt((\xcoord-\xc)^2 + (\ycoord-\yc)^2)}
\expandafter\edef\csname noderadius123\endcsname{\temp}

\node (124)  at (2,1)  {\underline{124}};

\path (124.east);
\pgfgetlastxy{\xcoord}{\ycoord}
\path (124.center);
\pgfgetlastxy{\xc}{\yc}
\pgfmathsetmacro{\temp}{sqrt((\xcoord-\xc)^2 + (\ycoord-\yc)^2)}
\expandafter\edef\csname noderadius124\endcsname{\temp}

\node (134)  at (2,-.5) {\underline{134}};

\path (134.east);
\pgfgetlastxy{\xcoord}{\ycoord}
\path (134.center);
\pgfgetlastxy{\xc}{\yc}
\pgfmathsetmacro{\temp}{sqrt((\xcoord-\xc)^2 + (\ycoord-\yc)^2)}
\expandafter\edef\csname noderadius134\endcsname{\temp}

\node (234)  at (2,-2) {\underline{234}};

\path (234.east);
\pgfgetlastxy{\xcoord}{\ycoord}
\path (234.center);
\pgfgetlastxy{\xc}{\yc}
\pgfmathsetmacro{\temp}{sqrt((\xcoord-\xc)^2 + (\ycoord-\yc)^2)}
\expandafter\edef\csname noderadius234\endcsname{\temp}

\node (12) at (4,4)  {\underline{12}};
\node (13) at (4,2.75)    {\underline{13}};

\path (13.east);
\pgfgetlastxy{\xcoord}{\ycoord}
\path (13.center);
\pgfgetlastxy{\xc}{\yc}
\pgfmathsetmacro{\temp}{sqrt((\xcoord-\xc)^2 + (\ycoord-\yc)^2)}
\expandafter\edef\csname noderadius13\endcsname{\temp}

\node (14) at (4,1.25)  {\underline{14}};
\node (23) at (4,-.25)    {\underline{23}};
\node (24) at (4,-1.75) {\underline{24}};

\path (24.east);
\pgfgetlastxy{\xcoord}{\ycoord}
\path (24.center);
\pgfgetlastxy{\xc}{\yc}
\pgfmathsetmacro{\temp}{sqrt((\xcoord-\xc)^2 + (\ycoord-\yc)^2)}
\expandafter\edef\csname noderadius24\endcsname{\temp}

\node (34) at (4,-3)   {\underline{34}};

\node (1) at (6,2.5)  {\underline{1}};
\node (2) at (6,1)    {\underline{2}};
\node (3) at (6,-.5)  {\underline{3}};
\node (4) at (6,-2)   {\underline{4}};

\node (empty) at (8,0) {$\varnothing$};

\foreach \x in {124,134} {
  \draw (1234) -- (\x);
}

\foreach \x in {1,2,3,4} {
  \draw (empty)-- (\x);
}

\draw (2)--(24);
\draw (4)--(24);
\draw (4)--(14);
\draw (1)--(14);
\draw (3)--(13);
\draw (1)--(13);
\draw (123)--(13);
\draw (12)--(124);
\draw(34)--(134);
\draw (234)--(24);

  \pgfmathsetmacro{\rbig}{\csname noderadius1234\endcsname+5pt}
  \pgfmathsetmacro{\rsmall}{\csname noderadius124\endcsname+5pt}
  \pgfmathsetmacro{\rmed}{\csname noderadius134\endcsname+5pt}
  \pgfmathsetmacro{\rtwofour}{\csname noderadius24\endcsname+5pt}
  \pgfmathsetmacro{\rthree}{\csname noderadius234\endcsname+5pt}
  \pgfmathsetmacro{\ronethree}{\csname noderadius13\endcsname+5pt}
  \pgfmathsetmacro{\rtwo}{\csname noderadius123\endcsname+5pt}
  \path let
  \p1 = (124)
  in node[draw=none,circle,minimum size=2*\rsmall pt] (small) at (\x1,\y1) {};
  \path let
  \p1 = (1234)
  in node[draw=none,circle,minimum size=2*\rbig pt] (big) at (\x1,\y1) {};
  \path let
  \p1 = (134)
  in node[draw=none,circle,minimum size=2*\rmed pt] (med) at (\x1,\y1) {};
  \path let
  \p1 = (234)
  in node[draw=none,circle,minimum size=2*\rthree pt] (mythree) at (\x1,\y1) {};
  \path let
  \p1 = (24)
  in node[draw=none,circle,minimum size=2*\rtwofour pt] (twofour) at (\x1,\y1) {};
  \path let
  \p1 = (123)
  in node[draw=none,circle,minimum size=2*\rtwo pt] (mytwo) at (\x1,\y1) {};
  \path let
  \p1 = (13)
  in node[draw=none,circle,minimum size=2*\ronethree pt] (onethree) at (\x1,\y1) {};
  \coordinate (c) at (barycentric cs:big=-\csname noderadius124\endcsname,small=\csname noderadius1234\endcsname);
  \coordinate (d) at (barycentric cs:med=-\csname noderadius1234\endcsname,big=\csname noderadius134\endcsname);
  \coordinate (e) at (barycentric cs:onethree=-\csname noderadius123\endcsname,mytwo=\csname noderadius13\endcsname);
  \coordinate (f) at (barycentric cs:twofour=-\csname noderadius234\endcsname,mythree=\csname noderadius24\endcsname);
  \node[draw=none] (one) at (tangent cs:node=big,point={(c)},solution=1) {};
  \node[draw=none] (two) at (tangent cs:node=small,point={(c)},solution=1) {};
  \node[draw=none] (three) at (tangent cs:node=big,point={(d)},solution=2) {};
  \node[draw=none] (four) at (tangent cs:node=med,point={(d)},solution=2) {};
  \node[draw=none] (five) at ($(134.center)!-\rmed pt!90:(124.center)$) {};
  \node[draw=none] (six) at ($(124)!\rmed pt!90:(134)$) {};
  \node[draw=none] (seven) at (tangent cs:node=onethree,point={(e)},solution=1) {};
  \node[draw=none] (eight) at (tangent cs:node=onethree,point={(e)},solution=2) {};
  \node[draw=none] (nine) at (tangent cs:node=mytwo,point={(e)},solution=1) {};
  \node[draw=none] (ten) at(tangent cs:node=mytwo,point={(e)},solution=2)  {};
  \node[draw=none] (eleven) at (tangent cs:node=twofour,point={(f)},solution=1) {};
  \node[draw=none] (twelve) at (tangent cs:node=twofour,point={(f)},solution=2) {};
  \node[draw=none] (thirteen) at (tangent cs:node=mythree,point={(f)},solution=1) {};
  \node[draw=none] (fourteen) at (tangent cs:node=mythree,point={(f)},solution=2) {};
  \draw [-] (one.center) -- (two.center);
  \draw [-] (three.center) -- (four.center);
  \draw [-] (five.center) -- (six.center);
  \pic [-,draw, angle radius=\rbig] {angle=one--1234--three};
  \pic [-,draw, angle radius=\rbig] {angle=one--1234--three};
  \pic [-,draw, angle radius=\rsmall] {angle=six--124--two};
  \pic [-,draw, angle radius=\rsmall] {angle=four--134--five};

  \draw [-] (seven.center) -- (nine.center);
  \draw [-] (eight.center) -- (ten.center);
  \pic [-,draw, angle radius=\ronethree] {angle=eight--13--seven};
  \pic [-,draw, angle radius=\rtwo] {angle=nine--123--ten};

  \draw [-] (eleven.center) -- (thirteen.center);
  \draw [-] (twelve.center) -- (fourteen.center);
  \pic [-,draw, angle radius=\ronethree] {angle=twelve--24--eleven};
  \pic [-,draw, angle radius=\rtwo] {angle=thirteen--234--fourteen};
\end{tikzpicture}
\caption{Taylor graph for $(x_1x_2,x_2x_3,x_3x_4,x_4x_5)$ with a partition by Taylor subcomplex, cf. \cite[Figure 1]{embdef}}
\end{figure}
The resulting Taylor subcomplex graph, to which we have assigned a weak grading, would be as follows:
\begin{figure}[H]
\[\begin{tikzcd}
	{\scriptstyle 3} & {\scriptstyle 2} & {\scriptstyle 1} & {\scriptstyle 0} & {\scriptstyle -1} \\
	{T_\varnothing} & {T_{\underline{1}}} & {T_{\underline{123}}} & {T_{\underline{12}}} & {T_{\underline{1234}}} \\
	& {T_{\underline{3}}} & {T_{\underline{14}}} & {T_{\underline{34}}} \\
	& {T_{\underline{2}}} & {T_{\underline{234}}} \\
	& {T_{\underline{4}}} & {T_{\underline{23}}}
	\arrow[from=2-1, to=2-2]
	\arrow[from=2-1, to=3-2]
	\arrow[from=2-1, to=4-2]
	\arrow[from=2-1, to=5-2]
	\arrow[from=2-2, to=2-3]
	\arrow[from=2-2, to=3-3]
	\arrow[from=2-4, to=2-5]
	\arrow[from=3-2, to=2-3]
	\arrow[from=3-4, to=2-5]
	\arrow[from=4-2, to=4-3]
	\arrow[from=5-2, to=3-3]
\end{tikzcd}\]
\caption{Weak grading of the Taylor subcomplex graph for $(x_1x_2,x_2x_3,x_3x_4,x_4x_5)$, cf. \cite[Figure 1]{embdef}}
\label{itseff}
\end{figure}








\noindent The weak grading of the Taylor graph implied by the above weak grading of our Taylor subcomplex graph via Lemma \ref{finalwg} is that shown in Figure \ref{figone}.
\end{example}

\begin{theorem}\label{threefive}
If our Taylor subcomplex graph has a weak grading $\Sigma$,
\[\operatorname{V}_R(R)=\{a\in \mathbb{A}_{\overline{k}}^n: H(T^\Sigma(\boldsymbol{\mathit{f}},a))\neq 0\} \cup\{0\}\,.\]
Under this condition, using the basis $\{b_J\ |\ J\subseteq [n]\}$, the linear maps in $T^\Sigma(\boldsymbol{\mathit{f}},a)$ can be given collectively by assigning matrices with entries polynomials over $a_1,\ldots,a_n$ to each integer, so that selecting some set of values for each $a_i$ yields the differential maps of the corresponding chain complex $T^\Sigma(\boldsymbol{\mathit{f}},a)$.

\end{theorem}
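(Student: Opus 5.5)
The plan is to reduce the theorem to Corollary \ref{ceafp}, using the Taylor resolution $T(\boldsymbol{\mathit{f}})$ as the Koszul resolution $F$ of $R$, and then compare homologies. Corollary \ref{ceafp} says that $a\neq 0$ lies in $\operatorname{V}_R(R)$ exactly when the vector space $T(\boldsymbol{\mathit{f}})\otimes_Q\overline{k}$, equipped with the square-zero endomorphism $d_a$, has nonzero homology $\ker d_a/\operatorname{im} d_a$. By Lemma \ref{finalwg}, the weak grading $\Sigma$ of the Taylor subcomplex graph yields a weak grading $J\mapsto |J|+2\Sigma_{T_J}$ of the Taylor graph. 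Consequently the same underlying space, split as $\bigoplus_i B_i$ with $B_i$ spanned by those $b_J$ of grade $i$, becomes the chain complex $T^\Sigma(\boldsymbol{\mathit{f}},a)$, and its differential is exactly $d_a$.

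The key step is that the homology of $(V,d_a)$, viewed as an ungraded differential space, is the direct sum of the graded homologies $H_i(T^\Sigma(\boldsymbol{\mathit{f}},a))$. This holds because every nonzero entry of $d_a$ maps $B_i$ into $B_{i-1}$, by the definition of a weak grading and the block-subdiagonal matrix displayed after Lemma \ref{intermediatewg}. Hence $d_a$ is homogeneous of degree $-1$ for the decomposition $V=\bigoplus_i B_i$. It follows that $\ker d_a=\bigoplus_i \ker(d_a|_{B_i})$ and $\operatorname{im} d_a=\bigoplus_i d_a(B_{i+1})$, with $d_a(B_{i+1})\subseteq B_i$. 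Therefore
\[
\ker d_a/\operatorname{im} d_a\cong \bigoplus_i H_i(T^\Sigma(\boldsymbol{\mathit{f}},a)).
\]
One side vanishes if and only if the other does, and substituting into Corollary \ref{ceafp} gives the displayed equality. I should also check that $d_a^2=0$, so that $T^\Sigma$ really is a complex. This follows from $d_a$ being the differential of $\widehat{\mathcal{C}}_{E_a}(F)$ in Proposition \ref{prop_supp_Chat}, since the grading is only a regrouping of the same map.

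For the second assertion, I will record the matrix of each graded piece $d_a\colon B_i\to B_{i-1}$ in the basis $\{b_J\}$. By Construction \ref{taylork} and the description of $d_a$ as intra-Taylor-subcomplex maps plus inter-Taylor-subcomplex maps, each entry is either a constant sign $\pm1$ (from $\partial$) or $\pm a_j$ (from the term $a_j b_j$). The sign comes from $\operatorname{sgn}$ and depends only on $J$ and $j$, not on $a$. The sets $B_i$ and the positions of the entries are determined by $\boldsymbol{\mathit{f}}$ and $\Sigma$ alone, independent of $a$. So each entry is a polynomial of degree at most $1$ in $a_1,\ldots,a_n$, and specializing the $a_i$ recovers the differential of $T^\Sigma(\boldsymbol{\mathit{f}},a)$.

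The main obstacle I expect is the independence of the grading from $a$. When some $a_j=0$, edges of the Taylor graph vanish, but the grading $\Sigma$ was fixed using all potential edges (the coefficients are nonzero for generic $a$). The homogeneity argument only needs $d_a(B_i)\subseteq B_{i-1}$, and dropping edges preserves this. So I will state explicitly that the Taylor graph is taken with respect to generic, formally nonzero $a$, which keeps the argument uniform in $a$.
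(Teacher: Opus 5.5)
Your proposal is correct and follows the paper's route. You take $F=T(\boldsymbol{\mathit{f}})$ in Corollary \ref{ceafp}, identify $T^\Sigma(\boldsymbol{\mathit{f}},a)$ as the same space with the same map $d_a$ (now homogeneous of degree $-1$, so its total homology is the direct sum of the graded homologies), and read the entries $0,\pm1,\pm a_j$ off the definition of $d_a$. Your extra remarks, namely the splitting of homology by grade and fixing $\Sigma$ from the generic Taylor graph so that specializing $a$ only deletes edges, make explicit what the paper's one-line proof leaves implicit.
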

\begin{proof}
This characterization of $\operatorname{V}_R(F)$ is clear by Corollary \ref{ceafp} as $\widehat{\mathcal{C}}_{E_a}'(T(\boldsymbol{\mathit{f}}))$ and $T^\Sigma(\boldsymbol{\mathit{f}},a)$ are identical as vector spaces with automorphisms, and our matrix assignment is clear from the definition of $d_a$.
\end{proof}





\begin{example}
\noindent Let $d=6$, and let $\boldsymbol{\mathit{f}}=(x_{\underline{12}},x_{\underline{34}},x_{\underline{56}},x_{\underline{135}},x_{\underline{246}})$. We wish to draw the Taylor subcomplex graph of this sequence of monomials. Omitting the Taylor subcomplexes
\[T_{\underline{14}},T_{\underline{15}},T_{\underline{24}},T_{\underline{25}},T_{\underline{34}},T_{\underline{35}},T_{\underline{124}},T_{\underline{125}},T_{\underline{134}},T_{\underline{135}},T_{\underline{234}},T_{\underline{235}}\]
from our graph for cleanliness (as they would all be isolated vertices), the remainder of our Taylor subcomplex graph, along with an attempt to weakly grade it, is as follows:
\begin{equation*}
\begin{tikzcd}
	{\scriptstyle 3} & {\scriptstyle 2} & {\scriptstyle 1} & {\scriptstyle 0} \\
	& {T_{\underline{1}}} & {T_{\underline{12}}} \\
	& {T_{\underline{2}}} & {T_{\underline{13}}} \\
	{T_{\varnothing}} & {T_{\underline{3}}} & {T_{\underline{23}}} & {T_{\underline{12345}}} \\
	& {T_{\underline{4}}} \\
	& {T_{\underline{5}}}
	\arrow[from=2-2, to=2-3]
	\arrow[from=2-2, to=3-3]
	\arrow[from=2-3, to=4-4]
	\arrow[from=3-2, to=2-3]
	\arrow[from=3-2, to=4-3]
	\arrow[from=3-3, to=4-4]
	\arrow[from=4-1, to=2-2]
	\arrow[from=4-1, to=3-2]
	\arrow[from=4-1, to=4-2]
	\arrow[from=4-1, to=5-2]
	\arrow[from=4-1, to=6-2]
	\arrow[from=4-2, to=3-3]
	\arrow[from=4-2, to=4-3]
	\arrow[from=4-3, to=4-4]
	\arrow[from=5-2, to=4-4]
	\arrow[from=6-2, to=4-4]
\end{tikzcd}
\end{equation*}
This graph is not weakly gradable: this can be seen by considering the Taylor subcomplexes $T_\varnothing$, $T_{\underline{1}}$, $T_{\underline{12}}$, $T_{\underline{4}}$, and $T_{\underline{12345}}$.
\end{example}

In order to take advantage of our understanding of weak gradings, we prove their existence when $\boldsymbol{\mathit{f}}$ is equidegree:

\begin{lemma}\label{220}
Whenever $\boldsymbol{\mathit{f}}$ is equidegree, its Taylor subcomplex graph admits a weak grading.
\end{lemma}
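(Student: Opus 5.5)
Write $\delta$ for the common degree of the generators $f_1,\ldots,f_n$. I would produce the weak grading directly from the degree of the LCM, setting
\[\Sigma_{T_J}:=-\deg f_J,\]
possibly after an affine change such as $\Sigma_{T_J}=-\deg(f_J)/\delta$ if an integer normalization is needed. This is well defined on the Taylor subcomplex graph: by Lemma \ref{feb26}, $T_{M_J}$ depends exactly on the value $f_J$, so every vertex of the quotient carries a single LCM.

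Next I would list the edges of the Taylor subcomplex graph. Intra-Taylor-subcomplex maps stay inside a single vertex, so they become loops or vanish in the quotient. Since the proof of Lemma \ref{finalwg} only uses the weak grading condition on edges coming from inter-Taylor-subcomplex maps, I would adopt the convention that the grading constraint is imposed only on non-loop edges. Every remaining edge comes from some map $a_jb_j\colon T_{M_J}\to T_{\{j\}\cup M_J}$. Such a map is nonzero only if $f_{\{j\}}f_J=f_{\{j\}\cup J}$, meaning $\gcd(f_j,f_J)=1$. In that case
\[\deg f_{\{j\}\cup J}=\deg f_J+\deg f_j=\deg f_J+\delta.\]
So along every edge the LCM degree rises by exactly $\delta$, the same amount for every edge. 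This is precisely where equigeneration is used: in the non-weakly-gradable example preceding the lemma, the generators have degrees $2$ and $3$, and the two paths from $T_\varnothing$ to $T_{\underline{12345}}$ have different lengths.

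Then I would conclude. Put $\Sigma_{T_J}=-\deg(f_J)/\delta$. Every $f_J$ arising at a vertex connected to others is a product of pairwise coprime generators along a path from $T_\varnothing$, but $\deg f_J$ need not be divisible by $\delta$ for arbitrary $J$. To handle this, I would grade each connected component separately: choose a base vertex $T_{J_0}$ in the component and set
\[\Sigma_{T_J}=\frac{\deg f_{J_0}-\deg f_J}{\delta}.\]
This is an integer, because any two vertices in the same component are joined by an undirected path, and along that path the degree changes by $\pm\delta$ at each step. It satisfies the weak grading condition, since each directed edge lowers $\Sigma$ by exactly $1$. Isolated vertices receive $0$.

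The main obstacle is the bookkeeping around loops and the direction convention, rather than the degree argument itself. I need to confirm that the Taylor subcomplex graph only carries edges from inter-Taylor-subcomplex maps, once loops are discarded, and that the sign convention (the target has grade one less than the source) matches Lemma \ref{finalwg}, in which $\Sigma_{T_J}=\Sigma_{T_{J'}}+1$ along inter-subcomplex edges.
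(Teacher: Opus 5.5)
Your proposal is correct and is essentially the paper's argument: the paper sets $\Sigma_{T_J}=-\lfloor \deg f_J/\deg f_1\rfloor$ and uses, as you do, that an edge $T_J\to T_{\{j\}\cup J}$ forces $f_{\{j\}\cup J}=f_jf_J$, so the LCM degree rises by exactly the common degree $\delta$. The only difference is cosmetic. The floor gives integrality globally, since $\lfloor (x+\delta)/\delta\rfloor=\lfloor x/\delta\rfloor+1$, so your component-by-component normalization is unnecessary, though valid. Your side claim that every non-isolated vertex is reached from $T_\varnothing$ by coprime generators is false in general (e.g.\ $T_{\underline{12}}\to T_{\underline{1234}}$ for $(x_1x_2,x_2x_3,x_3x_4,x_4x_5)$), but your argument never uses it.
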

\begin{proof}
Let $\Sigma_{T_J}=-\left\lfloor\frac{\operatorname{deg}f_J}{\operatorname{deg}f_1}\right\rfloor$. If $f_{\{j\}\cup J}=f_{\{j\}}f_J$, which is to say that there is an edge in our Taylor subcomplex graph from $T_J$ to $T_{\{j\}\cup J}$, then 
\begin{align*}
&\Sigma_{T_{\{j\}\cup J}}=-\left\lfloor\frac{\deg f_{\{j\}\cup J}}{\deg f_1}\right\rfloor=-\left\lfloor\frac{\deg f_{\{j\}\cup J}}{\deg f_1}\right\rfloor=-\left\lfloor\frac{\deg \left(f_{\{j\}}f_J\right)}{\deg f_1}\right\rfloor\\
&\qquad =-\left\lfloor\frac{\deg f_{\{j\}}+\deg f_J}{\deg f_1}\right\rfloor=-\left\lfloor\frac{\deg \left(f_{\{j\}}f_J\right)}{\deg f_1}\right\rfloor=-\left\lfloor\frac{\deg f_{\{j\}}}{\deg f_1}+\frac{\deg f_J}{\deg f_1}\right\rfloor,
\end{align*}
so by our assumption that $\boldsymbol{\mathit{f}}$ is equidegree, $\deg f_{\{j\}}=\deg f_1$ so $\Sigma_{T_{\{j\}\cup J}}=\Sigma_{T_J}-1$, completing our proof. 
\end{proof}

\begin{corollary}\label{theoremareal}
Every cohomological support variety of a ring with a minimal regular presentation given by an equigenerated monomial ideal with $n$ generators is the set of points in $\mathbb{A}_k^n$ such that a chain complex of vector spaces with total dimension $2^n$ with entries defined by polynomials in the $n$ variables begetting our affine space has non-trivial homology.
\end{corollary}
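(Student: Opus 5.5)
The plan is to assemble the corollary directly from Lemma \ref{220} and Theorem \ref{threefive}. Let $R$ have minimal regular presentation $Q/I$, where $I$ is minimally generated by an equidegree sequence of monomials $\boldsymbol{\mathit{f}}=f_1,\ldots,f_n$. By Lemma \ref{220}, the Taylor subcomplex graph of $\boldsymbol{\mathit{f}}$ admits the weak grading $\Sigma_{T_J}=-\lfloor \deg f_J/\deg f_1\rfloor$. By Lemma \ref{finalwg}, this produces the chain complex $T^\Sigma(\boldsymbol{\mathit{f}},a)$. Its underlying graded vector space has basis $\{b_J\mid J\subseteq[n]\}$, so its total dimension is $2^n$. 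Its differential is $d_a$, regraded by placing $b_J$ in degree $|J|+2\Sigma_{T_J}$.

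Next, I would invoke Theorem \ref{threefive}. It gives
\[\operatorname{V}_R(R)=\{a\in\mathbb{A}^n_{\overline{k}}: H(T^\Sigma(\boldsymbol{\mathit{f}},a))\neq 0\}\cup\{0\}.\]
It also shows that, in the basis $\{b_J\}$, each differential of $T^\Sigma(\boldsymbol{\mathit{f}},a)$ is a matrix whose entries are polynomials in $a_1,\ldots,a_n$. From the description of $d_a$, these entries are the constants $\pm1$ coming from the intra-Taylor-subcomplex maps and the terms $\pm a_j$ coming from the inter-Taylor-subcomplex maps. This gives exactly the phrasing of the statement.

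Two small points need care, and the first is the main obstacle. The extra point $\{0\}$ has to be absorbed into the homology condition. At $a=0$ the complex $T^\Sigma(\boldsymbol{\mathit{f}},0)$ has differential $\partial^{T}\otimes\overline{k}$. Since $I$ lies in the square of the maximal ideal, the basis element $b_\varnothing$ is a cycle in $T(\boldsymbol{\mathit{f}})\otimes\overline{k}$. It is not a boundary, because every coefficient $f_{\{j\}}/f_\varnothing=f_j$ is a non-unit. Hence $H(T^\Sigma(\boldsymbol{\mathit{f}},0))\neq0$, the union with $\{0\}$ is redundant, and the variety is precisely the locus where the homology is nonzero.

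The second point is passing from $\mathbb{A}^n_{\overline{k}}$ to $\mathbb{A}^n_k$. The defining matrices have integer entries, so the nonvanishing-homology locus is cut out by rank conditions on these matrices, that is, by vanishing of minors, which is defined over $k$. By the paper's stated convention, $\operatorname{V}_R(R)$ over $k$ and over $\overline{k}$ determine each other via the Nullstellensatz, which completes the argument.
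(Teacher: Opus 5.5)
Your proposal is correct and follows the paper's proof, which simply combines Lemma~\ref{220} (the weak grading $\Sigma_{T_J}=-\lfloor \deg f_J/\deg f_1\rfloor$) with Theorem~\ref{threefive}. Your two extra points go beyond the paper's one-line proof, which leaves both implicit, and both are valid: $0$ lies in the nonvanishing-homology locus because $H(T(\boldsymbol{\mathit{f}})\otimes\overline{k})\cong\operatorname{Tor}^Q(R,\overline{k})\neq 0$, so the ``$\cup\{0\}$'' is redundant; and that locus is cut out by minors of matrices with integer-coefficient entries.
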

\begin{proof}
The chain complex $T^{\Sigma}(\boldsymbol{\mathit{f}},a)$ given in Theorem \ref{threefive} fits the description of the chain complex in the statement of the claim, and when $\boldsymbol{\mathit{f}}$ is equigenerated our Taylor subcomplex graph admits a weak grading by Lemma \ref{220}.
\end{proof}
The Macaulay2 method which currently computes cohomological support varieties (\cite[\texttt{extKoszul}]{ThickSubcategoriesSource}) does so by calculating the homology of $d_a$ on $\widehat{\mathcal{C}}_{E_a}'(F)$ for $F$ some minimal resolution. Since the Taylor resolution is not necessarily minimal, and we make no explicit claims regarding the distribution of our chain complex, we cannot make claims that our calculations of cohomological support varieties will always consider strictly smaller matrices. However, whenever for each generator of a monomial ideal there is some variable such that the power of that variable is largest at that generator, for example in the ideal
\begin{equation}\label{eqmar7}
(x_1^2x_2,x_2^2x_3,\ldots,x_d^2x_1)
\end{equation}
over $\mathbb{Q}[x_1,\ldots,x_d]$ for $d\geq2$ a positive integer, the Taylor resolution is minimal \cite[Theorem 4.4]{mintaylor}. This would require the existing method to calculate the homology of a $2^d$-by-$2^d$ square matrix, though with some further implementation it could instead calculate kernels and images of $2^{d-1}$-by-$2^{d-1}$ square matrices by leveraging the decomposition of $F\otimes_Q\overline{k}$ into $F_{\text{even}}\otimes_Q\overline{k}$ and $F_{\text{odd}}\otimes_Q\overline{k}$ as in Proposition \ref{prop_supp_Chat}. For our calculations, the consideration of a $2^{d-1}$-by-$2^{d-1}$ matrix would be the worst case, necessitating that our Taylor graph was concentrated in two consecutive grades. For some perspective on this worst case, the ideal \eqref{eqmar7}, using the construction afforded by Theorem \ref{threefive} and Lemma \ref{220}, will yield non-zero entries in all grades from 0 to $d-2\left\lfloor2d/3\right\rfloor$.

We can impose even further structure on our chain complexes by construction. We follow the convention in which double complexes' arrows go down and to the right, with the first entry decreasing by one when following a right-facing arrow, and with the second entry decreasing by one when following a down-facing arrow:

\begin{theorem}\label{doublecomplex}
If our Taylor subcomplex graph has a weak grading $\Sigma$, then $d_a$ begets a double complex differential on the bigraded vector space with basis $\{b_J\ |\ J\subseteq[n]\}$ which assigns $b_J$ bigrade $(|J|+\Sigma_{T_J},\Sigma_{T_J})$, such that the vertical differential is the sum of our inter-Taylor-subcomplex maps and the horizontal differential is the sum of our intra-Taylor-subcomplex maps. Furthermore, the row with grade $i$ is the direct sum of chain complexes identical to $T_J(\boldsymbol{\mathit{f}})$ for $\Sigma_{T_J}=i$ except that the grade of each basis element is decreased by $\Sigma_{T_J}$. Further still, the na\"{i}ve totalization of this double complex (given by taking the sum of the vertical and horizontal differentials, without imposing any further signs) is $T^{\Sigma}(\boldsymbol{\mathit{f}},a)$.
\end{theorem}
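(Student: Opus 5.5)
The plan is to check, map by map, that the proposed bigrading is compatible with the two kinds of maps that make up $d_a$, and then to read off the three claims. Set $p(J)=|J|+\Sigma_{T_J}$ and $q(J)=\Sigma_{T_J}$, so $b_J$ has bigrade $(p(J),q(J))$.

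\textbf{Intra-Taylor-subcomplex maps.} These send $b_J$ to $b_{J\setminus\{j\}}$ with $f_J=f_{J\setminus\{j\}}$. The two sets lie in the same Taylor subcomplex, so $\Sigma$ is unchanged while $|J|$ drops by one. Hence $p$ decreases by $1$ and $q$ is fixed, so these maps are horizontal of bidegree $(-1,0)$.

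\textbf{Inter-Taylor-subcomplex maps.} These send $c_K\mapsto a_jb_jc_K$, landing in $T_{\{j\}\cup M_J}$. They arise from edges of the Taylor subcomplex graph, so by the definition of a weak grading $\Sigma$ drops by one while $|K|$ rises by one. Thus $p$ is unchanged and $q$ decreases by $1$, so these maps are vertical of bidegree $(0,-1)$.

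Write $d_a=d_h+d_v$ for this splitting. I then need $d_h^2=0$, $d_v^2=0$ and $d_hd_v+d_vd_h=0$, which is the paper's naïve convention with no extra signs. I will get all three from $d_a^2=0$, decomposing $d_a^2$ by bidegree:
\[
d_a^2=d_h^2+(d_hd_v+d_vd_h)+d_v^2,
\]
where the three pieces have bidegrees $(-2,0)$, $(-1,-1)$ and $(0,-2)$. These are distinct, so each piece vanishes separately.

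The main obstacle is justifying $d_a^2=0$ on $T(\boldsymbol{\mathit{f}})\otimes\overline{k}$. I will argue directly rather than through Proposition \ref{prop_supp_Chat}.
\begin{itemize}
\item $\partial^2=0$, since $T(\boldsymbol{\mathit{f}})\otimes\overline{k}$ is a complex.
\item $\bigl(\sum_i a_ib_i\bigr)^2=0$, by graded commutativity of the DG algebra: $b_ib_i=0$ because the index sets overlap, and $b_ib_j=-b_jb_i$.
\item The cross term is $\partial\bigl(\sum_i a_ib_i\cdot x\bigr)+\sum_i a_ib_i\,\partial x=\sum_i a_i\,\partial(b_i)\,x$ by the Leibniz rule. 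Here $\partial(b_i)=f_i\otimes1=0$ in $\overline{k}$, since $f_i$ lies in the maximal ideal.
\end{itemize}
This uses the Leibniz rule for the Taylor product after tensoring with $\overline{k}$, which holds because Construction \ref{taylork} is the reduction of a DG algebra. As a cross-check, the vanishing of $d_hd_v+d_vd_h$ should amount to the inter-Taylor maps being chain maps up to sign between the complexes $T_J$.

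\textbf{Rows.} The row with $q=i$ is spanned by those $b_J$ with $\Sigma_{T_J}=i$, and its differential is $d_h$ restricted to it. This restriction is the direct sum of the $\partial T_{M_J}$ over those subcomplexes, using the decomposition of $T(\boldsymbol{\mathit{f}})\otimes\overline{k}$ proven earlier. Each basis element sits in column $|J|+i$, that is, its original Taylor degree shifted by $\Sigma_{T_J}=i$. (I will reconcile the direction of this shift with the statement's wording, which speaks of decreasing the grade.)

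\textbf{Totalization.} The total degree is $p+q=|J|+2\Sigma_{T_J}$, which is exactly the grading of Lemma \ref{finalwg}. The naïve total differential is $d_h+d_v=d_a$, so the totalization is $T^\Sigma(\boldsymbol{\mathit{f}},a)$ as defined there.
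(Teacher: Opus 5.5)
Your proposal is correct and follows essentially the same route as the paper. Both arguments give the intra- and inter-Taylor-subcomplex maps bidegrees $(-1,0)$ and $(0,-1)$, obtain $d_h^2=d_v^2=d_hd_v+d_vd_h=0$ by splitting $d_a^2=0$ into pieces of distinct bidegree, and read off the row decomposition and the totalization. Your only addition is a direct check of $d_a^2=0$ via the Leibniz rule and $\partial(b_i)=f_i\otimes 1=0$, which the paper takes as already known.
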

\begin{proof}
Note that any coefficient of $d_a$ born of an intra-Taylor-subcomplex map is of bidegree $(1,0)$ under this bigrading, and any coefficient of $d_a$ born of an inter-Taylor-subcomplex map is of bidegree $(0,-1)$ under this bigrading. To show that this is a double complex, one must show that
\[d_{v,a}^2,d_{h,a}^2,d_{v,a}d_{h,a}+d_{h,a},d_{v,a}=0.\]
Note that for any basis vector $b_J$, the images of these three maps are linear sums of disjoint sets of basis vectors, so it suffices to note that their sum, $(d_{v,a}+d_{h,a})^2$, is zero. However, $d_{v,a}+d_{h,a}$ is simply $d_a$, which we already know has square zero.

We quickly address the claim regarding the decomposition of each row into familiar chain complexes. Since we can see that the basis vectors corresponding to the chain complexes $T_J(\boldsymbol{\mathit{f}})$ in question have the correct bigrades, it suffices to show that there are no horizontal differentials between entries not corresponding to subsets sharing an LCM. However, we have already stated that our horizontal differential is a sum only of intra-Taylor-subcomplex maps, which implies this quite clearly.
\end{proof}
\begin{example}
Continuing with Example \ref{taylorg}, the weak grading given in Figure \ref{itseff} yields the double complex in Figure \ref{figm6}, in which each non-zero entry is given by the collection of vertices whose corresponding basis vectors generate it.
\begin{figure}[h]
\begin{tikzpicture}[
    scale = 0.9,
  node distance=1cm and 1.5cm,
  every node/.style={circle, draw, minimum size=8mm},
  ->,
  baseline=(current bounding box.center)
]

\node (1234) at (12,-5) {\underline{1234}};

\node (123)  at (2.6,-4.6)  {\underline{123}};
\node (124)  at (11.65,-7.6)  {\underline{124}};
\node (134)  at (12.35,-8.4) {\underline{134}};
\node (234)  at (3.4,-5.4) {\underline{234}};

\node (12) at (8.65,-7.7)  {\underline{12}};
\node (13) at (5.55,-4.55)    {\underline{13}};
\node (14) at (6.45,-4.55)  {\underline{14}};
\node (23) at (5.55,-5.45)    {\underline{23}};
\node (24) at (6.45,-5.45) {\underline{24}};
\node (34) at (9.35,-8.3)   {\underline{34}};

\node (1) at (5.55,-1.55)  {\underline{1}};
\node (2) at (6.45,-1.55)    {\underline{2}};
\node (3) at (5.55,-2.45)  {\underline{3}};
\node (4) at (6.45,-2.45)   {\underline{4}};

\node (empty) at (6,1) {$\varnothing$};
\node[draw=none] at (3,1) {0};
\node[draw=none] at (9,1) {0};
\node[draw=none] at (12,1) {0};
\node[draw=none] at (3,-2) {0};
\node[draw=none] at (9,-2) {0};
\node[draw=none] at (12,-2) {0};
\node[draw=none] at (9,-5) {0};
\node[draw=none] at (3,-8) {0};
\node[draw=none] at (6,-8) {0};



\foreach \a in {1,2,3,4} {
\foreach \b in {1,2,3} {
	\draw (3*\b+1,3*\a-11)-- (3*\b+2,3*\a-11);
	\draw (3*\a,3*\b-9)-- (3*\a,3*\b-10);
}
}
\node[draw=none] at (1.5,1) {\scriptsize3};
\node[draw=none] at (1.5,-2) {\scriptsize2};
\node[draw=none] at (1.5,-5) {\scriptsize1};
\node[draw=none] at (1.5,-8) {\scriptsize0};
\node[draw=none] at (3,2.5) {\scriptsize4};
\node[draw=none] at (6,2.5) {\scriptsize3};
\node[draw=none] at (9,2.5) {\scriptsize2};
\node[draw=none] at (12,2.5) {\scriptsize1};
\end{tikzpicture}
\caption{Double complex corresponding to the weak grading of the Taylor subcomplex graph of $(x_1x_2,x_2x_3,x_3x_4,x_4x_5)$ given by Figure \ref{itseff}, in which each non-zero entry is given by the collection of vertices whose corresponding basis vectors generate it, cf. \cite[Figure 1]{embdef}}
\label{figm6}
\end{figure}
\end{example}

We will use $T_{M_J}^\Sigma(\boldsymbol{\mathit{f}},a)$ to refer to the row-direct-summand corresponding to $T_{M_J}(\boldsymbol{f})$ mentioned above, though we may drop the $\boldsymbol{\mathit{f}}$ and $a$ from this notation when they are clear or replace $M_J$ with simply $J$. We let $c_J^\Sigma$ refer to the basis element of this double complex afforded by this corollary corresponding to $c_J$.

\begin{corollary}\label{components}
If the Taylor subcomplex graph of $\boldsymbol{\mathit{f}}$ is weakly gradable, $T^\Sigma(\boldsymbol{\mathit{f}},a)$ can be decomposed by partitioning the basis $\{b_J\ |\ J\subseteq[n]\}$ by the connected component of the Taylor subcomplex graph containing $T_J$. This decomposition also applies to the double complex given in Theorem \ref{doublecomplex}.
\end{corollary}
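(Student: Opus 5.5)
The plan is to show that $d_a$ never has a non-zero coefficient between basis vectors $b_J$ and $b_K$ whose Taylor subcomplexes $T_J$ and $T_K$ lie in different connected components of the Taylor subcomplex graph. Once this is shown, the span of $\{b_J : T_J \in C\}$ is $d_a$-stable for each component $C$. Since $T^\Sigma(\boldsymbol{\mathit{f}},a)$ is graded by a grading under which each $b_J$ is homogeneous (Lemma \ref{finalwg}), each such span is a graded subcomplex. Hence $T^\Sigma(\boldsymbol{\mathit{f}},a)$ is the direct sum of these subcomplexes over all components.

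First, I would recall that the vertices of the Taylor subcomplex graph partition the set $\{J\subseteq[n]\}$: each $J$ lies in exactly one class $S_J$, as shown in the proposition decomposing $T(\boldsymbol{\mathit{f}})\otimes\overline{k}$. Grouping these classes by connected component therefore partitions the basis $\{b_J\}$.

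Second, I would take a non-zero coefficient of $d_a$ from $b_J$ to $b_K$. By definition this is an edge $J\to K$ of the Taylor graph. Its image under the quotient map is either a loop at $T_J$, when the coefficient is intra-Taylor-subcomplex, or an edge $T_J\to T_K$ of the Taylor subcomplex graph, when it is inter-Taylor-subcomplex. In both cases $T_J$ and $T_K$ lie in the same component. So the matrix of $d_a$, in the basis $\{b_J\}$ ordered by component, is block diagonal. Because each block preserves the grading, the decomposition holds as chain complexes.

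Third, for the double complex of Theorem \ref{doublecomplex}, I would run the same argument separately on the two pieces. The horizontal differential consists of the intra-Taylor-subcomplex maps, which stay inside a single $T_J$. The vertical differential consists of the inter-Taylor-subcomplex maps, which follow edges of the Taylor subcomplex graph. So both differentials preserve each component span. The bigrading $(|J|+\Sigma_{T_J},\Sigma_{T_J})$ is defined basis element by basis element, so each component span is a bigraded sub-double-complex and the double complex is their direct sum.

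I expect no genuine obstacle here. The only care needed is the bookkeeping that coefficients of $d_a$ correspond exactly to edges of the Taylor graph, and that these edges descend to edges or loops of the quotient graph, which holds by definition.
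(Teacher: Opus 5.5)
Your proposal is correct and follows essentially the same reasoning the paper relies on; the paper states the corollary without a written proof. The point is that every nonzero coefficient of $d_a$ is an edge of the Taylor graph that either stays inside one $T_J$ (horizontal) or descends to an edge of the Taylor subcomplex graph (vertical). So the span of the basis vectors over each connected component is a (bi)graded subcomplex, and the complex is the direct sum of these spans.
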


Finally, we provide information regarding whether a weak grading exists for some given sequence of monomials:
\begin{theorem}
The Taylor subcomplex graph of a sequence $\boldsymbol{\mathit{f}}$ admits a weak grading if and only if there are no two subsets $S,T\subset[n]$ such that any pair of elements of $S$ or pair of elements of $T$ are coprime, $f_S=f_T$, and $|S|\neq|T|$.
\end{theorem}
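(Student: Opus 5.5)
The plan is to prove necessity with an explicit obstruction built from two paths out of $T_\varnothing$, and sufficiency by constructing a grading on each connected component of the Taylor subcomplex graph. Throughout, I identify a vertex $T_{M_J}$ with the monomial $f_J$; this is legitimate by Lemma \ref{feb26}. An edge $T_L\to T_{\{j\}\cup L}$ exists exactly when $\gcd(f_j,f_L)=1$, and it multiplies the vertex monomial by $f_j$. A weak grading therefore amounts to a function $\Sigma$ on vertex monomials that decreases by one across every such coprime multiplication.

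For necessity, suppose $S,T$ are as in the statement, with $f_S=f_T$ and $|S|\neq|T|$. Write $S=\{s_1,\ldots,s_p\}$. Because the elements of $S$ are pairwise coprime, $f_{s_{i+1}}$ is coprime to $f_{\{s_1,\ldots,s_i\}}=f_{s_1}\cdots f_{s_i}$. Hence $T_\varnothing\to T_{\{s_1\}}\to\cdots\to T_S$ is a directed path of length $p$ in the Taylor subcomplex graph, and likewise there is a path of length $|T|$ from $T_\varnothing$ to $T_T$. Since $f_S=f_T$, these two paths end at the same vertex. Any weak grading would force $\Sigma_{T_S}=\Sigma_{T_\varnothing}-|S|$ and also $\Sigma_{T_S}=\Sigma_{T_\varnothing}-|T|$, which is a contradiction.

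For sufficiency, I argue one connected component at a time, using that Corollary \ref{components} lets components be graded independently. A weak grading of a component exists if and only if every closed undirected walk has equally many forward and backward edges. Equivalently, the signed length of an undirected path between two fixed vertices must be independent of the path. To control this I introduce a normal form for vertices.

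Call a vertex with monomial $m$ \emph{reducible} if $m=m'f_j$ with $\gcd(m',f_j)=1$, where $m'=f_{L}$ for some $L$. This is exactly the condition for a backward edge out of $m$. I want to show that repeatedly reducing reaches a unique irreducible core $c$ of the component, and that every vertex of the component has the form $m=c\cdot f_P$ with $P$ pairwise coprime and $f_P$ coprime to $c$. The tool is a diamond-lemma argument: if $m=m'f_j=m''f_k$ are two reductions with $j\neq k$, then I show $f_j,f_k$ are coprime and $m=(m'/f_k)f_jf_k$, so the two reductions can be completed to a common one.

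The key computation is the exponent comparison in that local step. Any variable dividing both $f_j$ and $f_k$ would have its exponent in $m$ attained only by $f_j$, since $f_j$ is coprime to $m'$, and also only by $f_k$, since $f_k$ is coprime to $m''$. I then need to rule this out, or handle it when it occurs, using that $f_j$ and $f_k$ both divide $m$. Granting confluence, I define $\Sigma(c\cdot f_P)=-|P|$. Well-definedness, meaning that $|P|$ does not depend on the chosen decomposition, is exactly where the hypothesis enters. If $c\,f_P=c\,f_{P'}$ with $|P|\neq|P'|$, then $f_P=f_{P'}$ with $P,P'$ pairwise coprime, which the hypothesis forbids. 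Each edge changes $|P|$ by exactly one, so $\Sigma$ is then a weak grading.

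I expect the main obstacle to be the confluence/normal-form step: showing that the core is unique, and that cancelling $c$ from $c\,f_P=c\,f_{P'}$ is legitimate. The second point requires both $f_P$ and $f_{P'}$ to be coprime to $c$; if the normal form only delivers this up to some overlap, I will instead adjoin to $P$ and $P'$ a common pairwise-coprime set realising part of $c$, so as to land exactly in the form of the hypothesis. As a fallback, I would argue by induction on the degree of the vertex monomials along a minimal counterexample cycle. I would take the vertex of the cycle whose monomial has maximal degree; at that vertex both adjacent edges are backward reductions, and the diamond step lets me replace this peak by a lower one, shortening the cycle's height until it collapses to a pair of paths out of a common vertex. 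From that configuration I would extract the forbidden pair $S,T$.
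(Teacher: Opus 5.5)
Your necessity argument is correct and matches the paper's. Your fallback for sufficiency is essentially the paper's sufficiency proof: lower the highest vertex of an unbalanced cycle until it becomes two directed paths out of a common vertex $T_L$, then cancel $f_L$ to get $f_S=f_T$. The gap is the confluence step you flag and then grant, and that step is false. Two backward reductions $m=m'f_j=m''f_k$ of the same vertex need not satisfy $\gcd(f_j,f_k)=1$. When they don't, $m/(f_jf_k)$ is not a common lower vertex. Already for $\boldsymbol{\mathit{f}}=(x_1x_2,x_2x_3,x_3x_4,x_4x_1)$ one has $x_1x_2x_3x_4=f_3f_1=f_4f_2$ with $\gcd(f_1,f_2)=x_2$. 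Your exponent comparison only shows that a shared variable occurs to the same power in $f_j$ and $f_k$, which is no contradiction. The paper's proof makes the same unjustified move when it asserts $g_1=\{j\}\sqcup K$, $g_2=\{i,j\}\sqcup K$, $g_3=\{i\}\sqcup K$.

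This cannot be patched, because the ``if'' direction is false as stated. The hypothesis is equivalent to ``directed paths with common endpoints have equal length.'' A weak grading needs every undirected cycle to be balanced, and peaks need not be removable. For a counterexample, take variables $g,x,y,z,u,v,w,t,s,r,q$ and
\[
\boldsymbol{\mathit{f}}=(gxy,\ gyz,\ guv,\ gvw,\ uvwts,\ xyzts,\ xyztrq,\ ur,\ vwtq),
\]
which is a minimal generating set in degrees $\geq 2$. Each of the following steps adjoins a generator coprime to the current LCM, so each is an edge:
\[
T_{\{1,2\}}\to T_{\{1,2,5\}},\quad T_{\{3,4\}}\to T_{\{3,4,6\}},\quad T_{\{3,4\}}\to T_{\{3,4,7\}},\quad T_{\{1,2\}}\to T_{\{1,2,8\}}\to T_{\{1,2,8,9\}}.
\]
Moreover $f_{\{1,2,5\}}=f_{\{3,4,6\}}$ and $f_{\{3,4,7\}}=f_{\{1,2,8,9\}}$. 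A weak grading would therefore need both $\Sigma_{T_{\{1,2\}}}=\Sigma_{T_{\{3,4\}}}$ and $\Sigma_{T_{\{1,2\}}}-2=\Sigma_{T_{\{3,4\}}}-1$, which is impossible.

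Yet there is no forbidden pair. The nonempty pairwise coprime subsets are the singletons, the pairs $\underline{15},\underline{18},\underline{19},\underline{25},\underline{28},\underline{29},\underline{36},\underline{37},\underline{46},\underline{47},\underline{48},\underline{68},\underline{89}$, and the triples $\underline{189},\underline{289},\underline{468}$. The only LCM degrees shared by subsets of different sizes are $5$, $6$ and $9$, and in each case the LCMs differ:
\begin{itemize}
\item degree $5$: $\{5\},\{6\}$ lack $g$, while $\underline{18},\underline{28},\underline{48}$ contain it;
\item degree $6$: $\{7\}$ contains $x$, while $\underline{89}$ does not;
\item degree $9$: $\underline{37},\underline{47}$ contain both $x$ and $z$, while $\underline{189},\underline{289}$ do not.
\end{itemize}
Note also that $T_{\{1,2\}}$ and $T_{\{3,4\}}$ are distinct irreducible vertices of one component, so your unique-core normal form fails as well. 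No argument can close this gap without changing the statement.
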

\begin{proof}
If two such subsets exist then a Taylor subcomplex graph is not weakly gradable: in any Taylor subcomplex graph we can get from $T_{\varnothing}$ to $T_{M_S}$ via either $|S|$ or $|T|$ edges.

If no two such subsets exist, we will iteratively assign weights to our $T_J$ for increasing sizes of $J=M_J$ by the procedure which follows. If $T_J$ is not connected to any subcomplex which has already been assigned a weight, assign it a weight of zero. Note that in this case, a new connected component of the induced subgraph $G$ of Taylor subcomplex graph consisting of those subcomplexes already considered is created. If $T_J$ is connected to some subcomplexes which have already been assigned a weight, note that the edges between $T_J$ and these all have $T_J$ as their target. If these source subcomplexes share an assigned weight whenever they share a connected component of $G$, then we can adjust our weights by connected component to allow for a compatible assignment of weight to $T_J$. This leaves us the case in which there are two subcomplexes $T_{J_1}$ and $T_{J_2}$ in the same connected component of $G$ which have been assigned different weights and both of which have an edge directed towards $T_J$.

Consider now this case. Consider a path $p$ from $T_{J_1}$ to $T_{J_2}$, possibly moving backwards along edges. We claim that such a path $p'$ exists which moves backwards along some non-negative number of edges and then moves forwards along some non-negative number of edges. To prove this, it suffices to show that, if at any point in $p$ we move forwards along one edge to some and then backwards along another, going from $g_1$ to $g_2$ to $g_3$ for some $g_1$, $g_2$, $g_3$, we could have instead moved backwards along an edge first and then forwards along another and arrived at the same destination. If our edges correspond to the elements $f_{\underline{i}}$ and $f_{\underline{j}}$ respectively, then for some $K\subset[n]\setminus\{i,j\}$ we have
\[g_1=\{j\}\sqcup K,\qquad g_2=\{i,j\}\sqcup K,\qquad g_3=\{i\}\sqcup K,\]
where $f_{\underline{i}}$ and $f_{\underline{j}}$ are coprime with $f_{\underline{j}K}$ and $f_{\underline{i}K}$ respectively. This implies that $f_{\underline{i}}$ and $f_{\underline{j}}$ are coprime with $f_{K}$, so we can replace $g_2$ with $K$ in $p$, so a path $p'$ as described above exists.

There must be some unique element $T_L$ along $p'$ with the highest assigned weight. Then we have paths moving forwards along edges from $T_L$ to both $T_{J_1}$ and $T_{J_2}$ in $G$. Our iterative assignment of weights to the vertices of $G$ and the fact that in this assignment $T_{J_1}$ and $T_{J_2}$ have different weights tell us that these paths have different lengths. By considering the additional edges from these to $T_J$, we have two paths of different lengths from $T_L$ to $T_J$ moving forward along edges. Say that these edges correspond to sets $S$ and $T$. Then we have
\[\operatorname{gcd}(f_L,f_S)=1,\quad\operatorname{gcd}(f_L,f_T)=1,\quad\text{and}\quad f_{L\cup S}=f_{L\cup T}.\]
The first two of these indicate that
\[f_{L\cup S}=f_{L}f_{S}\quad\text{and}\quad f_{L\cup T}=f_{L}f_{T},\]
yielding $f_S=f_T$, so $S$ and $T$ satisfy the conditions given in the theorem statement, contradiction.
\end{proof}
\noindent This theorem tells us that it is relatively easy to check whether a Taylor subcomplex admits a weak grading: a non-weakly-gradable Taylor subcomplex graph will have two paths from $T_{\varnothing}$ to some other vertex with different lengths, and furthermore these paths will each be given by coprime sets of elements of $\boldsymbol{\mathit{f}}$.

\section{Edge ideals of cycles}\label{edge}
The edge ideal of a $d$-cycle is $\boldsymbol{\mathit{f}}=x_1x_2,x_2x_3,\ldots,x_dx_1$ in $Q[x_1,\ldots,x_d]$. We begin with some preliminary information, then proceed with our two by-hand examples.
\subsection{Simplicial complexes concerning edge ideals}\label{inclusionsection}

We have a decent understanding of the simplicial complexes $\Delta_J$ corresponding to edge ideals thanks to \cite{kozlov}, whose results we will recall here in a more relevant form. Consider first $\Delta_{\underline{12\ldots i}}$ for some $2\leq i<n-1$. $\Delta_{\underline{12\ldots i}}$ has $i-2$ points, indexed from 2 to $i-1$, and consists of subsets of this set of indices containing no two adjacent values. 

\begin{lemma}[{\cite[Proposition 4.6]{kozlov}}]\label{godhom}
Say $i\geq1$ and $\boldsymbol{\mathit{f}}$ is the edge ideal of a cycle with at least $i+2$ generators. If $3\mid i$, then $\Delta_{\underline{1\ldots i}}$ has trivial reduced homology. Otherwise, its reduced homology is $\mathbb{Z}$ at entry $\left\lfloor\frac{n}{3}-1\right\rfloor$ and zero elsewhere.
\end{lemma}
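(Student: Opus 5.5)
The plan is to identify $\Delta_{\underline{1\ldots i}}$ with the independence complex of a path, and then compute the homotopy type of such complexes by a standard suspension recursion, which is the content of \cite[Proposition 4.6]{kozlov}. I read the exponent $\left\lfloor\frac{n}{3}-1\right\rfloor$ in the statement as $\left\lfloor\frac{i}{3}-1\right\rfloor$, since it must depend only on $i$.

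First I will pin down $M_J$ and $\Delta_J$ for $J=\{1,\ldots,i\}$, where $f_J=x_1x_2\cdots x_{i+1}$.
\begin{itemize}
\item Since there are at least $i+2$ generators, the generators $f_{i+1}=x_{i+1}x_{i+2}$ and $f_n=x_nx_1$ involve a variable outside $\{x_1,\ldots,x_{i+1}\}$. Hence no generator other than $f_1,\ldots,f_i$ divides $f_J$. By the lemma characterizing $M_J$, this gives $M_J=[i]$.
\item A subset $K\subseteq[i]$ satisfies $f_K=f_J$ exactly when every variable $x_1,\ldots,x_{i+1}$ is covered by some $f_j$ with $j\in K$.
\item The variable $x_1$ is covered only by $f_1$, and $x_{i+1}$ only by $f_i$, so $1$ and $i$ must lie in $K$.
\item Each interior variable $x_j$ with $2\le j\le i$ is covered by $f_{j-1}$ or $f_j$. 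So the complement $K_\Delta=[i]\setminus K$ lies in $\{2,\ldots,i-1\}$ and contains no two consecutive integers.
\end{itemize}
Thus $\Delta_{\underline{1\ldots i}}=\operatorname{Ind}(P_m)$, the independence complex of the path on $m=i-2$ vertices. When $m\le 0$, that is $i\in\{1,2\}$, this is the complex $\{\varnothing\}$. The case $i=1$ needs the direct check $M_{\{1\}}=\{1\}$, again using $n\ge 3$.

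Next I will prove, by induction on $m$, that
\[
\operatorname{Ind}(P_m)\simeq\begin{cases}S^{k-1} & m=3k-1 \text{ or } m=3k,\\ \text{contractible} & m=3k+1.\end{cases}
\]
Here $\{\varnothing\}$ is treated as $S^{-1}$, whose reduced homology is $\mathbb{Z}$ in degree $-1$.
\begin{itemize}
\item The base cases are $m=0$ (the complex $S^{-1}$), $m=1$ (a point, contractible) and $m=2$ (two points, $S^0$).
\item For the inductive step, label the path $v_1,\ldots,v_m$ with $m\ge3$. Since $N(v_1)=\{v_2\}\subseteq N(v_3)$, the fold lemma for independence complexes lets me delete $v_3$ without changing the homotopy type.
\item Deleting $v_3$ leaves the disjoint union $P_2\sqcup P_{m-3}$. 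The independence complex of a disjoint union is the join of the independence complexes, so $\operatorname{Ind}(P_m)\simeq S^0*\operatorname{Ind}(P_{m-3})=\Sigma\operatorname{Ind}(P_{m-3})$.
\item Suspension raises the sphere dimension by one and preserves contractibility, which closes the induction.
\end{itemize}

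Finally I translate back to $i=m+2$.
\begin{itemize}
\item If $i=3k+3$, then $m=3k+1$ and the complex is contractible, so its reduced homology is trivial exactly when $3\mid i$.
\item If $i=3k+1$ or $i=3k+2$, then $m=3k-1$ or $m=3k$, giving $S^{k-1}$. In both cases $k-1=\left\lfloor\frac{i}{3}-1\right\rfloor$, which is the stated degree of the single copy of $\mathbb{Z}$.
\end{itemize}

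I expect the main obstacle to be the fold lemma, i.e.\ checking that $\operatorname{Ind}(G)\simeq\operatorname{Ind}(G\setminus w)$ whenever $N(v)\subseteq N(w)$. I will argue it directly: $\operatorname{Ind}(G)=\operatorname{Ind}(G\setminus w)\cup\bigl(w*\operatorname{Ind}(G\setminus N[w])\bigr)$, and the hypothesis puts $v$ in $\operatorname{Ind}(G\setminus N[w])$ with $v$ adjacent to nothing there, so that subcomplex is a cone with apex $v$. Gluing a cone onto $\operatorname{Ind}(G\setminus w)$ along the contractible subcomplex $\operatorname{Ind}(G\setminus N[w])$ does not change the homotopy type. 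If I only want homology, which is all the statement needs, a Mayer--Vietoris sequence for this decomposition gives the same conclusion.
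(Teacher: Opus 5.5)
Your proof is correct. It takes a different route from the paper only in that the paper gives no argument of its own. It asserts, without verification, that $\Delta_{\underline{1\ldots i}}$ consists of the subsets of $\{2,\ldots,i-1\}$ with no two adjacent entries, i.e.\ the independence complex of a path on $i-2$ vertices. It then imports the homotopy type from \cite[Proposition 4.6]{kozlov}.

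Your write-up supplies both halves. First, you check that $M_J=[i]$ and establish the identification; this is exactly where the hypothesis of at least $i+2$ generators enters, since with $n=i+1$ the generator $x_{i+1}x_1$ would divide $f_J$. Second, you rederive Kozlov's computation in a self-contained way via the fold lemma and the recursion $\operatorname{Ind}(P_m)\simeq\Sigma\operatorname{Ind}(P_{m-3})$. This yields the full homotopy type, which is more than the homology the statement needs. The citation is shorter; your version makes the hypotheses and index shifts checkable.

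Your reading of the degree as $\lfloor i/3-1\rfloor$ is the right one. The literal $\lfloor n/3-1\rfloor$, with $n$ the number of generators, is false: for $i=4$ and $n=10$ it predicts degree $2$, whereas $\Delta_{\underline{1234}}$ is two points. The $i$-version is also what the paper actually uses later, e.g.\ $\operatorname{H}^0_{\underline{1234}}$ in the case $d=6$.
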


Now consider some arbitrary $M_S$ which is not $M_{[n]}$. Note that $M_S$ can be considered as a union of sets of consecutive values (considering 1 and $4m+2$ consecutive) which is as small as possible. Then $\Delta_{M_S}$ will be isomorphic to the set union of the corresponding $\Delta_{\underline{1\ldots i}}$ complexes above, and will have homology their direct sum, whence their reduced homology will be able to be determined.

\begin{lemma}[{\cite[Proposition 5.2]{kozlov}}]\label{godhomtwo}
Say $n\geq2$ and $\boldsymbol{\mathit{f}}$ is the edge ideal of a cycle with $n$ generators. Then the reduced homology of $\Delta_{\underline{1\ldots n}}$ is zero except
\[\begin{cases}
\mathbb{Z}^2\text{ at entry }\frac{n}{3}-1&\text{when }n\equiv0\pmod3,\\
\mathbb{Z}\text{ at entry }\frac{n-1}{3}-1&\text{when }n\equiv1\pmod3,\\
\mathbb{Z}\text{ at entry }\frac{n-2}{3}&\text{when }n\equiv2\pmod3.
\end{cases}\]
\end{lemma}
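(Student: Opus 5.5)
The plan is to identify $\Delta_{\underline{1\ldots n}}$ with a familiar complex and then run a deletion/link Mayer--Vietoris induction. By definition, $\Delta_{\underline{1\ldots n}}$ consists of the sets $L=[n]\setminus K$ with $f_K=x_1\cdots x_n$. For the edge ideal of the $n$-cycle this says exactly that the edges indexed by $K$ cover every vertex of the cycle. The complement $L$ fails this precisely when it contains two cyclically consecutive indices $j,j+1$, since removing the edges $x_jx_{j+1}$ and $x_{j+1}x_{j+2}$ uncovers $x_{j+1}$. Hence $\Delta_{\underline{1\ldots n}}$ is the independence complex $\operatorname{Ind}(C_n)$ of the $n$-cycle on the index set $[n]$. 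This is the complex-theoretic form of the statement in \cite{kozlov}. The same reasoning identifies the complexes $\Delta_{\underline{1\ldots i}}$ discussed before Lemma \ref{godhom} with independence complexes of paths, so I may use Lemma \ref{godhom} to get the reduced homology of $\operatorname{Ind}(P_m)$. Normalized by the number $m$ of vertices, this says: $\operatorname{Ind}(P_m)$ is acyclic for $m\equiv1\pmod 3$, and for $m=3k-1$ or $m=3k$ it has reduced homology $\mathbb{Z}$ in degree $k-1$ only. A first step is to re-index Lemma \ref{godhom} carefully into this form, checking small cases such as $P_1$, $P_2$, $P_3$ by hand.

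Next, fix the vertex $n$ of $C_n$ and write $\Delta=\operatorname{Ind}(C_n)$ as $\operatorname{del}(n)\cup\operatorname{star}(n)$. The star is a cone, hence acyclic. The two pieces meet in $\operatorname{lk}(n)$. Deleting vertex $n$ from $C_n$ leaves a path on $n-1$ vertices, so $\operatorname{del}(n)\cong\operatorname{Ind}(P_{n-1})$. Deleting $n$ together with its neighbours $1$ and $n-1$ leaves a path on $n-3$ vertices, so $\operatorname{lk}(n)\cong\operatorname{Ind}(P_{n-3})$. The reduced Mayer--Vietoris sequence then reads
\[\cdots\to\widetilde H_i(\operatorname{Ind}(P_{n-3}))\to\widetilde H_i(\operatorname{Ind}(P_{n-1}))\to\widetilde H_i(\Delta)\to\widetilde H_{i-1}(\operatorname{Ind}(P_{n-3}))\to\widetilde H_{i-1}(\operatorname{Ind}(P_{n-1}))\to\cdots.\]

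The three cases follow from this sequence.
\begin{itemize}
\item If $n=3k+1$, then $P_{n-3}=P_{3k-2}$ is acyclic and $P_{n-1}=P_{3k}$ has $\mathbb{Z}$ in degree $k-1=\frac{n-1}{3}-1$. Hence $\widetilde H(\Delta)\cong\widetilde H(\operatorname{Ind}(P_{3k}))$.
\item If $n=3k+2$, then $P_{n-1}=P_{3k+1}$ is acyclic and $P_{n-3}=P_{3k-1}$ has $\mathbb{Z}$ in degree $k-1$. The connecting map gives $\mathbb{Z}$ in degree $k=\frac{n-2}{3}$.
\item If $n=3k$, then $P_{n-1}=P_{3k-1}$ has $\mathbb{Z}$ in degree $k-1$ and $P_{n-3}=P_{3k-3}$ has $\mathbb{Z}$ in degree $k-2$. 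Every map between the path terms is zero because their nonzero degrees never coincide. This leaves a short exact sequence $0\to\mathbb{Z}\to\widetilde H_{k-1}(\Delta)\to\mathbb{Z}\to0$, which splits because $\mathbb{Z}$ is free. So $\widetilde H(\Delta)=\mathbb{Z}^2$ in degree $k-1=\frac n3-1$.
\end{itemize}
The small cases $n=2,3$ should be checked directly (they involve $P_0$, the empty graph whose independence complex is $\{\varnothing\}$). Since all groups are free, the same result holds over $\overline{k}$ by universal coefficients.

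The main obstacle is bookkeeping rather than topology. The statement of Lemma \ref{godhom} is indexed awkwardly: its ``entry'' is phrased via $n$ rather than $i$, and $\Delta_{\underline{1\ldots i}}$ lives on $i-2$ points. I expect the real work to be translating it into the clean path statement above and confirming that the link and deletion really are path independence complexes of the stated lengths under the cyclic adjacency $n\sim1$. If that translation turns out inconsistent, the fallback is to reprove the path statement independently. For this, use the same Mayer--Vietoris argument at an endpoint: $\operatorname{Ind}(P_m)$ is the suspension of $\operatorname{Ind}(P_{m-3})$ via the fold lemma, with base cases $m\le 3$.
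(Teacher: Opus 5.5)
Your proposal is correct. It also does more than the paper, which gives no proof: it quotes Kozlov's Proposition 5.2 and tacitly uses the identification $\Delta_{\underline{1\ldots n}}=\operatorname{Ind}(C_n)$, visible when it describes $\Delta_{\underline{123456}}$ as all subsets ``without adjacent pairs (including the pair $\underline{16}$)''.

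You make that identification explicit and deduce the cycle case from the path case. The deletion/link Mayer--Vietoris sequence at one vertex does this, with star a cone, deletion $\operatorname{Ind}(P_{n-1})$ and link $\operatorname{Ind}(P_{n-3})$. The three-case analysis is right, including the vanishing of the maps between path terms and the split extension giving $\mathbb{Z}^2$ when $n=3k$, and you correctly flag $n=2,3$ as needing direct checks.

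You are also right to distrust Lemma \ref{godhom} as written. Its degree $\lfloor n/3-1\rfloor$ must be read as $\lfloor i/3-1\rfloor$. Your normalization is the correct reading: $\operatorname{Ind}(P_m)$ with $m=i-2$ is acyclic for $m\equiv 1 \pmod 3$, and has $\mathbb{Z}$ in degree $k-1$ for $m\in\{3k-1,3k\}$.

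One imprecision is in your fallback. Mayer--Vietoris at an endpoint of $P_m$ relates $P_m$ to $P_{m-1}$ and $P_{m-2}$, not directly to $P_{m-3}$. The clean suspension $\widetilde H_i(\operatorname{Ind}(P_m))\cong\widetilde H_{i-1}(\operatorname{Ind}(P_{m-3}))$ comes from splitting at the vertex adjacent to an endpoint. There the deletion is a cone on the now-isolated endpoint and the link is $\operatorname{Ind}(P_{m-3})$, which is what the fold lemma encodes.

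As for the comparison: Kozlov's results give homotopy types (wedges of spheres), more than the paper needs. Your argument yields only reduced homology, but it reduces Proposition 5.2 to the path case, and with the corrected fallback it is fully self-contained. Reduced homology is exactly the input used in the spectral sequence computations of Section \ref{edge}.
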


In both of these cases, by the universal coefficient theorem, the reduced cohomologies and reduced homologies of these simplicial complexes will be isomorphic. Additionally, before continuing, it is worth noting that \cite{kozlov} contains information regarding the generating simplices of these simplicial complexes, which, should this path of inquiry prove fruitful, may be worth investigating further, especially given that we are working primarily with cohomology.

We also wish to elaborate a bit on the calculation and simplification of $\operatorname{ksgn}$. Note that $\operatorname{ksgn}(S)$ seeks the number of elements of $S$ which have even indices in $\operatorname{sort}(M_S\setminus S)$. We note that this is the sign of the permutation which takes the reversed $\operatorname{sort}(S)$, which we will write as $\operatorname{rsort}(S)$, and concatenates $\operatorname{sort}(M_S\setminus S)$ to it:
\[\operatorname{ksgn}(S)=\operatorname{sgn}(\operatorname{rsort}(S)\operatorname{sort}(M_S\setminus S)).\]
This will ease some of our sign-calculation pains moving forward.

Finally, as theorized in private communication with E.~Grifo, we have the following lemma which encapsulates a lower bound on our support varieties which will meaningfully simplify the remainder of our calculations:
\begin{lemma}
Any edge ideal of size $4m+2$ has in its support variety that given by $\mathcal{V}(a_{\underline{13\ldots(4m+1)}}+a_{\underline{24\ldots(4m+2)}})$. 
\end{lemma}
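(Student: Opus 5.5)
The plan is to use the equidegree machinery: exhibit nonzero homology of $T^\Sigma(\boldsymbol{\mathit{f}},a)$ at every $a\neq 0$ with $a_{\underline{13\ldots(4m+1)}}+a_{\underline{24\ldots(4m+2)}}=0$, and conclude by Theorem \ref{threefive}. The edge ideal of a cycle is equigenerated, so Lemma \ref{220} gives a weak grading $\Sigma$ and Theorem \ref{doublecomplex} gives the double complex whose naïve totalization is $T^\Sigma(\boldsymbol{\mathit{f}},a)$. Write $n=4m+2$, $O=\{1,3,\ldots,4m+1\}$ and $E=\{2,4,\ldots,4m+2\}$.

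The two sets have the following properties.
\begin{itemize}
\item Both consist of pairwise coprime generators.
\item Both have $|O|=|E|=2m+1$.
\item Both satisfy $f_O=f_E=x_1\cdots x_n=f_{[n]}$.
\end{itemize}
So both $O$ and $E$ lie in $S_{[n]}$, and $T_\varnothing$ reaches $T_{[n]}$ in the Taylor subcomplex graph by forward paths of the same length $2m+1$. Following such a path along the inter-Taylor-subcomplex maps multiplies by $a_O$ (respectively $a_E$) and carries $c_\varnothing$ to $\pm c_O$ (respectively $\pm c_E$).

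I would filter the double complex by rows, i.e.\ by $\Sigma$. The $E_1$ page is the horizontal homology, which by Theorem \ref{doublecomplex} and Lemma \ref{ksgnrel} is a direct sum over $M_J$ of shifted reduced cohomologies $\widetilde{H}^\bullet(\Delta_J,\overline{k})$.
\begin{itemize}
\item The row of $T_\varnothing$ contributes one class $[c_\varnothing]$, since $\Delta_\varnothing=\{\varnothing\}$ has reduced cohomology $\overline{k}$ in degree $-1$.
\item The summands $T_{M_S}$ with $M_S\neq[n]$ contribute cohomology computed by Lemma \ref{godhom}, using the decomposition of $\Delta_{M_S}$ into path pieces.
\item $T_{[n]}$ contributes the cohomology given by Lemma \ref{godhomtwo}.
\end{itemize}
The key local computation is that $c_O'$ and $c_E'$ are cocycles of $\Delta_{[n]}$ and represent the same top class up to a sign, which I would pin down with the identity $\operatorname{ksgn}(S)=\operatorname{sgn}(\operatorname{rsort}(S)\operatorname{sort}(M_S\setminus S))$. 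This class must moreover be nonzero, which I would check by seeing that it lies in the degree where Lemma \ref{godhomtwo} places the cohomology.

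Granting this, the zig-zag (higher) differential from $[c_\varnothing]$ to the $T_{[n]}$ row is
\[\pm\,(a_O+a_E)\,[c_O] + (\text{contributions of other paths}).\]
The signs are arranged, via the ksgn computation, so that the two leading terms combine into $a_O+a_E$ rather than $a_O-a_E$. On the hypersurface this term vanishes, so I would argue that either $[c_\varnothing]$ or its would-be target $[c_O]$ survives to $E_\infty$. Hence $H(T^\Sigma(\boldsymbol{\mathit{f}},a))\neq0$, which is what Theorem \ref{threefive} requires.

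The main obstacle is controlling everything else: the intermediate differentials $d_r$ out of $[c_\varnothing]$ for $r<2m+1$, and the zig-zags through intermediate subcomplexes $T_{M_S}$ that carry nonzero cohomology by Lemma \ref{godhom}. These could kill $[c_\varnothing]$ earlier or add extra terms to its image in $T_{[n]}$. My first attempt would avoid the spectral sequence bookkeeping by writing an explicit total cycle. I would start from $c_\varnothing$ and correct it degree by degree by solving the horizontal equations in each $T_{M_S}$, using acyclicity where Lemma \ref{godhom} gives trivial cohomology. Where it does not, I would use the restriction $a_O=-a_E$, together with the parity $n\equiv 2\pmod 4$, to match signs along the two alternating paths. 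If a general argument stalls, I would use Corollary \ref{components} to restrict to the connected component of $T_\varnothing$ and carry out the $n=6$ case by hand as a template.
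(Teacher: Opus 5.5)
There is a genuine gap. The mechanism you describe cannot produce the containment, and it fails at three concrete points.

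\emph{First, $[c_\varnothing]$ does not survive to $E_2$.} No other generator divides $x_jx_{j+1}$, so $\Delta_{\underline{j}}=\{\varnothing\}$ and row $-1$ of $E_1$ contains $\bigoplus_j\overline{k}\,c_{\underline{j}}$. Hence $d_1[c_\varnothing]=\sum_j a_j[c_{\underline{j}}]\neq 0$ whenever $a\neq0$, and there is no higher differential out of $[c_\varnothing]$ at all. The same computation kills your fallback. A total cycle $c_\varnothing+x_1+\cdots$ would need $d_h(x_1)=-\sum_j a_jc_{\underline{j}}$ with $x_1$ in bidegree $(1,-1)$. That spot is spanned only by the $b_{\{j,j+1\}}$, and each of these is the whole one-dimensional complex $T_{\{j,j+1\}}$, with zero horizontal differential.

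\emph{Second, the degrees do not match.} The vector $c_O$ has bidegree $(0,-(2m+1))$ and total degree $-(2m+1)$. Every $d_r$ out of $c_\varnothing$ lands in total degree $-1$; in the $T_{[n]}$ row this means among the $c_K$ with $|K|=4m+1$, the $0$-cochains of $\Delta_{[n]}$. Summing the compositions of inter-Taylor-subcomplex maps along paths just computes $d_v^{2m+1}(c_\varnothing)$, which is $0$ because $d_v^2=0$. So $(a_O+a_E)[c_O]$ is not a differential of anything.

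\emph{Third, your ``key local computation'' is false.} The cochains $c_O'=E^\vee$ and $c_E'=O^\vee$ live on top-dimensional ($2m$-dimensional) facets of $\Delta_{[n]}$. By Lemma \ref{godhomtwo}, the only reduced cohomology of $\Delta_{[n]}$ sits in a degree strictly below $2m$ for every $m\geq1$; for $n=6$ it is $\widetilde{H}^1\cong\overline{k}^2$ and $\widetilde{H}^2=0$. So $[c_O']=[c_E']=0$.

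The missing idea is to locate the homology in a small piece of the complex that can be controlled completely, rather than in a long zig-zag from $T_\varnothing$ to $T_{[n]}$. The paper takes
$S^+=\{j: j\equiv 1,2\pmod 4\}$, $S^\ell=S^+\setminus\{1\}$, $S^r=S^+\setminus\{4m+2\}$, $S^-=S^+\setminus\{4m+1,4m+2\}$,
together with their $\langle\sigma^2\rangle$-translates (equal to $\langle\sigma^4\rangle$ since $n\equiv2\pmod 4$). Three facts then hold. The $4m+2$ vectors $b_{S^\ell_i},b_{S^r_i}$ are $d_a$-cycles: no element can be removed without changing the LCM, and the support of $f_{S^+}$ omits no two consecutive variables. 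The only basis vectors whose images involve them are the $4m+2$ vectors $b_{S^\pm_i}$. Finally, $d_a(b_{S^+_i})=b_{S^\ell_i}-b_{S^r_i}$ and $d_a(b_{S^-_i})=a_{i-1}b_{S^r_i}+a_{i-2}b_{S^\ell_{i-4}}$. Hence the boundaries meet the span of these cycles exactly in the image of a cyclic bidiagonal square matrix with determinant $a_O+a_E$. On the hypersurface some cycle in that span is not a boundary, and no spectral sequence or knowledge of $\widetilde{H}^\bullet(\Delta_{[n]})$ is needed.

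If you want to keep a spectral-sequence argument, you need a source class that actually survives past $E_1$. In the paper's $n=6$ computation these are classes such as $a_1c_{\underline{1}}+a_4c_{\underline{4}}$ in row $-1$. You also need a target in the degree where Lemma \ref{godhomtwo} places the cohomology. Carrying this out for general $m$ is much harder than the determinant argument.
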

\begin{proof}For $i\in\mathbb{Z}$, let
\begin{itemize}
\item $S_i^+=\sigma^i\left(\{i\in[4m+2]\ |\ i\in1,2\pmod{4}\}\right)$,
\item $S_i^\ell=\sigma^i\left(S\setminus \{1\}\right)$,
\item $S_i^r=\sigma^i\left(S\setminus \{4m+2\}\right)$, and
\item $S_i^-=\sigma^i\left(S\setminus \{4m+1,4m+2\}\right)$.
\end{itemize}
Consider the subspace of $T^{\Sigma}(\boldsymbol{\mathit{f}},a)$ spanned by the set of basis elements
\[\bigcup\langle\sigma^2\rangle \{b_{\operatorname{sort}(S^\ell)},b_{\operatorname{sort}(S^r)}\}\]
where $\sigma$ is the permutation $(1\ 2 \ldots 4m+2)$. Its intersection with $d_aT^{\Sigma}(\boldsymbol{\mathit{f}},a)$ is zero, and its image under $d_a$ in $T^{\Sigma}(\boldsymbol{\mathit{f}},a)$ lies in the subspace with basis
\[\langle\sigma^2\rangle b_{S^+}\cup\langle\sigma^2\rangle b_{S^-}.\]
\noindent These two subspaces have dimension $4m+2$, so it suffices to show that the square matrix between these given by $d_a$ has zero determinant when $a_{\underline{13\ldots(4m+1)}}+a_{\underline{24\ldots(4m+2)}}=0$. We will let $a_{i(4m+2)+j}$ refer to $a_j$ for $T$ an ordered set and $i,j\in\mathbb{Z}$. We have
\[d_a\left(b_{S_i^-}\right)=a_{i-1}b_{S_i^r}+a_{i-2}b_{S_{i-4}^\ell}\qquad\text{and}\qquad d_a\left(b_{S_i^+}\right)=-b_{S_i^r}+b_{S_i^\ell},\]
so the map between these given by $d_a$ is given by the matrix
\[
\begin{blockarray}{cccccccc}
& b_{S^-} & b_{S^+} & b_{S_4^-} & b_{S_4^+} & b_{S_8^-} & & b_{S_{-4}^+} \\
\begin{block}{c[ccccccc]}
  b_{S^r} & a_{4m+1} & -1 & 0 & 0 & 0 & \cdots & 0 \\
  b_{S^\ell} & 0 & 1 & a_2 & 0 & 0 & \cdots & 0 \\
  b_{S_4^r} & 0 & 0 & a_3 & -1 & 0 & \cdots & 0 \\
  b_{S_4^\ell} & 0 & 0 & 0 & 1 & a_6 & \cdots & 0 \\
  b_{S_8^r} & 0 & 0 & 0 & 0 & a_7 & \cdots & 0 \\
   & \vdots & \vdots & \vdots & \vdots & \vdots & \ddots & \vdots \\
  b_{S_{-4}^\ell} & a_{4m} & 0 & 0 & 0 & 0 & 0 & 1 \\
\end{block}
\end{blockarray}
\]
which has determinant $a_{\underline{13\ldots4m+1}}+a_{\underline{24\ldots4m+2}}$, completing our proof.
\end{proof}

\subsection{The case \texorpdfstring{$\bm{d=6}$}{d=6}}
Let $d=6$, let $\boldsymbol{\mathit{f}}=x_{\underline{12}},x_{\underline{23}},\ldots,x_{\underline{61}}$, and let $\sigma$ be the permutation $(1\ 2\ldots6)$. Our sets $M_J$ are:
\begin{itemize}
\item $\varnothing$,
\item $\langle\sigma\rangle\underline{1}$, of which there are 6,
\item $\langle\sigma\rangle\underline{12}$, of which there are 6,
\item $\langle\sigma\rangle\underline{123}$, of which there are 6,
\item $\langle\sigma\rangle\underline{1234}$, of which there are 6,
\item $\underline{123456}$, of which there is 1,
\item $\langle\sigma^2\rangle \underline{14}$, of which there are 3.
\end{itemize}
A quotiented version of our weakly gradable Taylor subcomplex graph, which we have quotiented by a partition such that the graph induced by any two subsets of our partition is either empty or a unidirectional complete bipartite graph on the two subsets, is as follows:
\[
\begin{tikzcd}
	&& {\langle\sigma\rangle T_{\underline{123}}} \\
	{T_{\varnothing}} & {\langle\sigma\rangle T_{\underline{1}}} && {T_{\underline{123456}}} \\
	&& {\langle\sigma^2\rangle T_{\underline{14}}} \\
	{\langle\sigma\rangle T_{\underline{12}}} & {\langle\sigma\rangle T_{\underline{1234}}}
	\arrow[from=1-3, to=2-4]
	\arrow[from=2-1, to=2-2]
	\arrow[from=2-2, to=1-3]
	\arrow[from=2-2, to=3-3]
	\arrow[from=4-1, to=4-2]
\end{tikzcd}
\]
This graph has two connected components, which by Corollary \ref{components} we may treat separately. We consider first the component with five entries. We get the following double complex from Theorem \ref{doublecomplex}:
\[
\begin{tikzcd}
	{T_{\varnothing}^{\Sigma}} \\
	{\bigoplus\langle\sigma\rangle T_{\underline{1}}^{\Sigma}} \\
	{\bigoplus\langle\sigma\rangle T_{\underline{123}}^{\Sigma}\oplus \bigoplus\langle\sigma^2\rangle T_{\underline{14}}^{\Sigma}} \\
	{T_{\underline{123456}}^{\Sigma}}
	\arrow[from=1-1, to=2-1]
	\arrow[from=2-1, to=3-1]
	\arrow[from=3-1, to=4-1]
\end{tikzcd}
\]
Given this double complex, we can determine if its totalization has any homology using spectral sequences. Here we exploit the relationship between our $T_J^\Sigma$ complexes and reduced cellular cochain complexes. Let us first calculate our $\Delta_J$ so we can better understand their homology:
\begin{itemize}
\item $\Delta_\varnothing$, each of $\langle\sigma\rangle \Delta_{\underline{1}}$, and each of $\langle\sigma^2\rangle \Delta_{\underline{14}}$ are $\varnothing$ — no other $f_J$ are the same as those from these subsets of $\boldsymbol{\mathit{f}}$.
\item $\Delta_{\underline{123}}$ is the point 2, which we can write $\dotseg{2}$. $\langle\sigma\rangle$ can be applied to this to get the others.
\item $\Delta_{\underline{123456}}$ consists of all subsets of \underline{123456} without adjacent pairs (including the pair \underline{16}), which looks like the following:
\[
\begin{tikzpicture}[scale=.4,line width=.75pt]
  \coordinate (1) at (0,1);
  \coordinate (5) at (0,-1);
  \coordinate (4) at (4,1);
  \coordinate (2) at (4,-1);
  \coordinate (3) at (1,0);
  \coordinate (6) at (3,0);

  \fill[color=black!30] (1)--(3)--(5)--cycle;
  \fill[color=black!30] (4)--(6)--(2)--cycle;

  \draw (1) -- (4) -- (2) -- (5) -- cycle;

  \draw (1) -- (3) -- (5);
  \draw (4) -- (6) -- (2);
  \draw (3) -- (6);

  \foreach \p/\pos in {1/above left, 5/below left, 4/above right, 2/below right, 3/below right, 6/below left}
    \node[circle,fill,scale=.4,label={[label distance=-4pt]\pos: {\scriptsize \p}}] at (\p) {};
\end{tikzpicture}
\]
\end{itemize}
In each of these cases, the reduced homology will be free, so our reduced cohomologies will be isomorphic by the universal coefficient theorem:
\begin{itemize}
\item $\varnothing$ has $\operatorname{H}^{-1}(\varnothing)\cong\mathbb{Z}$ and no other cohomology,
\item $\dotseg{2}$ has no reduced cohomology,
\item $\Delta_{\underline{123456}}$ has $\operatorname{H}^{1}(\Delta_{\underline{123456}})\cong\mathbb{Z}^2$ and no other cohomology.
\end{itemize}
We begin by taking horizontal derivatives. Letting $\operatorname{H}_J^i$ denote the reduced $i$-th cohomology of $\Delta_J$, our $E_1$ page will be isomorphic to the following, where our isomorphisms from $\operatorname{H}_J^{i-1}$ to $(\operatorname{H}(T_J^{\Sigma}))_{|M_J|-\Sigma_{T_J}-i}$ are inherited from the map taking $c_J'$ to $c_J^{\Sigma}$, where all non-zero entries are shown:
\[
\begin{tikzcd}
	0 & {\operatorname{H}_{\varnothing}^{-1}}\\
	0 & {\bigoplus\langle\sigma\rangle\operatorname{H}^{-1}_{\underline{1}}} \\
	0 & {\bigoplus\langle\sigma^2\rangle\operatorname{H}_{\underline{14}}^{-1}}\\
	{\operatorname{H}_{\underline{123456}}^1} & 0
	\arrow[from=1-1, to=1-2]
	\arrow[from=1-1, to=2-1]
	\arrow[from=1-2, to=2-2]
	\arrow[from=2-1, to=2-2]
	\arrow[from=2-1, to=3-1]
	\arrow[from=2-2, to=3-2]
	\arrow[from=3-1, to=3-2]
	\arrow[from=3-1, to=4-1]
	\arrow[from=3-2, to=4-2]
	\arrow[from=4-1, to=4-2]
\end{tikzcd}
\]
Next we construct $E_2$. We fix a basis for each of our non-zero terms in order to define maps between our entries as matrices:
\begin{itemize}
\item $\operatorname{H}_{\varnothing}^{-1}$ is generated by $c_\varnothing'$,
\item $\langle\sigma\rangle\operatorname{H}_{\underline{1}}^{-1}$ are generated by $\langle\sigma\rangle c_{\underline{1}}'$,
\item $\langle\sigma^2\rangle\operatorname{H}_{\underline{14}}$ are generated by $\langle\sigma^2\rangle c_{\underline{14}}'$,
\item $\operatorname{H}_{\underline{123456}}^1$ is generated by $c_{\underline{2356}}'$ and $-c_{\underline{1346}}'$.
\end{itemize}
\noindent Note that this includes generators such as $c_{\underline{52}}'=-c_{\underline{25}}'$. We can now define our $E_1$ maps by recalling that when non-zero, the value of $b_jc_J=e_jc_J$ is 
\[\operatorname{ksgn}(j,J)\operatorname{sgn}(\{j\}J)c_{\{j\}\cup J},\]
and our vertical maps are directly inherited from these multiplication maps:
\[\operatorname{H}_{\varnothing}^{-1}\to \bigoplus\langle\sigma\rangle\operatorname{H}_{\underline{1}}^{-1}:\begin{bmatrix}a_1\\a_2\\a_3\\a_4\\a_5\\a_6\end{bmatrix},\qquad\bigoplus\langle\sigma\rangle\operatorname{H}_{\underline{1}}^{-1}\to\bigoplus\langle\sigma\rangle\operatorname{H}_{\underline{14}}^{-1}:\begin{bmatrix}
a_4&0&0&-a_1&0&0\\
0&a_5&0&0&-a_2&0\\
0&0&a_6&0&0&-a_3
\end{bmatrix}.\]
The first of these maps is injective. The second map is surjective so long as no $i$ satisfies $a_i=a_{i+3}=0$. If this were the case, then $\langle\sigma^2\rangle\operatorname{H}_{\underline{14}}^{-1}$ will beget a non-zero $E_2$ entry, and consequently a non-zero $E_{\infty}$ entry. Thus we record that the points $a$ with $a_i=a_{i+3}=0$ for some $i$ lie in our support variety, and henceforth assume this is not the case. Our $E_2$ page is isomorphic to the following, where all non-zero entries are shown:
\[
\let\amsamp=&
\begin{tikzcd}
	0 & \begin{array}{c} \ker\begin{bmatrix} a_4\amsamp 0\amsamp 0\amsamp -a_1\amsamp 0\amsamp 0\\ 0\amsamp a_5\amsamp 0\amsamp 0\amsamp -a_2\amsamp 0\\ 0\amsamp 0\amsamp a_6\amsamp 0\amsamp 0\amsamp -a_3 \end{bmatrix}/\operatorname{im}\begin{bmatrix}a_1\\a_2\\a_3\\a_4\\a_5\\a_6\end{bmatrix} \end{array}\\
	0 & 0\\
	{\operatorname{H}_{\underline{123456}}^1} & 0
	\arrow[from=1-2, to=2-2]
	\arrow[from=2-2, to=3-2]
	\arrow[from=1-1, to=2-1]
	\arrow[from=2-1, to=3-1]
	\arrow[from=1-1, to=1-2]
	\arrow[from=2-1, to=2-2]
	\arrow[from=3-1, to=3-2]
\end{tikzcd}\]
The top-right entry is generated by $a_1c_{\underline{1}}^{\Sigma}+a_4c_{\underline{4}}^\Sigma$ and $a_2c_{\underline{2}}^{\Sigma}+a_5c_{\underline{5}}^{\Sigma}$, and thus are left only to determine the images of these in the entry isomorphic to $H_{\underline{123456}}^1$ under the $E_2$ map.

The images of these values under the vertical map are
\[-a_{\underline{13}}c_{\underline{13}}^{\Sigma}+a_{\underline{15}}c_{\underline{15}}^{\Sigma}+a_{\underline{24}}c_{\underline{24}}^{\Sigma}-a_{\underline{46}}c_{\underline{46}}^{\Sigma}\qquad\text{and}\qquad -a_{\underline{24}}c_{\underline{24}}^{\Sigma}+a_{\underline{26}}c_{\underline{26}}^{\Sigma}+a_{\underline{35}}c_{\underline{35}}^{\Sigma}-a_{\underline{15}}c_{\underline{15}}^{\Sigma}.\]
Noting that the $c_J'$ corresponding to these $c_J^{\Sigma}$ are all points, we can pull these back to achieve
\[-a_{\underline{13}}c_{\underline{123}}^{\Sigma}+a_{\underline{15}}c_{\underline{156}}^{\Sigma}+a_{\underline{24}}c_{\underline{234}}^{\Sigma}-a_{\underline{46}}c_{\underline{456}}^{\Sigma}\qquad\text{and}\qquad -a_{\underline{24}}c_{\underline{234}}^{\Sigma}+a_{\underline{26}}c_{\underline{126}}^{\Sigma}+a_{\underline{35}}c_{\underline{345}}^{\Sigma}-a_{\underline{15}}c_{\underline{156}}^{\Sigma}.\]
Finally, we take the image of these under our vertical map one more time, yielding
\[a_{\underline{135}}c_{\underline{1235}}^{\Sigma}-a_{\underline{135}}c_{\underline{1356}}^{\Sigma}-a_{\underline{246}}c_{\underline{2346}}^{\Sigma}-a_{\underline{246}}c_{\underline{2456}}^{\Sigma}\qquad\text{and}\qquad a_{\underline{246}}c_{\underline{2346}}^{\Sigma}+a_{\underline{246}}c_{\underline{1246}}^{\Sigma}+a_{\underline{135}}c_{\underline{1345}}^{\Sigma}+a_{\underline{135}}c_{\underline{1356}}^{\Sigma}.\]
Under our isomorphism with our reduced cellular cochain complex, we can write this as the following weighted sum of segments in $\Delta_{\underline{123456}}$:
\[
\begin{tabular}{@{}c@{}}\begin{tikzpicture}[scale=.7,line width=.75pt]
  \coordinate (1) at (0,1);
  \coordinate (5) at (0,-1);
  \coordinate (4) at (4,1);
  \coordinate (2) at (4,-1);
  \coordinate (3) at (1,0);
  \coordinate (6) at (3,0);

  \fill[color=black!30] (1)--(3)--(5)--cycle;
  \fill[color=black!30] (4)--(6)--(2)--cycle;

  \draw (1) -- (4) -- (2) -- (5) -- cycle;

  \draw (1) -- (3) -- (5);
  \draw (4) -- (6) -- (2);
  \draw (3) -- (6);

  \foreach \p/\pos in {1/above left, 5/below left, 4/above right, 2/below right, 3/below right, 6/below left}
    \node[circle,fill,scale=.4,label={[label distance=-4pt]\pos: {\scriptsize \p}}] at (\p) {};
	\node at (3.1,.6) {\scriptsize $a_{\underline{135}}$};
	\node at (4.6,0) {\scriptsize $-a_{\underline{135}}$};
	\node at (-.6,0) {\scriptsize $-a_{\underline{246}}$};
	\node at (1,.6) {\scriptsize $-a_{\underline{246}}$};
\end{tikzpicture}\end{tabular}\qquad\text{and}\qquad\begin{tabular}{@{}c@{}}\begin{tikzpicture}[scale=.7,line width=.75pt]
  \coordinate (1) at (0,1);
  \coordinate (5) at (0,-1);
  \coordinate (4) at (4,1);
  \coordinate (2) at (4,-1);
  \coordinate (3) at (1,0);
  \coordinate (6) at (3,0);

  \fill[color=black!30] (1)--(3)--(5)--cycle;
  \fill[color=black!30] (4)--(6)--(2)--cycle;

  \draw (1) -- (4) -- (2) -- (5) -- cycle;

  \draw (1) -- (3) -- (5);
  \draw (4) -- (6) -- (2);
  \draw (3) -- (6);

  \foreach \p/\pos in {1/above left, 5/below left, 4/above right, 2/below right, 3/below right, 6/below left}
    \node[circle,fill,scale=.4,label={[label distance=-4pt]\pos: {\scriptsize \p}}] at (\p) {};
	\node at (3.1,-.6) {\scriptsize $a_{\underline{135}}$};
	\node at (4.5,0) {\scriptsize $a_{\underline{135}}$};
	\node at (-.5,0) {\scriptsize $a_{\underline{246}}$};
	\node at (1,-.6) {\scriptsize $a_{\underline{246}}$};
\end{tikzpicture}
\end{tabular}
\]
These are the following, up to a cocycle:
\[
\begin{tabular}{@{}c@{}}\begin{tikzpicture}[scale=.7,line width=.75pt]
  \coordinate (1) at (0,1);
  \coordinate (5) at (0,-1);
  \coordinate (4) at (4,1);
  \coordinate (2) at (4,-1);
  \coordinate (3) at (1,0);
  \coordinate (6) at (3,0);

  \fill[color=black!30] (1)--(3)--(5)--cycle;
  \fill[color=black!30] (4)--(6)--(2)--cycle;

  \draw (1) -- (4) -- (2) -- (5) -- cycle;

  \draw (1) -- (3) -- (5);
  \draw (4) -- (6) -- (2);
  \draw (3) -- (6);

  \foreach \p/\pos in {1/above left, 5/below left, 4/above right, 2/below right, 3/below right, 6/below left}
    \node[circle,fill,scale=.4,label={[label distance=-4pt]\pos: {\scriptsize \p}}] at (\p) {};
	\node at (2,1.3) {\scriptsize $a_{\underline{135}}+a_{\underline{246}}$};
\end{tikzpicture}\end{tabular}\qquad\text{and}\qquad\begin{tabular}{@{}c@{}}\begin{tikzpicture}[scale=.7,line width=.75pt]
  \coordinate (1) at (0,1);
  \coordinate (5) at (0,-1);
  \coordinate (4) at (4,1);
  \coordinate (2) at (4,-1);
  \coordinate (3) at (1,0);
  \coordinate (6) at (3,0);

  \fill[color=black!30] (1)--(3)--(5)--cycle;
  \fill[color=black!30] (4)--(6)--(2)--cycle;

  \draw (1) -- (4) -- (2) -- (5) -- cycle;

  \draw (1) -- (3) -- (5);
  \draw (4) -- (6) -- (2);
  \draw (3) -- (6);

  \foreach \p/\pos in {1/above left, 5/below left, 4/above right, 2/below right, 3/below right, 6/below left}
    \node[circle,fill,scale=.4,label={[label distance=-4pt]\pos: {\scriptsize \p}}] at (\p) {};
	\node at (2,-1.3) {\scriptsize $-a_{\underline{135}}-a_{\underline{246}}$};
\end{tikzpicture}
\end{tabular}
\]
Thus the image of the chosen basis of our source $E_2$ entry under this $E_2$ map are $a_{\underline{135}}+a_{\underline{246}}$ times the basis of our target $E_2$ entry. Thus the subcomplex of $T^\Sigma(\boldsymbol{\mathit{f}},a)$ corresponding to this component has trivial homology if and only if $a_{\underline{135}}+a_{\underline{246}}\neq0$. Note that this is true whenever $a_i=a_{i+3}=0$ for some $i$, so this subcomplex of $T^\Sigma(\boldsymbol{\mathit{f}},a)$ is exact exactly when $a_{\underline{135}}+a_{\underline{246}}\neq0$.

Now we must consider the other connected component of our given weakly gradable Taylor subcomplex graph. It yields the double complex
\[
\begin{tikzcd}
	{\bigoplus\langle\sigma\rangle T_{\underline{12}}^{\Sigma}} \\
	{\bigoplus\langle\sigma\rangle T_{\underline{1234}}^{\Sigma}}
	\arrow[from=1-1, to=2-1]
\end{tikzcd}
\]
Again we determine our $\Delta_J$ and their reduced cohomologies:
\begin{itemize}
\item Each of $\langle\sigma\rangle \Delta_{\underline{12}}$ is $\varnothing$, which has only $\operatorname{H}^{-1}=\mathbb{Z}$,
\item Each of $\langle\sigma\rangle \Delta_{\underline{1234}}$ are, respectively, $\langle\sigma\rangle\dotseg{2}\dotseg{3}$, which have $\operatorname{H}^0=\mathbb{Z}$ and no other cohomology.
\end{itemize}
Our $E_1$ page is isomorphic to the following:
\[
\begin{tikzcd}
	{\bigoplus\langle\sigma\rangle \operatorname{H}_{\underline{12}}^{-1}} \\
	{\bigoplus\langle\sigma\rangle \operatorname{H}_{\underline{1234}}^0}
	\arrow[from=1-1, to=2-1]
\end{tikzcd}
\]
We fix a basis for our non-zero terms:
\begin{itemize}
\item $\langle\sigma\rangle\operatorname{H}_{\underline{12}}^{-1}$ are generated respectively by $\langle\sigma\rangle c_{\underline{12}}'$,
\item $\langle\sigma\rangle\operatorname{H}_{\underline{1234}}^{-1}$ are generated respectively by $\langle\sigma\rangle c_{\underline{124}}'$.
\end{itemize}
Under these bases the vertical map is given by the matrix
\[
\begin{bmatrix}
-a_4&0&-a_1&0&0&0\\
0&-a_5&0&-a_2&0&0\\
0&0&-a_6&0&-a_3&0\\
0&0&0&a_1&0&a_4\\
-a_5&0&0&0&-a_2&0\\
0&a_6&0&0&0&a_3
\end{bmatrix}\,,
\]
which up to a change of basis is
\[
\begin{tabular}{@{}c@{}}$\begin{bmatrix}
a_4&a_1&0\\
0&a_6&a_3\\
a_5&0&a_2
\end{bmatrix}$\end{tabular}\oplus\begin{tabular}{@{}c@{}}$\begin{bmatrix}
a_5&a_2&0\\
0&a_1&a_4\\
a_6&0&a_3
\end{bmatrix}$\end{tabular}\,,
\]
both summands of which have determinant $a_{\underline{135}}+a_{\underline{246}}$. We have successfully proven that the cohomological support variety of this edge ideal is $\mathcal{V}(a_{\underline{135}}+a_{\underline{246}})$.

\subsection{The case \texorpdfstring{$\bm{d=10}$}{d=10}}

Let $d=10$ and let $\boldsymbol{\textit{f}}=(x_{\underline{12}},\ldots,x_{\underline{A1}})$. Furthermore, let $k$ have characterstic not equal to 2 or 5. We again know that our Taylor subcomplex graph will have two components, corresponding to the parity of the degrees of the LCMs of the sets of elements. We will start with the component yielding odd degrees. Our sets $M_J$ are as follows, where in parentheses we have given our collections of sets names based on the lengths of sets of consecutive variables in their corresponding $f_J$ values, where $\sigma$ is the permutation $(1\ 2\ldots10)$:
\begin{itemize}
\item $\langle\sigma\rangle\{\underline{12}\}$, of which there are 10 ($S(3)$),
\item $\langle\sigma\rangle\{\underline{125},\underline{126},\underline{127},\underline{128}\}$, of which there are 40 ($S(3,2)$),
\item $\bigcup\langle\sigma\rangle\{\underline{1234}\}$, of which there are 10 ($S(5)$),
\item $\bigcup\langle\sigma\rangle\{\underline{1258}\}$, of which there are 10 ($S(3,2,2)$),
\item $\bigcup\langle\sigma\rangle\{\underline{12567},\underline{12678}\}$, of which there are 20 ($S(3,4)$),
\item $\bigcup\langle\sigma\rangle\{\underline{12347},\underline{12348}\}$, of which there are 20 ($S(5,2)$),
\item $\bigcup\langle\sigma\rangle\{\underline{123456}\}$, of which there are 10 ($S(7)$), and
\item $\bigcup\langle\sigma\rangle\{\underline{123456789}\}$, of which there are 10 ($S(9)$).
\end{itemize}
Our weakly gradable Taylor subcomplex graph quotiented by the above partition is as follows:
\[\begin{tikzcd}
	&& {T_{S(3,2,2)}^{\Sigma}} \\
	& {T_{S(3,2)}^{\Sigma}} & {T_{S(3,4)}^{\Sigma}} \\
	{T_{S(3)}^{\Sigma}} & {T_{S(5)}^{\Sigma}} & {T_{S(5,2)}^{\Sigma}} & {T_{S(9)}^{\Sigma}} \\
	&& {T_{S(7)}^{\Sigma}}
	\arrow[from=2-2, to=1-3]
	\arrow[from=2-2, to=2-3]
	\arrow[from=2-2, to=3-3]
	\arrow[from=2-2, to=4-3]
	\arrow[from=2-3, to=3-4]
	\arrow[from=3-1, to=2-2]
	\arrow[from=3-1, to=3-2]
	\arrow[from=3-2, to=3-3]
	\arrow[from=3-2, to=4-3]
	\arrow[from=3-3, to=3-4]
	\arrow[from=4-3, to=3-4]
\end{tikzcd}\]
\noindent We then have the following double complex:
\[\begin{tikzcd}
	{\bigoplus T_{S(3)}^{\Sigma}} \\
	{\bigoplus T_{S(3,2)}^{\Sigma}\oplus\bigoplus T_{S(5)}^{\Sigma}} \\
	{\bigoplus T_{S(3,2,2)}^{\Sigma}\oplus\bigoplus T_{S(3,4)}^{\Sigma}\oplus\bigoplus T_{S(5,2)}^{\Sigma}\oplus\bigoplus T_{S(7)}^{\Sigma}} \\
	{\bigoplus T_{S(9)}^{\Sigma}}
	\arrow[from=1-1, to=2-1]
	\arrow[from=2-1, to=3-1]
	\arrow[from=3-1, to=4-1]
\end{tikzcd}\]

Now we take the horizontal homology using Lemma \ref{godhom}, yielding the following diagram, where all of the non-zero entries are shown:
\[\begin{tikzcd}
	0 & {\bigoplus\operatorname{H}_{S(3)}^{-1}} \\
	0 & \begin{array}{c}\bigoplus\operatorname{H}_{S(3,2)}^{-1}\\\oplus\bigoplus\operatorname{H}_{S(5)}^{0}\end{array} \\
0 & \begin{array}{c}\bigoplus\operatorname{H}_{S(3,2,2)}^{-1}\\\oplus\bigoplus\operatorname{H}_{S(5,2)}^0\end{array} \\
	{\bigoplus\operatorname{H}_{S(9)}^1} & 0
	\arrow[from=1-1, to=1-2]
	\arrow[from=1-1, to=2-1]
	\arrow[from=1-2, to=2-2]
	\arrow[from=2-1, to=2-2]
	\arrow[from=2-1, to=3-1]
	\arrow[from=2-2, to=3-2]
	\arrow[from=3-1, to=3-2]
	\arrow[from=3-1, to=4-1]
	\arrow[from=3-2, to=4-2]
	\arrow[from=4-1, to=4-2]
\end{tikzcd}\]

Now in the right-hand column, since all of our simplicial complexes are quite simple, we can see that the first non-trivial map is injective and the second is surjective, so the homology at the middle non-trivial entry in that column has rank 10. It suffices to manually find distinct entries here which we can push down, then pull back, then push down, and show that the results yield non-trivial cocycles in $\bigoplus\operatorname{H}_{S(9)}^1$. These will prove the rank of our only potentially non-trivial $E_2$ map and consequently prove the exactness of our double complex. Consider 
\begin{align*}
a_{\underline{125}}c_{\underline{125}}+a_{\underline{128}}c_{\underline{128}}-a_{\underline{248}}c_{\underline{348}}-a_{\underline{159}}c_{\underline{59A}}
\end{align*}
Pushing this down yields
\begin{align*}
&-a_{\underline{1257}}c_{\underline{1257}}+a_{\underline{1258}}c_{\underline{1258}}+a_{\underline{1259}}c_{\underline{1259}}\\
&\qquad-a_{\underline{1248}}c_{\underline{1248}}-a_{\underline{1258}}c_{\underline{1258}}+a_{\underline{1268}}c_{\underline{1268}}\\
&\qquad-a_{\underline{1248}}c_{\underline{1348}}-a_{\underline{2468}}c_{\underline{3468}}+a_{\underline{248A}}c_{\underline{348A}}\\
&\qquad+a_{\underline{1259}}c_{\underline{259A}}-a_{\underline{1359}}c_{\underline{359A}}+a_{\underline{1579}}c_{\underline{579A}}\\\\
=\,&-a_{\underline{1257}}c_{\underline{1257}}+a_{\underline{1259}}(c_{\underline{1259}}+c_{\underline{259A}})\\
&\qquad-a_{\underline{1248}}(c_{\underline{1248}}+c_{\underline{1348}})+a_{\underline{1268}}c_{\underline{1268}}\\
&\qquad-a_{\underline{2468}}c_{\underline{3468}}+a_{\underline{248A}}c_{\underline{348A}}\\
&\qquad-a_{\underline{1359}}c_{\underline{359A}}+a_{\underline{1579}}c_{\underline{579A}}.
\end{align*}
Pulling this back, which can be done manually using simplicial complexes by considering the pullbacks of the corresponding $c'$ terms, yields
\begin{align*}
&-a_{\underline{1257}}c_{\underline{12567}}+a_{\underline{1259}}c_{\underline{1259A}}\\
&\qquad-a_{\underline{1248}}c_{\underline{12348}}+a_{\underline{1268}}c_{\underline{12678}}\\
&\qquad+a_{\underline{2468}}c_{\underline{34678}}+a_{\underline{248A}}c_{\underline{3489A}}\\
&\qquad-a_{\underline{1359}}c_{\underline{3459A}}+a_{\underline{1579}}c_{\underline{5679A}},
\end{align*}
and pushing it down once more yields
\begin{align*}
&a_{\underline{12579}}c_{\underline{125679}}-a_{\underline{12579}}c_{\underline{12579A}}\\
&\qquad-a_{\underline{12468}}c_{\underline{123468}}+a_{\underline{12468}}c_{\underline{124678}}\\
&\qquad+(a_{\underline{12468}}c_{\underline{134678}}+a_{\underline{2468A}}c_{\underline{34678A}})+a_{\underline{2468A}}c_{\underline{34689A}}\\
&\qquad+a_{\underline{13579}}c_{\underline{34579A}}+(-a_{\underline{12579}}c_{\underline{25679A}}+a_{\underline{13579}}c_{\underline{35679A}})\\\\
&a_{\underline{12579}}(c_{\underline{125679}}-c_{\underline{12579A}}-c_{\underline{25679A}})\\
&\qquad+a_{\underline{12468}}(-c_{\underline{123468}}+c_{\underline{124678}}+c_{\underline{134678}})\\
&\qquad+a_{\underline{2468A}}c_{\underline{34678A}}+a_{\underline{2468A}}c_{\underline{34689A}}\\
&\qquad+a_{\underline{13579}}c_{\underline{34579A}}+a_{\underline{13579}}c_{\underline{35679A}}
\end{align*}
By considering simplicial complexes, the first two lines clearly comprise trivial cocycles, whereas the last comprises $a_{\underline{13579}}+a_{\underline{2468A}}$ times a non-trivial cocycle, proving that this subcomplex is exact as we have assumed that this value is non-zero.

Now we must consider the even-degree LCMs. We have the following $M_J$, similarly partitioned:
\begin{itemize}
\item $\varnothing$ (1),
\item $\langle\sigma\rangle\underline{1}$ ($S(2)$, 10),
\item $\langle\sigma\rangle\{\underline{14},\underline{15},\underline{16}\}$ ($S(2,2)$, 25),
\item $\langle\sigma\rangle\underline{123}$ ($S(4)$, 10),
\item $\langle\sigma\rangle\underline{147}$ ($S(2,2,2)$, 10),
\item $\langle\sigma\rangle\{\underline{1456},\underline{1567},\underline{1678}\}$ ($S(2,4)$, 30),
\item $\langle\sigma\rangle\{\underline{1256},\underline{1267}\}$ ($S(3,3)$, 15),
\item $\langle\sigma\rangle\underline{12345}$ ($S(6)$, 10),
\item $\langle\sigma\rangle\underline{145678}$ ($S(2,6)$, 10),
\item $\langle\sigma\rangle\underline{125678}$ ($S(3,5)$, 10),
\item $\langle\sigma\rangle\underline{123678}$ ($S(4,4)$, 5),
\item $\langle\sigma\rangle\underline{1234567}$ ($S(8)$, 10),
\item $\langle\sigma\rangle\underline{123456789A}$ ($S(10)$, 1).
\end{itemize}
This gives us the following:

\[\begin{tikzcd}
	{T_{\varnothing}^{\Sigma}} \\
	{\bigoplus T_{S(2)}^{\Sigma}} \\
	{\bigoplus T_{S(2,2)}^{\Sigma}\oplus\bigoplus T_{S(4)}^{\Sigma}} \\
	{\bigoplus T_{S(2,2,2)}^{\Sigma}\oplus\bigoplus T_{S(2,4)}^{\Sigma}\oplus\bigoplus T_{S(3,3)}^{\Sigma}\oplus\bigoplus T_{S(6)}^{\Sigma}} \\
	{\bigoplus T_{S(2,6)}^{\Sigma}\oplus\bigoplus T_{S(3,5)}^{\Sigma}\oplus\bigoplus T_{S(4,4)}^{\Sigma}\oplus\bigoplus T_{S(8)}^{\Sigma}} \\
	{T_{\underline{123456789A}}^{\Sigma}}
	\arrow[from=1-1, to=2-1]
	\arrow[from=2-1, to=3-1]
	\arrow[from=3-1, to=4-1]
	\arrow[from=4-1, to=5-1]
	\arrow[from=5-1, to=6-1]
\end{tikzcd}\]
Now we take the horizontal homology using Lemmas \ref{godhom} and \ref{godhomtwo}, where all of the non-zero entries are shown:

\[\begin{tikzcd}
	0 & 0 & {\operatorname{H}_{\varnothing}^{-1}} \\
	0 & 0 & {\bigoplus\operatorname{H}^{-1}_{S(2)}} \\
	0 & 0 & {\bigoplus\operatorname{H}^{-1}_{S(2,2)}} \\
	0 & \begin{array}{c} \bigoplus\operatorname{H}^{-1}_{S(3,3)}\\\oplus\bigoplus\operatorname{H}^{0}_{S(6)} \end{array} & {\bigoplus\operatorname{H}^{-1}_{S(2,2,2)}} \\
	0 & \begin{array}{c} \bigoplus\operatorname{H}^{0}_{S(2,6)}\\\oplus\bigoplus\operatorname{H}^{0}_{S(3,5)}\\\oplus\bigoplus\operatorname{H}^{1}_{S(8)} \end{array} & 0 \\
	{\operatorname{H}^{2}_{\underline{123456789A}}} & 0 & 0
	\arrow[from=1-1, to=1-2]
	\arrow[from=1-1, to=2-1]
	\arrow[from=1-2, to=1-3]
	\arrow[from=1-2, to=2-2]
	\arrow[from=1-3, to=2-3]
	\arrow[from=2-1, to=2-2]
	\arrow[from=2-1, to=3-1]
	\arrow[from=2-2, to=2-3]
	\arrow[from=2-2, to=3-2]
	\arrow[from=2-3, to=3-3]
	\arrow[from=3-1, to=3-2]
	\arrow[from=3-1, to=4-1]
	\arrow[from=3-2, to=3-3]
	\arrow[from=3-2, to=4-2]
	\arrow[from=3-3, to=4-3]
	\arrow[from=4-1, to=4-2]
	\arrow[from=4-1, to=5-1]
	\arrow[from=4-2, to=4-3]
	\arrow[from=4-2, to=5-2]
	\arrow[from=4-3, to=5-3]
	\arrow[from=5-1, to=5-2]
	\arrow[from=5-1, to=6-1]
	\arrow[from=5-2, to=5-3]
	\arrow[from=5-2, to=6-2]
	\arrow[from=5-3, to=6-3]
	\arrow[from=6-1, to=6-2]
	\arrow[from=6-2, to=6-3]
\end{tikzcd}\]
We can calculate the ranks of these maps. The top-right vertical map is injective. The map below can be seen to require the ratio of the above map in a source element in order to have image zero, so it has rank 9. The following map is surjective, as seen by considering images of $\langle\sigma\rangle c_{\underline{15}}$, so it has rank 10. The first non-zero vertical map in the middle column can be considered as follows. The images of $\bigoplus\operatorname{H}^0_{S(6)}$ biject via restriction to $\operatorname{H}^0_{S(2,6)}$. The images of $\bigoplus\operatorname{H}^{-1}_{\langle\sigma\rangle\{1267\}}$ inject via restriction to $\bigoplus\operatorname{H}^1_{S(8)}$. Thus it suffices to consider the map from $\bigoplus\operatorname{H}^{-1}_{\langle\sigma\rangle\{1256\}}$ to $\bigoplus\operatorname{H}^1_{S(3,5)}$. Since we may assume that $a_{\underline{13579}}+a_{\underline{2468A}}\neq0$, this map has full rank. To summarize, we have the following ranks of maps on our $E_1$ page:
\[\begin{tikzcd}
	0 & 0 & {\operatorname{H}_{\varnothing}^{-1}} \\
	0 & 0 & {\bigoplus\operatorname{H}^{-1}_{S(2)}} \\
	0 & 0 & {\bigoplus\operatorname{H}^{-1}_{S(2,2)}} \\
	0 & \begin{array}{c} \bigoplus\operatorname{H}^{-1}_{S(3,3)}\\\oplus\bigoplus\operatorname{H}^{0}_{S(6)} \end{array} & {\bigoplus\operatorname{H}^{-1}_{S(2,2,2)}} \\
	0 & \begin{array}{c} \bigoplus\operatorname{H}^{0}_{S(2,6)}\\\oplus\bigoplus\operatorname{H}^{0}_{S(3,5)}\\\oplus\bigoplus\operatorname{H}^{1}_{S(8)} \end{array} & 0 \\
	{\operatorname{H}^{2}_{\underline{123456789A}}} & 0 & 0
	\arrow[from=1-1, to=1-2]
	\arrow[from=1-1, to=2-1]
	\arrow[from=1-2, to=1-3]
	\arrow[from=1-2, to=2-2]
	\arrow["1", from=1-3, to=2-3]
	\arrow[from=2-1, to=2-2]
	\arrow[from=2-1, to=3-1]
	\arrow[from=2-2, to=2-3]
	\arrow[from=2-2, to=3-2]
	\arrow["9", from=2-3, to=3-3]
	\arrow[from=3-1, to=3-2]
	\arrow[from=3-1, to=4-1]
	\arrow[from=3-2, to=3-3]
	\arrow[from=3-2, to=4-2]
	\arrow["10", from=3-3, to=4-3]
	\arrow[from=4-1, to=4-2]
	\arrow[from=4-1, to=5-1]
	\arrow[from=4-2, to=4-3]
	\arrow["25", from=4-2, to=5-2]
	\arrow[from=4-3, to=5-3]
	\arrow[from=5-1, to=5-2]
	\arrow[from=5-1, to=6-1]
	\arrow[from=5-2, to=5-3]
	\arrow[from=5-2, to=6-2]
	\arrow[from=5-3, to=6-3]
	\arrow[from=6-1, to=6-2]
	\arrow[from=6-2, to=6-3]
\end{tikzcd}\]
Thus we have the following dimensions on our $E_2$ page:
\[\begin{tikzcd}
	0 & 0 & 0 \\
	0 & 0 & 0 \\
	0 & 0 & 6 \\
	0 & 0 & 0 \\
	0 & 5 & 0 \\
	1 & 0 & 0
	\arrow[from=1-1, to=1-2]
	\arrow[from=1-1, to=2-1]
	\arrow[from=1-2, to=1-3]
	\arrow[from=1-2, to=2-2]
	\arrow[from=1-3, to=2-3]
	\arrow[from=2-1, to=2-2]
	\arrow[from=2-1, to=3-1]
	\arrow[from=2-2, to=2-3]
	\arrow[from=2-2, to=3-2]
	\arrow[from=2-3, to=3-3]
	\arrow[from=3-1, to=3-2]
	\arrow[from=3-1, to=4-1]
	\arrow[from=3-2, to=3-3]
	\arrow[from=3-2, to=4-2]
	\arrow[from=3-3, to=4-3]
	\arrow[from=4-1, to=4-2]
	\arrow[from=4-1, to=5-1]
	\arrow[from=4-2, to=4-3]
	\arrow[from=4-2, to=5-2]
	\arrow[from=4-3, to=5-3]
	\arrow[from=5-1, to=5-2]
	\arrow[from=5-1, to=6-1]
	\arrow[from=5-2, to=5-3]
	\arrow[from=5-2, to=6-2]
	\arrow[from=5-3, to=6-3]
	\arrow[from=6-1, to=6-2]
	\arrow[from=6-2, to=6-3]
\end{tikzcd}\]

The subquotient of $\bigoplus\operatorname{H}^1_{S(2,2)}$ in our $E_2$ page can be generated by $\langle\sigma\rangle c_{\underline{16}}$ and $\sum\langle\sigma\rangle a_{\underline{15}}c_{\underline{15}}-\sum\langle\sigma^2\rangle a_{\underline{14}}c_{\underline{14}}$. It suffices to show that the first five of these can be taken by pushing and pulling onto a basis of our rank-5 entry in the middle column, and that the last of these can be taken by pushing and pulling onto some non-zero entry in the first column.

Let us first consider the image of $c_{\underline{16}}$. This will, up to a sign, by symmetry, give us the images of $\langle\sigma\rangle c_{\underline{16}}$. We are fortunate that signs need not be tracked in this calculation. Our first push down yields
\begin{align*}
&a_3b_{3}c_{\underline{16}}+a_4b_{4}c_{\underline{16}}a_8+b_{8}c_{\underline{16}}+a_9b_{9}c_{\underline{16}}\\
=\,&\pm a_3c_{\underline{136}}\pm a_4c_{\underline{146}}\pm a_8c_{\underline{168}}\pm a_9c_{\underline{169}}.
\end{align*}
\noindent To pull these left, it is easier to consider simplicial complexes. Each of these $c_S$ terms has a corresponding $c_S'$ in a simplicial complex consisting of a single point, thus they can be pulled left by removing the point, so pulling left yields
\[\pm a_3c_{\underline{1236}}\pm a_4c_{\underline{1456}}\pm a_8c_{\underline{1678}}\pm a_9c_{\underline{169A}}.\]
\noindent To push down for the second time, we will record only the terms coming from $S(2,6)$, which will be sufficient to determine that we have found a non-trivial cocycle. These are
\begin{align*}
&\pm a_{\underline{39}}b_9c_{\underline{1236}}\pm a_{\underline{48}}b_8c_{\underline{1456}}\pm a_{\underline{48}}b_4c_{\underline{1678}}\pm a_{\underline{39}}b_3c_{\underline{169A}}\\
=\,&\pm a_{\underline{39}}c_{\underline{12369}}\pm a_{\underline{48}}c_{\underline{14568}}\pm a_{\underline{48}}c_{\underline{14678}}\pm a_{\underline{39}}c_{\underline{1369A}}.
\end{align*}
Now it suffices to show that any cocycle with these coefficients from $S(2,6)$ is non-trivial in $E_2$, that is, that it can't be pulled back. Again, this is clear by considering simplicial complexes. Unless $a_{\underline{48}}=a_{\underline{39}}=0$, contradicting our assumption that $a_{\underline{13579}}+a_{\underline{2468A}}\neq0$, our corresponding $c_S'$ terms consist of some non-zero formal sum of points among $\Delta_{\underline{12369A}}$ and $\Delta_{\underline{145678}}$, which is a sum of either one or two non-zero scalar multiples of points on at least one of these. Since both of these simplicial complexes have 3 points, this can not be pulled back. By the same logic, no formal sum of elements of $\langle\sigma\rangle c_{\underline{16}}$ can be pulled back along this arrow. Thus this space bijects onto its target space in $E_2$.

This leaves us to prove that our remaining basis vector of this space in $E_2$, $\sum\langle\sigma\rangle a_{\underline{15}}c_{\underline{15}}-\sum\langle\sigma^2\rangle a_{\underline{14}}c_{\underline{14}}$, has a nontrivial image in $E_3$. We push down for the first time:
\begin{align*}
&\sum\langle\sigma\rangle \left(a_{\underline{715}}b_7c_{\underline{15}}+a_{\underline{815}}b_8c_{\underline{15}}+a_{\underline{915}}b_9c_{\underline{15}}\right)\\
&\qquad-\sum\langle\sigma^2\rangle\left(a_{\underline{714}}b_7c_{\underline{14}}+a_{\underline{814}}b_8c_{\underline{14}}\right)\\
=\,&\sum\langle\sigma\rangle\left(\operatorname{ksgn}(7,\underline{15})a_{\underline{715}}c_{\underline{715}}+\operatorname{ksgn}(8,\underline{15})a_{\underline{815}}c_{\underline{815}}+\operatorname{ksgn}(9,\underline{15})a_{\underline{915}}c_{\underline{915}}\right)\\
&\qquad-\sum\langle\sigma^2\rangle\left(\operatorname{ksgn}(7,\underline{14})a_{\underline{714}}c_{\underline{714}}+\operatorname{ksgn}(8,\underline{14})a_{\underline{814}}c_{\underline{814}}\right)\\
=\,&\sum\langle\sigma\rangle\left(\operatorname{ksgn}(7,\underline{15})a_{\underline{715}}c_{\underline{715}}+\operatorname{ksgn}(8,\underline{15})a_{\underline{815}}c_{\underline{815}}+\operatorname{ksgn}(9,\underline{15})a_{\underline{915}}c_{\underline{915}}\right)\\
&\qquad-\sum\langle\sigma\rangle\left(\operatorname{ksgn}(1,\underline{58})a_{\underline{815}}c_{\underline{815}}\right)
\end{align*}
Noting that $\sigma^i\operatorname{ksgn}(8,\underline{15})=\operatorname{ksgn}(1,\underline{58})=1$ for all $i$, we are left with
\begin{align*}
&\sum\langle\sigma\rangle\operatorname{ksgn}(7,\underline{15})a_{\underline{715}}c_{\underline{715}}+\operatorname{ksgn}(9,\underline{15})a_{\underline{915}}c_{\underline{915}}.
\end{align*}
\noindent We pull back for the first time using simplicial complexes:
\begin{align*}
&\sum\langle\sigma\rangle\operatorname{sgn}(7156)a_{\underline{157}}c_{\operatorname{sort}(\underline{1567})}+\operatorname{sgn}(915A)a_{\underline{159}}c_{\operatorname{sort}(\underline{159A})}.
\end{align*}
We replace our second term with a shift:
\begin{align*}
&\sum\langle\sigma\rangle\operatorname{sgn}(7156)a_{\underline{157}}c_{\operatorname{sort}(\underline{1567})}+\operatorname{sgn}(5716)a_{\underline{715}}c_{\operatorname{sort}(\underline{1567})}\\
=\,&2\sum\langle\sigma\rangle \operatorname{sgn}(7156)a_{\underline{157}}c_{\operatorname{sort}(\underline{1567})}
\end{align*}
Note that $\sigma^i\operatorname{sgn}(\underline{715})=1$ for all $i$.
We push down for the second time:
\begin{align*}
&2\sum\langle\sigma\rangle\operatorname{sgn}(37156\operatorname{sort}(24))a_{\underline{1357}}c_{\operatorname{sort}(13567)}\\
&\qquad+\operatorname{sgn}(97156\operatorname{sort}(8A))a_{\underline{1579}}c_{\operatorname{sort}(15679)}
\end{align*}
We can shift the second term for future simplicity:
\begin{align*}
&2\sum\langle\sigma\rangle\operatorname{sgn}(37156\operatorname{sort}(24))a_{\underline{1357}}c_{\operatorname{sort}(13567)}\\
&\qquad+\operatorname{sgn}(53712\operatorname{sort}(46))a_{\underline{1357}}c_{\operatorname{sort}(12357)}.
\end{align*}
We pull back for the second time:
\begin{align*}
&2\sum\langle\sigma\rangle\operatorname{sgn}(3715624)a_{\underline{1357}}\left(c_{\operatorname{sort}(\underline{123567})}-c_{\operatorname{sort}(\underline{134567})}\right)\\
&\qquad+\operatorname{sgn}(5371246)a_{\underline{1357}}\left(c_{\operatorname{sort}(\underline{123457})}-c_{\operatorname{sort}(\underline{123567})}\right)\\
=\,&2\sum\langle\sigma\rangle\operatorname{sgn}(3715624)a_{\underline{1357}}\left(2c_{\operatorname{sort}(\underline{123567})}-c_{\operatorname{sort}(\underline{134567})}-c_{\operatorname{sort}(\underline{123457})}\right).
\end{align*}
We push down for the third and final time:
\begin{align*}
&\sum\langle\sigma\rangle a_{\underline{13579}}\big(\operatorname{sgn}(9371562)2c_{\operatorname{sort}(\underline{1235679})}\\
&\qquad\qquad\qquad+\operatorname{sgn}(9371564)c_{\operatorname{sort}(\underline{1345679})}\\
&\qquad\qquad\qquad+\operatorname{sgn}(9371542)c_{\operatorname{sort}(\underline{1234579})}\big)\\
=\,&\sum\langle\sigma\rangle a_{\underline{13579}}\big(2c_{\operatorname{sort}(\underline{1235679})}\\
&\qquad\qquad\qquad-c_{\operatorname{sort}(\underline{1345679})}-c_{\operatorname{sort}(\underline{1234579})}\big).
\end{align*}
We can shift the last term:
\begin{align*}
&2\sum\langle\sigma\rangle a_{\underline{13579}}\left(c_{\operatorname{sort}(\underline{1235679})}-c_{\operatorname{sort}(\underline{1345679})}\right).
\end{align*}
\noindent Now consider $\Delta_{\underline{1\ldots A}}$. We wish to show that here,
\begin{align*}
&2\sum\langle\sigma\rangle a_{\underline{13579}}\left(c_{\operatorname{sort}(\underline{1235679})}-c_{\operatorname{sort}(\underline{1345679})}\right)
\end{align*}
has non-trivial cohomology. Let $v_{\operatorname{sort}(S)}$ be the coefficent of the simplex corresponding to $S$ in $\Delta_{1\ldots A}$ in a proposed preimage of the above cocycle and let $v_{S}=\operatorname{sgn}(S)v_{\operatorname{sort}(S)}$. Note that this cocycle has dot product $10(a_{\underline{13579}}+a_{\underline{2468A}})$ with the following formal sum of simplices:
\[\sum\langle\sigma\rangle\left(\underline{246}-2\cdot\underline{247}+\underline{258}\right).\]
\noindent This yields the following conditions imposed by our $v_S$ terms:
\[10(a_{\underline{13579}}+a_{\underline{2468A}})=\sum\langle\sigma\rangle \left(\left(2v_{\underline{24}}-v_{\underline{26}}\right)-2\left(v_{\underline{24}}-2v_{\underline{25}}\right)+2\left(v_{\underline{25}}+v_{\underline{26}}\right)\right)=0,\]
\noindent contradiction.

\section{Computational implementations}\label{compimp}
We have posted to \url{https://github.com/mgintz289/levels} a fork of the Macaulay2 package \sloppy\texttt{ThickSubcategories.m2} \cite{ThickSubcategoriesSource}. It introduces the method \texttt{equigeneratedMonomialCSV} and a test file \texttt{test-gintz-thesis.m2}.  The method
\begin{itemize}
\item takes an equigenerated monomial ideal $\boldsymbol{\mathit{f}}$ as input,
\item decomposes the double complex described by partitioning its basis by the degrees of the LCMs of its entries modulo the degree of the entries of $\boldsymbol{\mathit{f}}$,
\item computes the support of the homology of each component, and
\item returns their union.
\end{itemize}

\noindent This code builds our double complex using the values of $\Sigma_J$ using Lemma \ref{theoremareal}. The integration of our $\Sigma_J$ terms into this workflow should make it easy to rework this code to generate cohomological support varieties for other monomial ideals which can be constructed using this framework.

The test file computationally verifies Computations \ref{code1} and \ref{code2}:
\begin{computation}\label{code1}
The edge ideal of a 14-cycle over $\mathbb{Q}$ has support variety
\[\mathcal{V}(a_1a_3\cdots a_{13}+a_2a_4\cdots a_{14}).\]
\end{computation}

\begin{computation}\label{code2}
The cohomological support varieties of equigenerated monomial ideals in polynomial rings over $\mathbb{Q}$ with minimal generating sets with 6 elements are all one of the following up to order:
\begin{itemize}
\item a linear subspace,
\item a union of two hyperplanes,
\item $\mathcal{V}(a_{\underline{135}}+a_{\underline{246}})$.
\end{itemize}
\end{computation}
\noindent Of course, we have made no statements regarding non-equigenerated monomial ideals or other base fields, so we have not proven the original problem posed in \cite{embdef}.

This code constructs a collection of equigenerated monomial ideals such that every cohomological support variety of such an ideal, provided its minimal generating set has 6 generators, is represented, possibly excluding $\{0\}$ (though, in practice, this is not the case). Our construction revolves around the following
\begin{lemma}\label{codenameprin}
The cohomological support variety of a monomial ideal can be determined by whether, for any given $J\subseteq [n]$ and $J\not\ni i\in [n]$, whether $f_i$ and $f_J$ are coprime and whether the former divides the latter.
\end{lemma}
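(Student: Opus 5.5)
By Corollary \ref{ceafp} together with the choice $F=T(\boldsymbol{\mathit{f}})$, $\operatorname{V}_R(R)$ is the set of $a$ for which the $\overline{k}$-linear map $d_a$ on $T(\boldsymbol{\mathit{f}})\otimes_Q\overline{k}$ has $\ker d_a\neq\operatorname{im} d_a$, together with $0$. This condition depends only on the matrix of $d_a$ in the basis $\{b_J\ |\ J\subseteq[n]\}$. The plan is therefore to show that every entry of this matrix, viewed as a polynomial in $a_1,\ldots,a_n$, is determined by the combinatorial data in the statement: for each $J$ and each $i\notin J$, whether $f_i$ and $f_J$ are coprime and whether $f_i\mid f_J$. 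Two monomial sequences with the same data then give literally the same family of matrices $d_a$, and hence the same support variety.

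\textbf{Step 1: the intra-Taylor-subcomplex entries.} By Construction \ref{taylork}, the coefficient of $d_a$ from $b_K$ to $b_{K\setminus\{j\}}$ (for $j\in K$) is $\pm1$ when $f_K=f_{K\setminus\{j\}}$ and $0$ otherwise, with the sign determined by the position of $j$ in $K$. Set $J=K\setminus\{j\}$. Then $f_{J\cup\{j\}}=\operatorname{lcm}(f_J,f_j)$, and this equals $f_J$ exactly when $f_j\mid f_J$. So these entries are governed by the divisibility data for the pair $(J,j)$ with $j\notin J$.

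\textbf{Step 2: the inter-Taylor-subcomplex entries.} The coefficient from $b_J$ to $b_{\{j\}\cup J}$ (for $j\notin J$) is $a_j\operatorname{sgn}(\{j\}J)$ when $f_jf_J=f_{\{j\}\cup J}$, and $0$ otherwise, by the product in Construction \ref{taylork}. Since $f_{\{j\}\cup J}=\operatorname{lcm}(f_j,f_J)$, the condition $f_jf_J=\operatorname{lcm}(f_j,f_J)$ holds exactly when $\gcd(f_j,f_J)=1$. So these entries are governed by the coprimality data.

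\textbf{Step 3: no other entries.} Construction \ref{taylork} shows that $\partial$ only removes one element and multiplication by $b_j$ only adds one. Hence $d_a=\partial\otimes1+\sum_j a_j e_j$ has no other nonzero entries, and all signs depend only on $J$ and $j$ as ordered sets, not on $\boldsymbol{\mathit{f}}$. Combining Steps 1--3 gives the lemma.

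The main obstacle is a subtlety in the quantifiers: the data must be indexed by arbitrary subsets $J$, not just singletons, because $f_J$ is an lcm. Since the statement quantifies over all $J\subseteq[n]$ and all $i\notin J$, this is exactly what the statement supplies. One should also note that the data are mutually consistent: if $f_i\mid f_J$ then $f_i$ and $f_J$ are coprime only when $f_i=1$, and this case is excluded because the presentation is minimal.
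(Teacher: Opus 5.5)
Your argument is correct and is essentially the one the paper has in mind. The paper states this lemma without proof, treating it as immediate from Construction \ref{taylork} (the matrix of $d_a$ on $T(\boldsymbol{\mathit{f}})\otimes_Q\overline{k}$ has an entry $\pm1$ from $b_K$ to $b_{K\setminus\{j\}}$ exactly when $f_j\mid f_{K\setminus\{j\}}$, and an entry $\pm a_j$ from $b_J$ to $b_{\{j\}\cup J}$ exactly when $\gcd(f_j,f_J)=1$) together with Corollary \ref{ceafp}, which is what you wrote out.
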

\noindent We claim that
\begin{lemma}\label{227}
For any monomial ideal, there is a monomial ideal sharing its cohomological support variety which can by taking a collection of subsets of $[n]$, assigning each a variable, and letting $f_i$ be the product of the variables corresponding to the subsets containing $i$. Furthermore, for any equidegree minimal generating set of a monomial ideal, there is an equidegree minimal generating set of a monomial ideal of the same size sharing its cohomological support variety which can be constructed by taking a monomial ideal as described above, assigning to each variable a positive integer, and taking each variable to the power of its corresponding integer.
\end{lemma}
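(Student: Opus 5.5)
Take Lemma \ref{codenameprin} as the guiding principle: $\operatorname{V}_R(R)$ depends only on the combinatorial data recording, for every $J\subseteq[n]$ and $i\notin J$, whether $f_i$ and $f_J$ are coprime and whether $f_i\mid f_J$. Indeed, by Corollary \ref{ceafp} and Construction \ref{taylork}, the whole complex $\widehat{\mathcal{C}}_{E_a}'(T(\boldsymbol{\mathit{f}}))$ is determined by two kinds of coefficients. The intra-Taylor-subcomplex coefficients are nonzero exactly when $f_{J}=f_{J\setminus\{i\}}$, that is, when $f_i\mid f_{J\setminus\{i\}}$. The inter-Taylor-subcomplex coefficients $b_ic_J$ are nonzero exactly when $f_if_J=f_{\{i\}\cup J}$, that is, when $\gcd(f_i,f_J)=1$. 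The signs are fixed by $\operatorname{sgn}$ and $\operatorname{ksgn}$, and these depend only on the sets $M_J$, which in turn depend only on the divisibility data. So the plan is to build, from $\boldsymbol{\mathit{f}}$, a new sequence $\boldsymbol{\mathit{g}}$ of the prescribed shape carrying identical divisibility and coprimality data.

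Construction for the first claim: for each variable $x_t$ of $Q$ and each exponent level $e\ge1$, let $A_{t,e}=\{i\in[n] : a_{it}\ge e\}$. Assign a fresh variable $y_{t,e}$ to each such nonempty subset; repeated subsets may simply receive distinct variables. Set $g_i=\prod_{(t,e):\,i\in A_{t,e}} y_{t,e}$.

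Next, check that the data is preserved. Each $g_J$ is squarefree with $\operatorname{supp}(g_J)=\{(t,e): A_{t,e}\cap J\neq\varnothing\}$. This gives $g_i\mid g_J$ iff for every $(t,e)$ with $a_{it}\ge e$ some $j\in J$ has $a_{jt}\ge e$, iff $a_{it}\le\max_{j\in J}a_{jt}$ for all $t$, iff $f_i\mid f_J$. Likewise $\gcd(g_i,g_J)=1$ iff no $t$ has both $a_{it}\ge1$ and some $a_{jt}\ge1$ with $j\in J$, iff $\gcd(f_i,f_J)=1$. Minimality of the generating set is preserved because it is the case $|J|=1$ of divisibility. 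The minimal-regular-presentation hypothesis needs $\deg g_i\ge2$. If some $g_i$ has degree $1$, then $f_i$ is a pure power $x_t^{a}$ that no other generator involves with exponent $\ge1$... this is the step I expect to be delicate. Generically it fails only in such degenerate situations, and I would repair it by squaring that lone variable (the last step shows this is harmless).

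For the equidegree statement, use weights. Give $y_{t,e}$ the exponent $1$ for each level $e$, so that the powered version $\prod y_{t,e}^{1}$ has the same degree as $f_i$, namely $\sum_t a_{it}$. Alternatively, merge all levels of a variable into one variable raised to a positive integer. Either way, powering each variable by a positive integer $w$ preserves all divisibility and coprimality relations, since the supports and the componentwise-max structure are unchanged: $y^{w}\mid y^{w}$ and coprimality only sees supports. The degree of $g_i$ becomes $\#\{(t,e): a_{it}\ge e\}=\sum_t a_{it}=\deg f_i$, so equidegree is preserved with all exponents equal to $1$. The main subtlety here is making sure the "assign a positive integer" step is really needed only for the degree-$\ge2$ repair and for compressing variables, and that the repair keeps the degrees equal. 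To keep them equal, I would repair uniformly by raising every variable to the power $2$, which doubles all degrees equally. Applying Lemma \ref{codenameprin} then finishes both claims.
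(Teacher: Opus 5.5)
This is essentially the paper's proof. Your level sets $A_{t,e}=\{i: a_{it}\ge e\}$ are exactly what the paper obtains by replacing each $x_0^{p_i}$ with $\prod_{j=1}^{p_i}x_{0,j}$, and you justify the preservation of the data in Lemma \ref{codenameprin} (hence of minimality) the same way.

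The one difference is how duplicate subsets are handled. The paper deduplicates variables attached to the same subset: it deletes them for the first claim, and identifies them for the second, so that multiplicities become the positive-integer exponents. You instead keep duplicates as separate variables. This is harmless under a multiset reading of ``collection,'' and it even guarantees $\deg g_i=\deg f_i\ge 2$. So, contrary to your worry, no degree repair is ever needed in your version, whereas deletion can turn a lone $x_1^2$ into a linear generator. Your passing alternative of merging all levels of one $x_t$ into a single variable would not work, though, since different levels generally give different subsets.
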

\begin{proof}
Consider some monomial ideal and some variable $x_0$ of its underlying ring. Let $p_i$ be the power of $x_0$ in $f_i$. Then, recalling Lemma \ref{codenameprin}, we can replace $x_0^{p_i}$ with $\prod_{j=1}^{p_i}x_{0,j}$ for all $i$ without modifying our cohomological support variety, and we can do so without affecting minimality as this is given in the monomial setting by the non-existence of one generator dividing another. After doing this for each variable in our underlying ring, we may either remove redundant variables, those which are contained in the same subset of $\boldsymbol{\mathit{f}}$ as others, proving our first claim, or replace them with the variables they copy, proving the second.
\end{proof}

\noindent As an example, after our first step the sequence $\boldsymbol{\mathit{f}}=(x_1x_2^2,x_2^3,x_3^3)$ would become
\[(x_{1,1}x_{2,1}x_{2,2},x_{2,1}x_{2,2}x_{2,3},x_{3,1}x_{3,2}x_{3,3}),\]
at which point our second steps would modify it into either
\[(x_{1,1}x_{2,1},x_{2,1}x_{2,3},x_{3,1})\quad\text{or}\quad(x_{1,1}x_{2,1}^2,x_{2,1}^2x_{2,3},x_{3,1}^3).\]

We categorize ideals by their corresponding greatest common denominator (GCD) graphs \cite[Definition 6.1]{embdef}. As noted in \cite{embdef}, there are a number of GCD graphs such that all corresponding monomial ideals must have full support:
\begin{lemma}[{\cite[Lemma 6.12]{embdef}}]
Any monomial ideal whose GCD graph contains a vertex connected to every other vertex, or contains an edge such that every vertex is connected to exactly one of its vertices, has full support.
\end{lemma}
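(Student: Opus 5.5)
The plan is to use Corollary \ref{ceafp} with $F=T(\boldsymbol{\mathit{f}})$. Here the GCD graph has vertices $1,\ldots,n$, and $i,j$ are adjacent exactly when $\gcd(f_i,f_j)\neq1$. Full support means that $H(\widehat{\mathcal{C}}_{E_a}'(T(\boldsymbol{\mathit{f}})))\neq0$ for \emph{every} $a\in\mathbb{A}^n_{\overline{k}}$. So for each $a$ I will write down an explicit nonzero homology class of $d_a$ on $T(\boldsymbol{\mathit{f}})\otimes\overline{k}$. Two facts from Construction \ref{taylork} will be used repeatedly.
\begin{itemize}
\item $b_i\cdot b_J\neq0$ only when $i\notin J$ and $f_i,f_J$ are coprime.
\item An intra-Taylor-subcomplex arrow out of $b_{\{i,j\}}$ requires $f_i\mid f_j$ or $f_j\mid f_i$, which minimality of the generating set forbids.
\end{itemize}

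\textbf{First case.} Suppose, after reordering, that vertex $1$ is adjacent to every other vertex. Then $f_1$ is non-coprime to $f_J$ for every nonempty $J\not\ni1$, so $e_1$ acts by zero except on $b_\varnothing$.

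Consider $b_1$. We have $\partial b_1=0$ because $f_1\neq f_\varnothing$. For $i\neq1$ the product $b_ib_1$ vanishes since $\gcd(f_i,f_1)\neq1$, and $b_1b_1=0$; hence $d_ab_1=0$. The only arrows into $b_1$ are from $b_\varnothing$, with coefficient $a_1$; by the second fact above, no $b_{\{1,j\}}$ maps to $b_1$. Similarly, each $b_i$ receives arrows only from $b_\varnothing$. Therefore $b_1=d_a(x)$ forces the $b_\varnothing$-coefficient $c$ of $x$ to satisfy $ca_1=1$ and $ca_i=0$ for all $i\neq1$. So $b_1$ is a nonzero class unless $a=(a_1,0,\ldots,0)$ with $a_1\neq0$.

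In that remaining case $d_a=\partial+a_1e_1$. I then check that $b_2$ is a cycle: $\partial b_2=0$, and $a_1b_1b_2=0$ because $f_1,f_2$ are not coprime. Its only possible preimage is again $b_\varnothing$, whose image is $a_1b_1$, so $b_2$ is not a boundary. This uses $n\geq2$, which is implicit, since a single generator gives a hypersurface.

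\textbf{Second case.} Suppose there is an edge $\{1,2\}$ such that every vertex other than $1,2$ is adjacent to exactly one of $1,2$. The candidate class is $b_{12}$.

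First I show it is a cycle. $\partial b_{12}=0$ by minimality. For $i\neq1,2$, the generator $f_i$ shares a variable with $f_1$ or with $f_2$, hence with $f_{12}$, so $b_ib_{12}=0$. Thus $d_ab_{12}=0$ for every $a$.

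Next I look at arrows into $b_{12}$. The inter-subcomplex arrows would come from $b_1$ via $a_2$ or from $b_2$ via $a_1$, and both vanish because $\gcd(f_1,f_2)\neq1$. The only remaining incoming arrows are intra-Taylor-subcomplex arrows from $b_{\{1,2,j\}}$ with $f_j\mid f_{12}$.

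\textbf{Main obstacle.} The hard step is ruling out $b_{12}$ being a $\partial$-boundary inside $T_{12}$ once the $e_i$-terms are added. I would try two routes, in this order.
\begin{enumerate}
\item Use the hypothesis that each $f_j$ is adjacent to \emph{exactly one} of $f_1,f_2$ to show that no $j\neq1,2$ has $f_j\mid f_{12}$. Then $S_{12}=\{\{1,2\}\}$, $\Delta_{12}=\{\varnothing\}$, and $b_{12}$ spans $T_{12}$ alone, so it cannot be a boundary.
\item If such $j$ do exist, pass to the Taylor subcomplex $T_{12}$ and use Lemma \ref{ksgnrel}. In that model $b_{12}$ is the cochain $c'_{12}$ dual to the top face $M_{12}\setminus\{1,2\}$ of the simplex $\Delta_{12}$. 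I would then build a linear functional $\varphi$ on $T(\boldsymbol{\mathit{f}})\otimes\overline{k}$, supported on $T_{12}$, that vanishes on $\operatorname{im}d_a$ but not on $b_{12}$. To make $\varphi$ kill the $e_i$-contributions, I would again use that every $f_i$ meets $f_{12}$.
\end{enumerate}
Either route produces a nonzero class for every $a$, which gives full support.
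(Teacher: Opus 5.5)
Your first case is correct and complete, including the remark that $n\geq2$ is needed. In the second case, your check that $b_{\{1,2\}}$ is a $d_a$-cycle for every $a$ is also correct. Note that this part uses only that every other vertex meets \emph{at least} one of $1,2$.

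The one step you leave open, your ``main obstacle,'' is not really an obstacle, but the proposal never proves it. Route (1) settles it in one line. Suppose $j\notin\{1,2\}$ with $f_j\mid f_{\{1,2\}}=\operatorname{lcm}(f_1,f_2)$. By the ``exactly one'' hypothesis, $f_j$ is coprime to one of them, say $f_2$. Then every variable occurring in $f_j$ is absent from $f_2$, so its exponent in $f_j$ is at most its exponent in $f_1$. Hence $f_j\mid f_1$, contradicting minimality.

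So no $b_{\{1,2,j\}}$ maps to $b_{\{1,2\}}$ under $\partial$, and $M_{\{1,2\}}=\{1,2\}$. Together with what you already showed, $b_{\{1,2\}}$ is an isolated vertex of the Taylor graph, hence a nonzero class for every $a$. This is exactly the mechanism the paper invokes, via \cite[Lemma 6.9]{embdef}, in the dense-edge lemma immediately following this statement. The ``exactly one'' clause is there precisely to force $M_{\{1,2\}}=\{1,2\}$. You should write this argument out and drop route (2): it is never needed, and as written it is only a hope, since no functional $\varphi$ is actually constructed.

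For the first case, the paper gives no proof of its own; it only cites \cite{embdef}. The isolated-vertex criterion does not apply directly there. For instance, for $(xy,yz,xz)$ every vertex of the Taylor graph lies on an edge: $\varnothing$ points to each singleton, and $\{1,2,3\}$ points to each pair. So a direct computation like yours is the right kind of argument.

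You can shorten your finish slightly. The class $b_{\{1\}}$ is already nonzero on the open set where $a_i\neq0$ for some $i\neq1$, which is dense in $\mathbb{A}^n_{\overline{k}}$ when $n\geq2$. Since $\operatorname{V}_R(R)$ is closed, being the support of a finitely generated graded $\mathcal{S}$-module, this already gives full support. Your separate argument with $b_{\{2\}}$ is correct, just not necessary.
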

\noindent We use \texttt{myGraphs} to refer to the collection of graphs which is not guaranteed full support by this lemma.

In addition to these ``forbidden graphs,'' we can also determine within these graphs ``forbidden edges,'' where, if a monomial ideal constructed by Lemma \ref{codenameprin} has this GCD graph, and if there is a variable corresponding to the two vertices in that edge, then our support is full. We define these forbidden edges in terms of ``dense edges,'' such that each vertex in our graph is connected to at least one of its vertices:

\begin{lemma}
For any dense edge in our GCD graph of a monomial ideal without full support, there must be a third vertex whose monomial divides the LCM of the two comprising the dense edge in question.
\end{lemma}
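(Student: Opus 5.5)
The plan is to prove the contrapositive. Suppose $\{i,j\}$ is a dense edge of the GCD graph and no third generator $f_k$, $k\notin\{i,j\}$, divides $f_{\{i,j\}}=\operatorname{lcm}(f_i,f_j)$. I will exhibit a basis vector of $\widehat{\mathcal{C}}_{E_a}'(T(\boldsymbol{\mathit{f}}))$ that is a cycle but not a boundary for \emph{every} $a\in\mathbb{A}^n_{\overline{k}}$. By Corollary \ref{ceafp} this gives $\operatorname{V}_R(R)=\mathbb{A}^n_{\overline{k}}$, i.e.\ full support. The candidate is $b_{\{i,j\}}$. I use the convention of \cite[Definition 6.1]{embdef} that an edge of the GCD graph joins two generators exactly when they are not coprime. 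Density then means every $k\neq i,j$ shares a variable with $f_i$ or with $f_j$.

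The first step is to determine $M_{\{i,j\}}$. Since $\boldsymbol{\mathit{f}}$ is a minimal generating set, $f_i\nmid f_j$ and $f_j\nmid f_i$. Together with the hypothesis, the lemma $M_J=\{j\in[n]\mid f_j\mid f_J\}$ gives $M_{\{i,j\}}=\{i,j\}$. Hence $S_{\{i,j\}}=\{\{i,j\}\}$, because $f_{\{i\}}\neq f_{\{i,j\}}\neq f_{\{j\}}$. The Taylor subcomplex $T_{\{i,j\}}$ is therefore one-dimensional with zero intra-Taylor-subcomplex differential. This means there are no right-facing arrows out of or into $\{i,j\}$ in the Taylor graph.

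The second step is the inter-Taylor-subcomplex maps, using the description of $b_k c_J$ following Lemma \ref{ksgnrel}. An outgoing arrow $\{i,j\}\to\{i,j,k\}$ exists only if $f_kf_{\{i,j\}}=f_{\{i,j,k\}}$, that is, only if $f_k$ is coprime to $f_{\{i,j\}}$. By density, $f_k$ shares a variable with $f_i$ or $f_j$, so this never happens. An incoming arrow comes from $b_i$ (via $a_jb_j$) or from $b_j$ (via $a_ib_i$). Such an arrow requires $f_if_j=f_{\{i,j\}}$, i.e.\ $f_i$ and $f_j$ coprime. That is false because $\{i,j\}$ is an edge.

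Combining the two steps, $d_a(b_{\{i,j\}})=0$ and $b_{\{i,j\}}$ has no component in $\operatorname{im} d_a$, for every $a$. So $H(\widehat{\mathcal{C}}_{E_a}'(T(\boldsymbol{\mathit{f}})))\neq0$ everywhere, and the support is full. The only delicate point is the bookkeeping of the GCD-graph conventions: that edges mean non-coprime and that "dense" covers all vertices other than $i$ and $j$. The core argument is this isolated-vertex observation in the Taylor graph and needs no spectral sequence.
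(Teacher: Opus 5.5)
Your proof is correct and follows the paper's argument: the paper's one-line proof likewise observes that, absent such a third vertex, the Taylor-graph vertex $\{i,j\}$ is isolated, and then cites \cite[Lemma 6.9]{embdef} for full support. You carry out the isolation check explicitly (using minimality, density, and non-coprimality of $f_i,f_j$) and derive full support directly from Corollary \ref{ceafp} instead of citing that lemma.
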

\begin{proof}
If not, then the vertex of the Taylor graph corresponding to this edge is isolated, which guarantees full support by \cite[Lemma 6.9]{embdef}.
\end{proof}
\begin{corollary}\label{xecor}
For any dense edge in our GCD graph of a monomial ideal without full support, the set of variables corresponding to subsets disjoint from our dense edge must not cover all vertices connected to both vertices in our dense edge.
\end{corollary}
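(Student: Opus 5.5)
We will derive Corollary \ref{xecor} from the preceding lemma, working with monomial ideals built as in Lemma \ref{227}: each $f_i$ is the product of the variables attached to the subsets of $[n]$ that contain $i$, possibly with each variable raised to a fixed positive power. Let $\{p,q\}$ be a dense edge of the GCD graph, and suppose the ideal does not have full support.

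The preceding lemma then gives a third vertex $r\notin\{p,q\}$ with $f_r\mid f_{\{p,q\}}$. We first translate this divisibility into the language of subsets. Under the Lemma \ref{227} construction, a variable $x_U$ (attached to $U\subseteq[n]$) divides $f_r$ exactly when $r\in U$. It divides $f_{\{p,q\}}$ exactly when $U\cap\{p,q\}\neq\varnothing$. Each variable appears with the same exponent in every generator it divides, so the lcm introduces no new exponents. Hence $f_r\mid f_{\{p,q\}}$ holds if and only if every subset $U$ containing $r$ meets $\{p,q\}$. Equivalently, $r$ is not covered by the union of the subsets disjoint from $\{p,q\}$.

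Next we locate $r$ among the vertices adjacent to both $p$ and $q$. Since $f_r\neq 1$ (the ideal is proper and minimally generated), some variable divides $f_r$. Minimality also gives $f_r\nmid f_p$ and $f_r\nmid f_q$. So some variable of $f_r$ fails to divide $f_p$; since it divides $f_{\{p,q\}}$, it must divide $f_q$, which makes $r$ adjacent to $q$. Symmetrically, $r$ is adjacent to $p$. Thus $r$ is connected to both vertices of the dense edge and is not covered by the subsets disjoint from $\{p,q\}$. Therefore those subsets do not cover all common neighbours of $p$ and $q$, which is the claim.

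The step most in need of care is the translation of divisibility in the first paragraph. It relies on exponents being uniform across generators, which the construction in Lemma \ref{227} guarantees. It also requires reading ``connected to'' as adjacency in the GCD graph, i.e.\ a nontrivial gcd, consistent with \cite[Definition 6.1]{embdef}.
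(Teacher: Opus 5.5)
Your proof is correct and follows the same route as the paper. The paper's entire argument is that the third vertex supplied by the preceding lemma divides the lcm of $f_p$ and $f_q$, and hence is a common neighbour of $p$ and $q$ not covered by the subsets disjoint from the edge; you have made explicit the two steps it leaves implicit, namely the translation of $f_r\mid f_{\{p,q\}}$ into this covering condition under the Lemma~\ref{227} construction, and the use of minimality to show that $r$ is adjacent to both endpoints.
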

\begin{proof}
One of these vertices must divide the LCM of the vertices of the dense edge.
\end{proof}
\begin{lemma}\label{231}
When $n=6$ and $\boldsymbol{\mathit{f}}$ is minimal, if we have two dense edges sharing a vertex, such that the two other vertices comprising these edges are not connected in our GCD graph, then we can have no variables corresponding to edges disjoint from both of these dense edges.
\end{lemma}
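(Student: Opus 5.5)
The plan is to argue by contradiction using Corollary \ref{xecor}, together with the structure from Lemma \ref{227}. By Lemma \ref{227}, each $f_i$ is the product of variables indexed by subsets of $[6]$. Call the two dense edges $\{u,v\}$ and $\{u,w\}$, where $v$ and $w$ are not adjacent in the GCD graph, so $\gcd(f_v,f_w)=1$. Let $D=[6]\setminus\{u,v,w\}$, a set of three vertices. Suppose, for contradiction, that some variable $x$ corresponds to a subset containing two vertices $p,q\in D$, with $\{p,q\}$ disjoint from both dense edges. We want to show that $R$ then has full support. This contradicts the standing assumption, implicit in the definition of \texttt{myGraphs} and of forbidden edges, that we only consider ideals without full support.

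\textbf{Step 1.} We first record the constraints from density. Since $\{u,v\}$ is dense, every vertex of $D\cup\{w\}$ shares a variable with $f_u$ or with $f_v$. Since $v$ and $w$ share no variable, $w$ must share one with $u$; similarly $v$ shares one with $u$. The same holds with $v$ and $w$ interchanged for the edge $\{u,w\}$. Each vertex of $D$ is therefore adjacent to $u$, or else to both $v$ and $w$. Note that no vertex may be adjacent to all others, by \cite[Lemma 6.12]{embdef}. Hence $u$ misses at least one vertex $r\in D$, and this $r$ is then adjacent to both $v$ and $w$.

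\textbf{Step 2.} We apply the divisibility lemma preceding Corollary \ref{xecor} to both dense edges. For $\{u,v\}$ we need some third vertex $t$ with $f_t\mid f_{\{u,v\}}$. The vertex $w$ cannot serve, since $f_w\mid f_uf_v$ together with $\gcd(f_w,f_v)=1$ would give $f_w\mid f_u$, contradicting minimality. So $t\in D$, and by the same argument some $t'\in D$ has $f_{t'}\mid f_{\{u,w\}}$. Now $f_t$ contains the variable $x$ whenever $t\in\{p,q\}$. Corollary \ref{xecor} says that the variables disjoint from $\{u,v\}$ cannot cover all common neighbours of $u$ and $v$, and $x$ is such a variable. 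We will use this to force $t,t'\notin\{p,q\}$, whenever $p$ or $q$ lies in the relevant common neighbourhood. This pins $t=t'=$ the third vertex $s$ of $D$, so $f_s\mid\gcd(f_{\{u,v\}},f_{\{u,w\}})$. Because $\gcd(f_v,f_w)=1$, the latter is $f_u$ times coprime parts, which forces $f_s$ to be supported on variables of $f_u$ together with variables shared in a very restricted way.

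\textbf{Step 3.} We derive the contradiction. With $f_s\mid f_{\{u,v\}}$ and $f_s\mid f_{\{u,w\}}$, and $v,w$ coprime, the equidegree minimality should make $s$ adjacent to $u$, $v$ and $w$. We then examine $p$ and $q$. The edge $\{p,q\}$ carries $x$, and by Step 1 each of $p,q$ is adjacent to $u$ or to both $v,w$. This should produce either a vertex adjacent to all others, or an edge (such as $\{s,p\}$ or $\{u,p\}$) whose endpoints together dominate the graph with each vertex hit exactly once. Either outcome gives full support by \cite[Lemma 6.12]{embdef}, or an isolated Taylor-graph vertex gives full support by \cite[Lemma 6.9]{embdef}.

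The main obstacle is this final case analysis. The possible adjacencies of $p$, $q$ and $s$ to $u$, $v$ and $w$ give a handful of configurations, and I would enumerate them directly, as the bound $n=6$ keeps the count small. In each configuration I would first try to exhibit an isolated vertex of the Taylor subcomplex graph, such as $T_{\{p,q,\ldots\}}$. If that fails, I would check the two forbidden-graph conditions.
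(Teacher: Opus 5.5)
Your Steps 1 and 2 follow the paper's route and are fine up to the point where you conclude $f_s\mid\operatorname{lcm}(f_u,f_v)$ and $f_s\mid\operatorname{lcm}(f_u,f_w)$. Your exclusion of $w$ via minimality is in fact more explicit than the paper's.

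One tightening is needed in Step 2. The exclusion $t,t'\notin\{p,q\}$ needs no hedge about common neighbourhoods. If $x$ belongs to the edge $\{p,q\}$, then $x$ divides none of $f_u,f_v,f_w$. So $f_p$ and $f_q$ cannot divide either LCM, with no further conditions. Your phrase ``a subset containing $p,q$'' would also allow the subset of $x$ to contain $u$, and then this exclusion fails. So stick to the edges in the statement.

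The genuine gap is the ending. You describe $\gcd(\operatorname{lcm}(f_u,f_v),\operatorname{lcm}(f_u,f_w))$ as ``$f_u$ times coprime parts'' and then defer to a case analysis in Step 3. Its claims (``should make $s$ adjacent to $u$, $v$ and $w$'', ``should produce either a vertex adjacent to all others, or an edge\dots'') are never verified, so as written nothing is proved.

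The missing observation is that this gcd is exactly $f_u$. Exponentwise,
$\min(\max(\alpha_u,\alpha_v),\max(\alpha_u,\alpha_w))=\max(\alpha_u,\min(\alpha_v,\alpha_w))=\alpha_u$, because $\gcd(f_v,f_w)=1$ forces $\min(\alpha_v,\alpha_w)=0$. Hence $f_s\mid f_u$ with $s\neq u$, which immediately contradicts minimality of $\boldsymbol{\mathit{f}}$. This is exactly how the paper concludes. Step 1, the appeals to \cite[Lemma 6.12]{embdef} and \cite[Lemma 6.9]{embdef}, and the whole configuration enumeration are unnecessary.
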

\begin{proof}
Say without loss of generality that we have dense edges connecting 1 to 2 and 3, and for contradiction that our monomial ideal has an variable corresponding to the edge connecting 4 and 5. Then by Corollary \ref{xecor}, our sixth monomial must be a factor of
\[\operatorname{gcd}\{\operatorname{lcm}\{f_{\underline{1}},f_{\underline{2}}\},\operatorname{lcm}\{f_{\underline{1}},f_{\underline{3}}\}\}.\]
However, if $f_{\underline{2}}$ and $f_{\underline{3}}$ are coprime, this is simply $f_{\underline{1}}$, so $f_{\underline{6}}$ divides $f_{\underline{1}}$, contradicting minimality.
\end{proof}

We let \texttt{myCliques} refer to the set of cliques whose corresponding variables are not forbidden by Corollary \ref{xecor} or Lemma \ref{231}.

We may now describe how our test file verifies Theorem \ref{code2}. Let \texttt{myMatrix}, over $\mathbb{Q}$, take each clique in \texttt{myCliques} to its corresponding zero-one $n$-vector. The preimage of the 1-space $\left\langle\sum_{i=1}^n\mathbf{e}_i\right\rangle$ contains all formal linear sums of cliques corresponding to equigenerated monomial ideals. Let \texttt{R} (whence our useful \textit{rays} originate) be the intersection of this preimage with the space of formal linear sums with non-negative coefficients, and compute the primitive extreme rays of the resulting cone (those consisting of non-negative integers with no common factors; Macaulay2 returns these rays by default). Iteratively collect into \texttt{RAll} all sums of subsets of these rays which are not non-zero on a collection of cliques such that containing variables for each would imply full support by Corollary \ref{xecor} or Lemma \ref{231}. By Lemma \ref{227} and the fact that the cohomological support variety of a monomial ideal given by such a vector is determined by which entries are zero and which are non-zero, every non-full support of a cohomological support variety of an equigenerated monomial ideal with 6 generators must be given by one of these vectors (up to order of course, due to our choices of permutations of graphs). Use \texttt{equigeneratedMonomialCSV} to compute the cohomological support varieties of the ideals corresponding to these vectors whenever the generating sets provided are minimal. Note that, thanks to our special attention to minimality in the above lemmas, this restriction to minimal generating sets does not remove any cohomological support varieties from our result which we set out to include.

\section{Future work}\label{bgpfuture}
This work raises two natural questions.
\begin{question}\label{q2}
Can this strategy be leveraged to classify all monomial ideals with $n=6$ generators, either manually or with computer assistance? What about for other fixed $n>6$?
\end{question}
\begin{question}
Do all edge ideals of cycles in $4m+2$ variables have cohomological support variety
\[\mathcal{V}(a_1a_3\cdots a_{4m+1}+a_2a_4\cdots a_{4m+2})?\]
\end{question}

The first question is much more difficult. That said, our treatment here does not address the strategies used in \cite{embdef} to help our classification by Taylor graphs in more than identifying those with full support, and it would be interesting to see whether using these two strategies in tandem is sufficient to answer it. Furthermore, a Macaulay2 method to perform our strategy for equigenerated monomial ideals has been implemented, but one to do so for non-equigenerated monomial ideals whose Taylor subcomplex graphs admit weak gradings has not. Expansion of and experimentation with this method would be very useful in directing further inquiry. However, the second question, beyond what is known from Section \ref{inclusionsection}, progress is scant.

\printbibliography
\end{document}